\documentclass[reqno]{amsart}
\usepackage{amsmath,amsbsy,amsfonts}
\usepackage{color}

\setlength{\textwidth}{5.0in}
\setlength{\textheight}{7.5in}
\usepackage[utf8]{inputenc}
\usepackage{graphicx}
\usepackage{amsmath,latexsym,amssymb}
\newtheorem{theorem}{Theorem}
\newtheorem{proposition}{Proposition}[section]

\newtheorem{definition}[proposition]{Definition}
\newtheorem{lemma}[proposition]{Lemma}
\newtheorem{remark}[proposition]{Remark}

\newcommand{\dd}{\mathrm{d}}

\numberwithin{equation}{section}
\makeatletter\makeatother

\pretolerance=10000
\usepackage{tikz}

\usepackage{color}
\numberwithin{equation}{section}

\def\R{\mathbb{R}}

\title[Critical fractional  elliptic equations with exponential growth ]{Critical fractional  elliptic equations with exponential growth without Ambrosetti-Rabinowitz type condition}

\author[H. P. Bueno]{Hamilton P. Bueno}\thanks{First author takes part in the project 422806/2018-8 by CNPq/Brazil}
\author[E.Huerto Caqui]{Eduardo Huerto Caqui}\thanks{Second author was supported by CAPES/Brazil}
\author[O. H. Miyagaki]{Olimpio H. Miyagaki } \thanks{Third author was supported by Grant 2019/24901-3 by S\~ao Paulo Research Foundation (FAPESP) and Grant 307061/2018-3 by CNPq/Brazil.} 

\address[H. P. Bueno]{ Department of Matematics, Universidade Federal de  Minas Gerais, 31270-901 - Belo Horizonte - MG, Brazil}
	\email{hamilton@mat.ufmg.br}
\address[E. Huerto Caqui]{ Department of Matematics, Universidade Federal de  Minas Gerais, 31270-901 - Belo Horizonte - MG, Brazil}
	\email{analisis\_11@hotmail.com}
\address[O.H. Miyagaki]{
Department of Mathematics, 
 Universidade Federal de S\~ao Carlos, 13565-905, S\~ao Carlos-SP, Brazil}
\email{olimpio@ufscar.br, ohmiyagaki@gmail.com}

\subjclass[2010]{35A15, 35R11, 35J60, 35J62,35B33}

\keywords{Variational methods; fractional p-Laplacian; nonlinear elliptic equations; critical and subcritical exponential growth in Trudinger-Moser sense}

\begin{document}

\begin{abstract}
 
In this paper we establish, using variational methods combined with the Moser-Trudinger inequality, existence and multiplicity of weak solutions for a class of critical fractional elliptic equations with exponential growth without a Ambrosetti-Rabinowitz-type condition. The interaction of the  nonlinearities with the spectrum of the fractional  operator will used to study the existence and multiplicity of solutions. The main technical result proves that a local minimum in $C_{s}^0(\overline{\Omega})$ is also a local minimum in $W^{s,p}_0$ for nonlinearities with exponential growth.
\end{abstract}

\maketitle

\section{Introduction}

In this paper we consider existence and multiplicity of solutions to the Dirichlet problem 
\begin{align}\label{principal2}
 \left\{
\begin{array}{rlll}
(-\Delta)^{s}_{p} u&=& -\lambda\vert u\vert^{q-2}u+a\vert u\vert^{p-2}u+f(u) &\textrm{in}\ \Omega,\\
u&=&0 &\textrm{in}\ \mathbb{R}^N\setminus\Omega,
\end{array}
\right.
\end{align}
where $ (-\Delta)^{s}_{p} $ is the fractional $p$-Laplacian, $\Omega\subset\mathbb{R}^{N}$ is a bounded smooth domain, $\lambda>0$ and $a\in\mathbb{R}$ are parameters, $N=sp,$ and $0<s<1<q<2\leq p$. Here
$$(-\Delta)^{s}_pu(x)=2\lim_{\epsilon\to 0}\int_{\R^N\setminus B(x,\epsilon)}\frac{\vert u(x)-u(y)\vert^{p-2}(u(x)-u(y))}{\vert x-y\vert^{N+sp}}\dd y,$$ 
where  $u$ is a measurable function and $x\in\R^{N}.$

We suppose that the nonlinearity $f$ has exponential growth, both critical and subcritical in the Trudinger-Moser sense. 

Furthermore, we consider the particular case  
$sp=N=1$, $p=2$ and $s=1/2$, that is, we study the problem  
\begin{align}\label{C1M_1}
\left\{
\begin{array}{rlll}
(-\Delta)^{1/2} u&=&-\lambda\vert u\vert^{q-2}u+au+ f(u) &\textrm{in}\;\;(0,1),\\
u&=&0 &\textrm{in}\;\;\mathbb{R}\setminus(0,1).
\end{array}
\right.
\end{align}

Recently, non-local problems  have been extensively studied in the literature and have attracted the attention of many mathematicians from different fields of research. This type of non-local problems appears in the description of various phenomena in the applied sciences, such as optimization, finance, phase transitions,  material science and water waves, image processing, etc. See the excellent book by Caffarelli on this subject \cite{Caffarelli}, but also an elementary introduction to this topic by Di Nezza, Palatucci and Valdinoci \cite {dinezza}.

Em 1994,  Ambrosetti, Brezis and  Cerami \cite{Amb} established existence and multiplicity of solution for a local  problem involving concave-convex nonlinearities and Sobolev critical exponent, namely, $2^*=\frac{2N}{N-2}(N \geq 3).$

After this work, there has been a growing interest in the study of multiplicity of solutions for local problems of the type $$-\Delta u=\mu\vert u\vert^{q-2}u+g(u)\quad \mbox{in}\quad\Omega,$$ when  $g$ is asymptotically linear and asymmetric, that is, $g$ satisfies the  Ambrosetti-Prodi-type condition  given by (see \cite{deFiguei})
$g_-=\displaystyle\lim_{t\to-\infty}\frac{g(t)}{t}<\lambda_k<g_+=\displaystyle\lim_{t\to+\infty}\frac{g(t)}{t},$  where $\{\lambda_{k}\}_{k\geq 1}$ denotes the sequence of eigenvalues of $(-\Delta)$ considered in  $H_0^1(\Omega).$  In Chabrowsky and Yang \cite{cha} a problem with Neumann boundary condition was considered, while in Motreanu, Motreanu and Papageorgiou \cite{mont} the authors study a problem involving a local $p$-Laplacian. In  \cite{paiva}, de Paiva and Massa studied the local problem
\begin{equation}\label{Diric} 
\left\{\begin{array}{rcll}
-\Delta u&=&-\lambda\vert u\vert^{q-2}u+au +g(u)&\text{in }\ \Omega,\\
u&=&0&\text{on }\ \partial\Omega,
\end{array}
\right.
\end{equation}
with $1<q<2,$ $\lambda>0,$ $a\in[\lambda_{k},\lambda_{k+1}),$ and the nonlinearity $g$ satisfying subcritical polynomial growth at infinity, among other conditions, while the critical case was considered in de Paiva and Presoto \cite{paiva1}, where three solutions  for problem \eqref{Diric} were obtained: a positive, a negative and a sign changing solution. Problem \eqref{Diric} with critical polynomial growth was handled by Miyagaki, Motreanu and Pereira \cite{O} for the fractional Laplacian operator. To complete our references, we would like to cite some papers. For instance,  \cite{Ambro, Amb, Colorado,Perera} for concave problems,  \cite{Autuori,barrios,barrio,brandle} for problems involving the fractional Laplacian and, for the fractional $p$-Laplacian, we cite \cite{bartolo,HUXIA, HUANG,mosconi,Che}. See also references therein.

With respect to nonlinearities with exponential growth for a problem like \eqref{principal2}, in the limit case $N=sp$, Bahrouni \cite{ANOUR} proved a version of the Trudinger-Moser inequality for fractional spaces, which was improved by Takahashi \cite{TAKAHASHI}, who obtained, among other things, optimality of the upper bound. For local elliptic problems with exponential growth  nonlinearity we would like to cite, e.g., \cite{de Figueiredo,deFigueiredo,Yan} and  references therein. 

The pioneering paper for fractional Laplacian, by Iannizzotto and Squassina  \cite{ianni} considered a nonlinearity with exponential growth, but it was proved by de Figueiredo, Miyagaki and Ruf \cite[p.142]{deFigueiredo} that the Ambrosetti-Rabinowitz (AR) condition was satisfied in \cite{ianni}. Namely, the (AR) condition is fulfilled if there exist $\mu>p$ and $R>0$ such that $$0<\mu F(t)\leq f(t)t,\;\mbox{ for all }\,  \vert t\vert\geq R,\;\mbox{ where} \,F(t)=\displaystyle\int_{0}^tf(s)\dd s \leqno{(AR)}$$ 
and in this situation, 
\begin{equation*}
\displaystyle{\lim_{\vert t\vert\to+\infty}}\frac{F(t)}{\vert t\vert^p}=+\infty
\end{equation*}
follows immediately from (AR). The main role of (AR)  is to guarantee that Palais-Smale sequences are bounded. Many authors have been working to drop this condition for problem with polynomial growth, e.g., \cite{Costa, Jeanjean,Liu, Li,S.B.,Schechter} and references therein. For exponential growth without the (AR) condition  we cite, for instance,  \cite{NGYEN,NGYE}. 
Recently,  Pei \cite{Ruichang} proved a existence result for a superlinear $p$-fractional problem with exponential growth.
 
Motivated by \cite{Ruichang} and \cite{paiva1},
in this work we obtain results of existence and multiplicity of solutions for \eqref{principal2}.

We look for solutions to  \eqref{principal2} in the Sobolev space
 $$W^{s,p}(\mathbb{R}^{N}):=\left\lbrace u\in L^p(\mathbb{R}^{N}) : \int_{\mathbb{R}^{2N}}\frac{\vert u(x)-u(y)\vert^p}{|x-y|^{N+sp}}\dd x\dd y<\infty\right\rbrace.$$
Since solutions must be equal $0$ outside $\Omega$, it is natural to consider $X_p^s\subset W^{s,p}(\mathbb{R}^{N})$ given by
 $$X_p^s=\left\{u\in W^{s,p}(\mathbb{R}^{N}) : u = 0 \textrm{ on } \mathbb{R}^N\setminus\Omega\right\}.$$
We denote by $\|\cdot\|_{X_p^s}$ the norm in $X_p^s$ (see Section \ref{prel}) and
\begin{equation*}
\lambda^{*}=\inf\left\{\|u\|_{X_p^s}^{p}\,:\,u\in W, \ \|u\|_{L^{p}(\Omega)}^{p}=1\right\},
\end{equation*}
where 
\begin{equation*}
W=\bigg\{ u\in X_p^s\; :\; \langle A(\varphi_1),u\rangle=0\bigg\},
\end{equation*} with $A:X_p^s\to (X_{p}^{s})^*$ defined, for all $u,v\in X_p^s$, by
\begin{equation}\label{operador}
\langle A(u),v\rangle=\displaystyle{\int_{\mathbb{R}^{2N}}\frac{\vert u(x)-u(y)\vert^{p-2}(u(x)-u(y))(v(x)-v(y))}{\vert x-y\vert^{N+sp}}}\dd x\dd y.
\end{equation}

We will prove that $X_p^s=W\oplus\textrm{span}\{\varphi_{1}\}$ if $\lambda_1<\lambda^{*}$, where $\varphi_{1}$ is the (positive, $L^p$-normalized) autofunction associated with  the first eigenvalue $$\lambda_1=\inf\left\{ [u]^p_{ W^{s,p}(\mathbb{R}^N)}\,:\, u\in X_p^s, \ \|u\|_{L^p(\Omega)}=1\right\}$$ of $(-\Delta)_p^s$ in the space $X_p^s$, see Section \ref{prel} for notation.

To cope with nonlinearities involving exponential growth, the main tool is the following inequality, known as  ``Moser-Trudinger inequality" in the literature.  We will make use of the following version of the Moser-Trudinger inequality, based on \cite[Lema 2.5]{ANOUR}.
\begin{proposition}\label{C2P_4.4}
Suppose that $0<s<1$, $p\geq 2$ and $N=sp$. Then there exists $\alpha_{s,N}^{*}=\alpha(s,N)$ such that, for all $0\leq\alpha<\alpha_{s,N}^{*}$, 
\begin{equation*}
\int_{\Omega}\exp\left(\alpha\vert u\vert^{\frac{N}{N-s}}\right)\dd x\leq H_{\alpha},
\end{equation*}
for all $u\in X_p^s$ such that  $\|u\|_{X_p^s}\leq 1$, where $H_\alpha>0$ is a constant.
\end{proposition}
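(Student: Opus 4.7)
The strategy is to reduce the claim to Bahrouni's fractional Moser--Trudinger inequality (Lemma 2.5 of \cite{ANOUR}) by reconciling the norm used on $X_p^s$ with the Gagliardo seminorm on $\mathbb{R}^N$, and then by matching the critical exponent.

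First, I would observe that under the assumption $N=sp$ the exponent appearing in the statement simplifies to
$$\frac{N}{N-s}=\frac{sp}{sp-s}=\frac{p}{p-1},$$
which is precisely the exponent arising in the fractional Moser--Trudinger inequality for $W^{s,p}$ in the borderline case $sp=N$. Second, since any $u\in X_p^s$ vanishes on $\mathbb{R}^N\setminus\Omega$, the full-space Gagliardo seminorm coincides with the intrinsic norm, namely
$$[u]_{W^{s,p}(\mathbb{R}^N)}^{p}=\int_{\mathbb{R}^{2N}}\frac{|u(x)-u(y)|^{p}}{|x-y|^{N+sp}}\,\dd x\,\dd y=\|u\|_{X_p^s}^{p},$$
so that the hypothesis $\|u\|_{X_p^s}\leq 1$ coincides with the hypothesis $[u]_{W^{s,p}(\mathbb{R}^N)}\leq 1$ required by Bahrouni's result.

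With the norm and the exponent identified, I would apply Lemma 2.5 of \cite{ANOUR} directly: it produces a threshold $\alpha_{s,N}^{*}>0$ such that for every $\alpha\in[0,\alpha_{s,N}^{*})$ the integral
$$\int_{\Omega}\exp\!\bigl(\alpha|u|^{p/(p-1)}\bigr)\,\dd x$$
is bounded, uniformly in $u$ satisfying $\|u\|_{X_p^s}\leq 1$ and $u\equiv 0$ on $\mathbb{R}^N\setminus\Omega$, by a constant depending only on $\alpha$, $s$, $N$ and $|\Omega|$. Calling that bound $H_\alpha$ finishes the proof.

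The only delicate point is a bookkeeping one: verifying that the normalization of the seminorm $[\,\cdot\,]_{W^{s,p}(\mathbb{R}^N)}$ in \cite{ANOUR} agrees with ours, so that the threshold $\alpha_{s,N}^{*}$ is quoted with the correct multiplicative constant (one may also invoke \cite{TAKAHASHI} for an optimal value of this constant). No independent proof of the sharp threshold is needed here, since the applications later in the paper only require the existence of \emph{some} positive $\alpha_{s,N}^{*}$ for which the bound holds; the analytic core of the argument---a dyadic decomposition combined with the Taylor expansion of the exponential and fractional Sobolev embeddings---can be imported verbatim from \cite{ANOUR}.
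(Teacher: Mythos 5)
Your proposal is correct and matches the paper's treatment: the paper states this proposition without proof, attributing it directly to Lemma 2.5 of \cite{ANOUR}, which is exactly the reduction you carry out (including the identification $\frac{N}{N-s}=\frac{p}{p-1}$ under $N=sp$ and the agreement of $\|\cdot\|_{X_p^s}$ with the Gagliardo seminorm for functions vanishing outside $\Omega$). Your added care about the normalization of the seminorm and the observation that only the existence of some positive threshold is needed are sensible but do not change the argument.
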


An adequate version of Proposition \ref{C2P_4.4} in the special case $p=2$, $s=1/2$ and $N=1$ is given in the sequence (see \cite[Teorema 1]{TAKAHASHI} and \cite[Proposição 1.1]{martinazzi}).
\begin{proposition}
There exists $K>0$ so that
$$\sup_{u\in X, \|u\|_X\leq 1}\int_{0}^1\exp\left(\alpha\vert u\vert^{2}\right)\dd x\leq K,\;\textrm{ for all }\,\alpha\leq\pi,$$
where $X$ stands for $X^{1/2}_2$ and $\|\cdot\|_X$ denotes its norm.

The inequality is optimal if $\alpha>\pi$, since the left-hand side of the inequality is then equal to $\infty$. 
\end{proposition}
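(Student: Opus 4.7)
The strategy I would follow is to pass from the fractional problem on $(0,1)$ to a local problem on a half-plane via the Caffarelli-Silvestre harmonic extension. For $u \in X = X_2^{1/2}$, let $w$ denote its unique finite-energy harmonic extension to the upper half-plane $\mathbb{R}^2_+ = \mathbb{R}\times(0,\infty)$. A standard Fourier computation yields, up to normalization,
$$\|u\|_X^{2} = \frac{1}{\pi}\int_{\mathbb{R}^2_+} |\nabla w(x,y)|^{2}\,\dd x\,\dd y.$$
Since $u$ vanishes outside $(0,1)$, the trace of $w$ on the boundary line is supported in $[0,1]$, so the statement reduces to a boundary Moser-Trudinger inequality for $H^{1}(\mathbb{R}^2_+)$ functions with trace supported in a bounded interval.

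Once in the half-plane setting, I would invoke the sharp boundary-trace Moser-Trudinger inequality (in the spirit of Chang-Yang and Ogawa-Ozawa): there exists $K_{0}>0$ such that, for every $w\in H^{1}(\mathbb{R}^2_+)$ with $\int_{\mathbb{R}^2_+}|\nabla w|^{2}\leq 1$ and trace supported in a bounded interval $I$,
$$\int_{I} \exp\bigl(\pi\, |w(x,0)|^{2}\bigr)\,\dd x \leq K_{0}(|I|).$$
Combined with the extension identity and the hypothesis $\|u\|_{X}\leq 1$, this yields the bound at the endpoint $\alpha=\pi$, hence \emph{a fortiori} for every $\alpha\leq\pi$. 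The boundary inequality itself is obtained via Schwarz symmetrization in the horizontal variable (reducing to even, decreasing traces), a dyadic decomposition in the vertical variable $y$, and Adams-type exponential estimates on each dyadic slab.

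For the optimality when $\alpha>\pi$, I would construct the fractional analogue of Moser's sequence, concentrated at an interior point $x_{0}\in(0,1)$:
$$u_{n}(x) = \frac{1}{\sqrt{\pi}} \min\!\left\{\sqrt{\log n},\; \frac{\log\bigl(r/|x-x_{0}|\bigr)}{\sqrt{\log n}}\right\},$$
suitably truncated to vanish outside a small fixed interval around $x_{0}$, with $r>0$ small. A direct calculation of the extension energy gives $\|u_{n}\|_{X}^{2} = 1 + o(1)$, while $u_{n}\equiv\sqrt{(\log n)/\pi}$ on a set of measure $\sim 1/n$, so, after renormalizing so that the norm is exactly one,
$$\int_{0}^{1} \exp(\alpha\, u_{n}^{2})\,\dd x \;\gtrsim\; \frac{1}{n}\cdot n^{\alpha/\pi} \xrightarrow[n\to\infty]{}\infty$$
for any $\alpha>\pi$, proving sharpness.

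The main obstacle, in my view, is pinning down the sharp constant $\pi$: the upper bound requires carefully tracking the normalization factor produced by the Caffarelli-Silvestre extension against the classical interior Moser-Trudinger constant $4\pi$ in the plane (accounting for the factor $1/2$ from working in the half-plane and the factor coming from the trace), while the matching lower bound demands an explicit asymptotic expansion of the $X$-norm of the test sequence in which both the leading order and the first correction must be computed exactly.
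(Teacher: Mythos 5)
The paper does not actually prove this proposition: it is quoted directly from Takahashi [Teorema 1] and Martinazzi [Proposição 1.1], so there is no internal argument to measure your attempt against. Your strategy --- Caffarelli--Silvestre harmonic lifting to $\mathbb{R}^2_+$, a sharp boundary-trace Moser--Trudinger inequality of Lebedev--Milin/Chang--Yang type, and a concentrating Moser sequence for optimality --- is a recognized route to results of this kind, and its architecture is sound.

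However, there is a concrete gap at exactly the point you flag as ``the main obstacle'', and it is left unresolved. The identity relating the paper's norm to the half-plane Dirichlet energy is misstated: with $\langle u,v\rangle_X=\int_{\mathbb{R}^2}\frac{(u(x)-u(y))(v(x)-v(y))}{|x-y|^2}\,\dd x\,\dd y$ a Plancherel computation gives
$$\|u\|_X^2=2\pi\int_{\mathbb{R}}|\xi|\,|\hat u(\xi)|^2\,\dd\xi=2\pi\int_{\mathbb{R}^2_+}|\nabla w|^2\,\dd x\,\dd y,$$
not $\frac{1}{\pi}\int_{\mathbb{R}^2_+}|\nabla w|^2$. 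Since the entire content of the statement is the precise threshold $\pi$, this is not cosmetic: inserting your factor into your own chain of estimates yields the exponent $1$, while the correct factor $2\pi$ yields $2\pi^2$ --- in neither case $\pi$. (The constant $\pi$ in Takahashi and Martinazzi is sharp relative to the normalization $\|(-\Delta)^{1/4}u\|_{L^2}\le 1$; with the Gagliardo seminorm the inequality for $\alpha\le\pi$ is true \emph{a fortiori}, but the optimality claim at $\alpha=\pi$ does not survive the change of normalization, a discrepancy any genuine proof must confront.) The same unresolved factor undermines the asserted expansion $\|u_n\|_X^2=1+o(1)$ for your Moser sequence, which is stated but never computed. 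Finally, the sharp boundary-trace inequality you invoke is itself a theorem of comparable depth, so even with the constants repaired the argument is a reduction to known results rather than a self-contained proof --- which, in fairness, is precisely what the paper itself does by citation.
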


Considering \eqref{principal2} in the case of subcritical exponential growth in the Trudinger-Moser sense, we suppose that $f$ satisfies
\begin{enumerate}
\item [$(f_{1,p})$] $f\in C(\mathbb{R},\mathbb{R})$, $f(0)=0$ and $F(t)\geq 0$ for all $t\in\mathbb{R}$, where  $F(t)=\displaystyle{\int_0^t}f(s)\dd s$;
 \item [$(f_{2,p})$] $\displaystyle\lim_{\vert t\vert\to \infty}\frac{\vert f(t)\vert}{\exp(\alpha\vert t\vert^{\frac{N}{N-s}})}=0$, for all $\alpha>0$;
\item [$(f_{3,p})$] $\displaystyle\lim_{\vert t\vert\to 0}\frac{  f(t)}{\vert t\vert^{p-2}t}=0$;
\item [$(f_{4,p})$] $\displaystyle\lim_{\vert t\vert\to \infty}\frac{ F(t)}{\vert t\vert^p}=+\infty$. 
\end{enumerate}

In the case of a critical exponential growth, we change $(f_{2,p})$ for
\begin{enumerate}
\item [$(f'_{2,p})$] there exists $\alpha_0>0$ such that 
 $$\displaystyle\lim_{\vert t\vert\to \infty}\frac{ \vert f(t)\vert}{\exp(\alpha \vert t\vert^{\frac{N}{N-s}})}= \left\{ \begin{array}{rc}
\infty, &\quad\;\textrm{if}\quad 0<\alpha<\alpha_0 \\
0, &\textrm{if}\quad \alpha>\alpha_0.
\end{array} \right.$$
\end{enumerate}
Keeping up with the conditions  $(f_{1,p})$ and $(f_{3,p})$, we suppose additionally that $f$ satisfies
\begin{enumerate}
\item [$(f_{5,p})$]  $\displaystyle\frac{f(t)}{\vert t\vert^{p-2}t}\ \textrm{ is increasing if }\ t>0,\ \textrm{ and decreasing if}\ t< 0$;
\item [$(f_{6,p})$] For all sequence $(u_n)\subset X_p^s$, if
 $$\left\{ \begin{array}{llll}
u_n&\rightharpoonup &u, &\textrm{in}\quad X_p^s, \\
f(u_n)&\to &f(u), &\textrm{in}\quad  L^{1}(\Omega),
\end{array} \right.$$
then $F(u_n)\to F(u)\;$ in $\;L^{1}(\Omega)$;
\item [$(f_{7,p})$] There exist $r>p$ and $C_{r}>0$ such that 
$F(t)\geq \dfrac{C_{r}}{r}\vert t\vert^{r},\;\;\textrm{ for all }\; t\in\mathbb{R},$
verifying
$$C_r>\displaystyle\left[2\frac{N}{s}\left(\frac{\alpha_0}{\alpha_{s,N}^*}\right)^{\frac{N-s}{s}}\frac{(r-p)}{pr}\right]^{      \frac{r-p}{p}}\frac{1}{C},$$
where  $\alpha_{s,N}^*$  is the constant given in Proposition  \ref{C2P_4.4} and $$C=\inf_{u\in \mathbb{F} }\frac{\|u\|_{L^r}}{\|u\|_{X^{s}_{p}}},$$
where $\mathbb{F}=\textrm{span}\{ \varphi_1,\varphi\}$ for $\varphi\in W$. 
\end{enumerate} 
\begin{remark}Condition $(f_{6,p})$ was supposed by  \cite{NGYEN}, \cite{NGYE} and \cite{Ruichang} in the case $u=0$.
Observe that $(f_{7,p})$ implies $(f_{4,p})$.

Hypotheses $(f_{1,p})-(f_{4,p})$ are satisfied by $f(t)=\vert t\vert^{p-2}t\log(1+\vert t\vert)$, a function that does not verify the  $(AR)$ condition. 

On its turn, considering $0<\sigma <1$, the function
$$f(t) =\left\{\begin{array}{ll}
\sigma t^{r-1}+C_{r}t^{r-1},
&\textrm{ if } 0\leq t\displaystyle\leq (p-1)^{\frac{N-s}{N}},\\
t^{\frac{N}{N-s}}\exp\left(t^\frac{N}{N-s}-(p-1)\right)+C_rt^{r-1}\\+\sigma\displaystyle (p-1)^{\frac{N-s}{N}(r-1)}-(p-1)^{\frac{s}{N}}, 
&\textrm{ if } t> (p-1)^{\frac{N-s}{N}}
\end{array}\right.$$
satisfies our hypotheses in the critical growth case, if $f(t)=-f(-t)$, for $t<0$. 
\end{remark}

\begin{theorem}\label{0}
Let $\Phi:X_p^s\to\mathbb{R}$ be the  $C^1(X_p^s,\mathbb{R})$ functional defined by $$\Phi(u)=\dfrac{1}{p}\|u\|_{X_p^s}^p-\displaystyle\int_{\Omega} G(u)\dd x,$$ 
where $G(t)=\displaystyle\int_{0}^{t}g(s)\dd s$. 

Let us suppose that $g$ satisfies $(f_{2,p})$ or $(f'_{2,p})$ and that $0$ is a local minimum of $\Phi$ in $C_{s}^0(\overline{\Omega})$, that is, there exists $r_1>0$ such that
\begin{equation}\label{PH1}
\Phi(0)\leq\Phi(z),\;\forall\;z\in X_p^s\cap C_{s}^0(\overline{\Omega}),\;\|z\|_{0,s}\leq r_1.
\end{equation}
	
Then $0$ is a local minimum of $\Phi$ in $X_p^s$, that is, there exists $r_2>0$ such that $$\Phi(0)\leq\Phi(z),\;\forall\; z\in X_p^s,\;\|z\|_{X_p^s}\leq r_2.$$
\end{theorem}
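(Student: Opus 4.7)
The plan is to proceed by contradiction, following the $C^1$ versus Sobolev local-minimizer strategy of Brezis-Nirenberg, adapted to the fractional $p$-Laplacian setting. Assume $0$ is not a local minimum of $\Phi$ in $X_p^s$. Then there is a sequence $(w_n)\subset X_p^s$ with $\rho_n:=\|w_n\|_{X_p^s}\to 0$ and $\Phi(w_n)<\Phi(0)$. The idea is to localize on the closed balls $\bar B_n:=\{u\in X_p^s:\|u\|_{X_p^s}\leq\rho_n\}$ and replace $w_n$ by a minimizer, which will turn out to be much more regular.

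For each $n$, consider $m_n=\inf\{\Phi(u):u\in\bar B_n\}\leq\Phi(w_n)<\Phi(0)$. Proposition \ref{C2P_4.4} together with $(f_{2,p})$, or with $(f'_{2,p})$ once $\rho_n$ is small enough that $\alpha_0\rho_n^{N/(N-s)}<\alpha_{s,N}^{*}$, implies that $\exp(\alpha|u|^{N/(N-s)})$ is bounded in every $L^q(\Omega)$ uniformly on $\bar B_n$, so $u\mapsto\int_\Omega G(u)\,\dd x$ is sequentially weakly continuous on $\bar B_n$. Then $\Phi$ is weakly lower semicontinuous and $\bar B_n$ is weakly compact, so $m_n$ is attained at some $u_n\in\bar B_n$ with $\Phi(u_n)<\Phi(0)$, and in particular $u_n\to 0$ strongly in $X_p^s$.

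Applying the Lagrange multiplier rule to this constrained minimization produces $\mu_n\geq 0$ with
\[
(1+\mu_n)\langle A(u_n),v\rangle=\int_\Omega g(u_n)v\,\dd x\qquad\forall\,v\in X_p^s,
\]
so $u_n$ satisfies $(-\Delta)^s_p u_n=h_n$ weakly in $\Omega$ with $|h_n|\leq|g(u_n)|$. Exploiting $\|u_n\|_{X_p^s}\to 0$, Proposition \ref{C2P_4.4} combined with $(f_{2,p})$ (respectively $(f'_{2,p})$) yields that $h_n$ is bounded in $L^q(\Omega)$ for every $q\in[1,\infty)$. A nonlocal Moser iteration then upgrades this into a uniform $L^\infty$ bound with $\|u_n\|_{L^\infty}\to 0$, and the global $C^{0,s}$-regularity up to $\partial\Omega$ for the fractional $p$-Laplacian finally gives $u_n\in C_{s}^0(\overline\Omega)$ with $\|u_n\|_{0,s}\to 0$. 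For $n$ large, $\|u_n\|_{0,s}\leq r_1$, so hypothesis \eqref{PH1} forces $\Phi(u_n)\geq\Phi(0)$, contradicting $\Phi(u_n)<\Phi(0)$.

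The main obstacle is securing the uniform $L^\infty$ estimates that decay to $0$: because the nonlinearity grows exponentially, one must keep the effective Trudinger-Moser parameter strictly below $\alpha_{s,N}^{*}$, and this is only possible by using the smallness of $\|u_n\|_{X_p^s}$ in an essential way at the start of the iteration. The subsequent step from $L^\infty$ to $C_{s}^0(\overline\Omega)$ depends on the delicate global Hölder regularity theory for $(-\Delta)^s_p$, which is considerably more involved than in the linear case $p=2$ and is the technical core the theorem is designed to harness.
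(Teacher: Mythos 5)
Your proposal follows essentially the same route as the paper: argue by contradiction, minimize $\Phi$ on small balls $\bar B_{\rho}$ using weak lower semicontinuity, obtain via Lagrange multipliers that the minimizers solve $(-\Delta)^s_p u=\tfrac{1}{1+\mu}g(u)$ with $\mu\geq 0$, derive a uniform $L^\infty$ bound by a truncation/iteration argument exploiting the Trudinger--Moser control of $g(u)$ for $\|u\|_{X_p^s}\leq 1$, and then invoke the global Hölder regularity and the $\delta^s$-barrier estimate of Iannizzotto--Mosconi--Squassina to conclude convergence to $0$ in $C^0_s(\overline\Omega)$, contradicting \eqref{PH1}. This matches the paper's proof (which implements the $L^\infty$ step as Lemma \ref{C2P_23.1} via the truncations $T_k$ and passes through Arzel\`a--Ascoli before applying Proposition \ref{a.4}), so the approach is correct and not genuinely different.
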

\noindent (See definition of $C_{s}^0(\overline{\Omega})$ in Section \ref{M}.)

Theorem \ref{0} will play an essential role to obtain the next results.
\begin{theorem}[subcritical case]\label{3}
If $\lambda_1 \leq a<\lambda^{*}$ and if $f$ satisfies conditions $(f_{1,p})-(f_{4,p})$ then, for $\lambda$ small enough, problem \eqref{principal2} has at least three nontrivial solutions. Additionally, if $f$ is odd, then \eqref{principal2} has infinitely many solutions.
\end{theorem}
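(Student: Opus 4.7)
The plan is to study the associated energy functional
$$J_\lambda(u)=\frac{1}{p}\|u\|_{X_p^s}^p+\frac{\lambda}{q}\int_\Omega|u|^q\,\dd x-\frac{a}{p}\int_\Omega|u|^p\,\dd x-\int_\Omega F(u)\,\dd x,$$
whose critical points are precisely the weak solutions of \eqref{principal2}. My first step is to exhibit $0$ as a local minimum of $J_\lambda$ in the topology of $C_s^0(\overline\Omega)$: condition $(f_{3,p})$ yields $F(u)=o(|u|^p)$ near zero, while the elementary bound $\int_\Omega|u|^p\le\|u\|_\infty^{p-q}\int_\Omega|u|^q$ shows that, for $u$ small in $L^\infty$, the positive concave contribution $\frac{\lambda}{q}\|u\|_q^q$ dominates both $\frac{a}{p}\|u\|_p^p$ and $\int F(u)$, forcing $J_\lambda(u)\ge 0=J_\lambda(0)$. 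Theorem~\ref{0} then promotes this into a local minimum in $X_p^s$, providing a radius $\rho>0$ on whose sphere $J_\lambda$ stays above $J_\lambda(0)$.

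Next I will establish the mountain-pass geometry. Since $\|\varphi_1\|_{X_p^s}^p=\lambda_1$ and $\|\varphi_1\|_p=1$,
$$J_\lambda(t\varphi_1)=\frac{\lambda_1-a}{p}t^p+\frac{\lambda t^q}{q}\|\varphi_1\|_q^q-\int_\Omega F(t\varphi_1)\,\dd x,$$
which by $(f_{4,p})$ diverges to $-\infty$ along $\pm\varphi_1$. The delicate issue is the Cerami condition for $J_\lambda$ without $(AR)$: for a Cerami sequence $(u_n)$ I would decompose $u_n=t_n\varphi_1+w_n\in\textrm{span}\{\varphi_1\}\oplus W$ and use $a<\lambda^*$ together with the Cerami inequality to secure coercive control of $w_n$; a divergent $|t_n|\to\infty$ is then ruled out by normalizing $v_n=u_n/\|u_n\|_{X_p^s}$ and matching the identity $\langle J_\lambda'(u_n),u_n\rangle=o(1)$ against $J_\lambda(u_n)=O(1)$, invoking $(f_{4,p})$ to produce a contradiction through the superquadratic growth of $F$. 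Once $(u_n)$ is bounded, the subcritical growth $(f_{2,p})$ together with Proposition~\ref{C2P_4.4} applied to $v_n$ gives uniform integrability of $f(u_n)u_n$ and $F(u_n)$, and a standard $(S_+)$-argument for the operator $A$ of \eqref{operador} upgrades weak to strong convergence in $X_p^s$.

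The three nontrivial solutions are then obtained by truncation combined with a sign-changing link. Setting $f^{\pm}(t)=f(t)\chi_{\pm t\ge 0}$, I truncate both the nonlinearity and the concave term to form functionals $J_\lambda^\pm$; each inherits the local minimum at $0$, the mountain-pass geometry along $\pm\varphi_1$, and the Cerami property, so the mountain pass theorem produces critical points $u_\pm\not\equiv 0$. Testing the corresponding equations against $u_\pm^{\mp}$ and invoking a fractional Kato-type sign argument for $(-\Delta)^s_p$ forces $u_+\ge 0$ and $u_-\le 0$. A third, sign-changing, solution $u_s$ is then sought by minimaxing the untruncated $J_\lambda$ over continuous paths joining $-M\varphi_1$ to $M\varphi_1$ whose interior portions remain in $\{u:u^+\not\equiv 0,\ u^-\not\equiv 0\}$; the smallness of $\lambda$ is used to push this minimax level strictly above $\max\{J_\lambda(u_+),J_\lambda(u_-)\}$, ensuring $u_s$ is distinct from $u_\pm$.

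For the multiplicity statement, if $f$ is odd then $J_\lambda$ is even with $J_\lambda(0)=0$. On any $n$-dimensional subspace $E_n\subset X_p^s$ all norms are equivalent, so $(f_{4,p})$ forces $J_\lambda\to-\infty$ on $E_n$; combined with the uniform positive lower bound on the small sphere from Step~1, this matches the hypotheses of Rabinowitz's symmetric mountain-pass theorem (via Krasnosel'skii genus), yielding an unbounded sequence of critical values and hence infinitely many solutions. The principal obstacle throughout is the Cerami verification without $(AR)$ under exponential growth: controlling $\int f(u_n)u_n$ requires a careful interplay between $(f_{2,p})$, the normalized sequence $u_n/\|u_n\|_{X_p^s}$, and Proposition~\ref{C2P_4.4}, together with ruling out the degenerate case $v_n\rightharpoonup 0$ via the Cerami identity rewritten as $pJ_\lambda(u_n)-\langle J_\lambda'(u_n),u_n\rangle=O(1)$.
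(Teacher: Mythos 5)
Your construction of the first two solutions tracks the paper closely: truncate to $I_{\lambda,p}^{\pm}$, show $0$ is a local minimum in the $C_s^0(\overline{\Omega})$ topology using $(f_{3,p})$ and the elementary bound $\int_\Omega|u|^p\le C\|u\|_{0,s}^{p-q}\int_\Omega|u|^q$, promote it to $X_p^s$ via Theorem \ref{0}, and run the mountain pass along $t\varphi_1$ using $(f_{4,p})$; the paper obtains bounded Palais--Smale sequences from Lemma \ref{C2_9} and closes compactness with the Moser--Trudinger bound exactly as you sketch. The symmetric mountain pass argument for the odd case is also the paper's.

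The genuine gap is in your third solution. You propose to minimax the untruncated functional over paths from $-M\varphi_1$ to $M\varphi_1$ whose interiors stay in $\{u:u^+\not\equiv0,\ u^-\not\equiv0\}$, but this class of paths is not invariant under the deformations used in minimax theory, so no critical point is produced without substantial extra machinery (invariant sets of the descending flow, or a nodal Nehari set --- the latter needs $(f_{5,p})$, which is not assumed in the subcritical theorem). Moreover your argument never uses the hypothesis $a<\lambda^{*}$, which is precisely what makes the third solution possible: the paper decomposes $X_p^s=W\oplus\mathrm{span}\{\varphi_1\}$ with $W=\{u:\langle A(\varphi_1),u\rangle=0\}$, proves $\lambda_1<\lambda^{*}$, and uses $a<\lambda^{*}$ to get $I_{\lambda,p}\ge\beta>0$ on a sphere in $W$ (Lemma \ref{C2PE_51}), while $\lambda_1\le a$ gives $I_{\lambda,p}\le\eta(\lambda)$ on $\mathrm{span}\{\varphi_1\}$ with $\eta(\lambda)\to0$ (Lemma \ref{C3PE_9}); the Linking Theorem then yields a critical value $C_\lambda\ge\beta$. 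You also have the role of the smallness of $\lambda$ reversed: it is not used to push the new minimax level \emph{up} past $\max\{I(u_+),I(u_-)\}$, but to push the mountain-pass levels of $I_{\lambda,p}^{\pm}$ \emph{down} below $\beta$ (via the path $t\mapsto t\,t_0\varphi_1$, whose energy is at most $\eta(\lambda)<\beta$), which is how the paper distinguishes the linking solution from $u_\pm$ by energy level --- note the paper makes no claim that the third solution changes sign.
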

\begin{theorem}[critical case]\label{4}
If $f$ satisfies conditions $(f_{1,p})$, $(f'_{2,p})$, $(f_{3,p})$ and  $(f_{5,p})-(f_{7,p})$ then, for $\lambda$ small enough, problem \eqref{principal2} has at least three nontrivial solutions in the case $\lambda_1 \leq a<\lambda^{*}$.
\end{theorem}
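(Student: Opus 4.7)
The plan is to produce, along the lines of de Paiva--Presoto \cite{paiva1} adapted to the fractional $p$-Laplacian with critical exponential growth, one positive, one negative and one mountain-pass solution. The energy functional attached to \eqref{principal2} is
$$I(u)=\frac{1}{p}\|u\|_{X_p^s}^{p}+\frac{\lambda}{q}\int_{\Omega}|u|^{q}\,\mathrm{d}x-\frac{a}{p}\int_{\Omega}|u|^{p}\,\mathrm{d}x-\int_{\Omega}F(u)\,\mathrm{d}x,$$
which is $C^{1}(X_p^s,\mathbb{R})$ by $(f'_{2,p})$ and Proposition \ref{C2P_4.4}. Set $g_{+}(t)=-\lambda t^{q-1}+a t^{p-1}+f(t)$ for $t>0$ and $g_{+}(t)=0$ otherwise, with primitive $G_{+}$, and $I_{+}(u)=\frac{1}{p}\|u\|_{X_p^s}^{p}-\int_{\Omega}G_{+}(u)\,\mathrm{d}x$; define $g_{-}$, $G_{-}$, $I_{-}$ symmetrically. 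Critical points of $I_{\pm}$ acquire a fixed sign (by testing with $u^{\mp}$ and invoking the standard fractional monotonicity $\langle A(u),u^{-}\rangle\leq -\|u^{-}\|_{X_p^s}^{p}$) and thus solve \eqref{principal2}.

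First I would show that, for $\lambda$ sufficiently small, $I_{+}$ admits a local minimizer $u_{+}>0$: the concave term $-\lambda t^{q-1}$ with $q<p$ produces $I_{+}(t\varphi_{1})<0$ for $t>0$ small; on the other hand, $a<\lambda^{*}$ together with Proposition \ref{C2P_4.4} furnishes a lower bound for $I_{+}$ on $X_p^s$-balls below the Moser--Trudinger threshold, where the exponential integral is uniformly controlled. Ekeland's principle on a small closed ball then yields $u_{+}\geq 0$, and a fractional strong maximum principle upgrades this to $u_{+}>0$. The construction of $u_{-}<0$ is identical via $I_{-}$.

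Next, the regularity theory for the fractional $p$-Laplacian places $u_{\pm}$ in $C^{0}_{s}(\overline{\Omega})$, and each is a local minimum of $I$ in the $C^{0}_{s}$-topology (a small $C^{0}_{s}$-perturbation preserves the sign, so $I$ agrees there with $I_{\pm}$). Applying Theorem \ref{0} to the shifted functional $\Phi(v)=I(u_{\pm}+v)-I(u_{\pm})$ promotes each $u_{\pm}$ to a strict local minimum of $I$ in the $X_p^s$-topology. I will then invoke the mountain-pass theorem with class
$$\Gamma=\bigl\{\gamma\in C([0,1],X_p^s)\,:\,\gamma(0)=u_{-},\ \gamma(1)=u_{+}\bigr\},\qquad c=\inf_{\gamma\in\Gamma}\max_{t\in[0,1]}I(\gamma(t)),$$
and, with $c$ a critical value satisfying $c>\max\{I(u_{+}),I(u_{-})\}$, the corresponding critical point $u_{0}$ is forced to differ from both $u_{\pm}$ and $0$.

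The main obstacle, and the heart of the argument, is to keep $c$ below the critical Moser--Trudinger level $c^{*}=\frac{s}{N}(\alpha_{s,N}^{*}/\alpha_{0})^{(N-s)/s}$, beneath which Palais--Smale compactness is available. This is precisely what $(f_{7,p})$ is engineered for: restricting competitor paths to the two-dimensional subspace $\mathbb{F}=\mathrm{span}\{\varphi_{1},\varphi\}$ (the second direction $\varphi\in W$ is needed to break the potential resonance when $a=\lambda_{1}$), the bound $F(t)\geq \frac{C_{r}}{r}|t|^{r}$ with the explicit lower bound required of $C_{r}$ and the embedding constant $C=\inf_{\mathbb{F}}\|\cdot\|_{r}/\|\cdot\|_{X_p^s}$ yields $\max_{t>0}I(t\psi)<c^{*}$ for every $\psi$ on the unit $X_p^s$-sphere of $\mathbb{F}$. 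A Palais--Smale analysis below $c^{*}$, using $(f_{5,p})$ to rule out the vanishing scenario via a Nehari-type monotonicity argument and $(f_{6,p})$ to pass to the limit in $\int F(u_{n})$ in the critical regime, then produces the third nontrivial solution. The smallness of $\lambda$ must finally be chosen so as not to spoil this level estimate while still guaranteeing the negative sublevels of $I_{\pm}$ needed in Step~1.
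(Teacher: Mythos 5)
Your plan is built on a misreading of the sign of the concave term, and this breaks Step~1. In \eqref{principal2} the concave nonlinearity enters as $-\lambda|u|^{q-2}u$, so in the energy it appears as $+\frac{\lambda}{q}\int_\Omega|u|^q\,\dd x$. Hence for $t>0$ small one has $I_{+}(t\varphi_1)=\frac{\lambda t^q}{q}\int_\Omega\varphi_1^q\,\dd x+o(t^q)>0$: the concave term makes $u=0$ a \emph{strict local minimum} of $I_{\pm}$ (this is exactly the content of Lemma \ref{C3PE_4} combined with Theorem \ref{0}), not a source of negative values near the origin. Consequently Ekeland's principle on a small ball returns only the trivial solution, and there are no nontrivial local minimizers $u_{\pm}$ to feed into your mountain-pass-between-two-wells scheme; since $(f_{7,p})$ forces $F(t)\geq\frac{C_r}{r}|t|^r$ with $r>p$, the functionals $I_{\pm}$ are unbounded below and a global minimizer is not available either. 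The whole architecture "two minimizers plus one mountain pass" therefore does not fit this problem.

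The paper's route is different: the positive and negative solutions are themselves mountain-pass critical points of $I^{+}_{\lambda,p}$ and $I^{-}_{\lambda,p}$ around the strict local minimum $0$ (with $I^{\pm}_{\lambda,p}(t_0\varphi_1^{\pm})<0$ for $t_0$ large), at levels bounded by $\eta(\lambda)\to 0$ as $\lambda\to 0$ thanks to Lemma \ref{C3PE_9}, which is where $\lambda_1\leq a$ is used; the third solution comes from the Linking Theorem with the decomposition $X_p^s=W\oplus\mathrm{span}\{\varphi_1\}$, where $a<\lambda^{*}$ gives the lower bound $I_{\lambda,p}\geq\beta>0$ on $W\cap\{\|u\|_{X_p^s}=\rho\}$ (Lemma \ref{C2PE_51}), and the three solutions are distinguished by comparing the linking level $C_\lambda\geq\beta$ with the mountain-pass levels $\leq\eta(\lambda)<\beta$. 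Your discussion of the later stages — using $(f_{7,p})$ and the subspace $\mathbb{F}=\mathrm{span}\{\varphi_1,\varphi\}$ to keep the minimax level below $\frac{s}{N}\bigl(\alpha^{*}_{s,N}/\alpha_0\bigr)^{\frac{N-s}{s}}$, and $(f_{5,p})$, $(f_{6,p})$ in the Palais--Smale analysis — is essentially on target, but it cannot rescue the argument until the geometry near $0$ and the mechanism producing the first two solutions are corrected.
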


The version of Theorem \ref{0} for the case $N=1$, $p=2$ and $s=1/2$ is given by (see notation introduced before)
\begin{theorem}\label{C2_24}
Let $\Phi:X\to\mathbb{R}$ the $C^1(X,\mathbb{R})$-functional given by $$\Phi(u)=\dfrac{1}{2}\|u\|^2-\displaystyle\int_0^1 G(u)\dd x,$$ 
where $G(t)=\displaystyle\int_{0}^{t}g(s)\dd s$ and $g$ satisfies $(f_{2,2})$ or $(f'_{2,2})$.

Suppose that $0$  is a local minimum of $\Phi$ in $C_{\delta}^0([0,1])$, that is, there exists $r_1>0$ such that 
\begin{equation*}
\Phi(0)\leq\Phi(z),\;\forall\;z\in X\cap C_{\delta}^0([0,1]),\;\|z\|_{0,\delta}\leq r_1,	\end{equation*}
then $0$ is a local minimum of $\Phi$ in $X$, that is, there exists $r_2>0$ such that $$\Phi(0)\leq\Phi(z),\;\forall\; z\in X,\;\|z\|_{X}\leq r_2.$$
\end{theorem}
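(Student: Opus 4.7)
The plan is to argue by contradiction using a constrained-minimization / Brezis--Nirenberg scheme adapted to the fractional setting. Suppose $0$ is not a local minimum of $\Phi$ in $X$. Then for every $\varepsilon>0$ there exists $u\in X$ with $\|u\|_X\leq\varepsilon$ and $\Phi(u)<0=\Phi(0)$. Pick $\varepsilon_n\downarrow 0$ and consider the constrained problem
\[
m_n:=\inf\bigl\{\Phi(u)\,:\,u\in \overline{B}_{\varepsilon_n}\bigr\},\qquad \overline{B}_{\varepsilon_n}=\{u\in X:\|u\|_X\leq \varepsilon_n\}.
\]
The Moser--Trudinger inequality in Proposition~1.3, together with the exponential growth hypothesis ($(f_{2,2})$ or $(f'_{2,2})$), implies that on bounded subsets of $X$ the functional $\Phi$ is continuous and sequentially weakly lower semicontinuous; since $\overline{B}_{\varepsilon_n}$ is weakly compact, a minimizer $v_n$ exists with $\Phi(v_n)=m_n<0$, hence $v_n\neq 0$ and $\|v_n\|_X\leq \varepsilon_n\to 0$.

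A Lagrange-multiplier argument provides $\mu_n\geq 0$ such that
\[
(1+\mu_n)\langle A(v_n),\varphi\rangle=\int_0^1 g(v_n)\varphi\,\dd x\qquad \forall\,\varphi\in X,
\]
so $v_n$ weakly solves $(-\Delta)^{1/2}v_n=(1+\mu_n)^{-1}g(v_n)$ in $(0,1)$ with $v_n=0$ on $\mathbb{R}\setminus(0,1)$. Because $\|v_n\|_X\to 0$, one can apply Proposition~1.3 to $v_n/\|v_n\|_X$ at an exponent strictly below $\pi$ and combine with the growth of $g$ to show that $g(v_n)$ is bounded in $L^q(0,1)$ for any $q<\infty$, with norm tending to $0$. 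A bootstrap based on boundary regularity for the half-Laplacian on the interval (in the spirit of Iannizzotto--Mosconi--Squassina / Ros-Oton--Serra) then upgrades this into a uniform estimate in the weighted Hölder space $C^{0}_{\delta}([0,1])$ together with the quantitative decay $\|v_n\|_{0,\delta}\to 0$. For $n$ large this yields $v_n\in X\cap C^{0}_{\delta}([0,1])$ with $\|v_n\|_{0,\delta}\leq r_1$, and the $C^{0}_{\delta}$-local-minimum hypothesis gives $\Phi(0)\leq\Phi(v_n)$, contradicting $\Phi(v_n)<0$.

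The main obstacle is the regularity transfer: one must pass from the quantitative smallness $\|v_n\|_X\to 0$ to the qualitative smallness $\|v_n\|_{0,\delta}\to 0$ in the weighted Hölder topology of the interval. The exponential nonlinearity is delicate, since the Moser--Trudinger threshold in Proposition~1.3 must not be crossed along the bootstrap; here this is secured precisely because $\|v_n\|_X\to 0$ keeps the exponent uniformly below $\pi$, so that $g(v_n)$ is bounded in every $L^q$. The second technical step, converting $L^q$-bounds for the source term into Hölder control of $v_n/\delta^{1/2}$ up to the boundary points $0$ and $1$, uses the fact that the fractional Laplacian on $(0,1)$ admits explicit eigenfunctions and Green kernel with known boundary behaviour, which makes the one-dimensional case strictly easier than the general setting of Theorem~\ref{0} and allows one to conclude without further structural assumptions on $g$.
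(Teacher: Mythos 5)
Your overall skeleton coincides with the paper's proof of Theorem \ref{0} (of which Theorem \ref{C2_24} is declared a verbatim analogue): argue by contradiction, minimize $\Phi$ over the closed balls $\overline{B}_{\varepsilon_n}$ using weak lower semicontinuity, extract the Euler--Lagrange equation with a multiplier of the correct sign so that $v_n$ solves $(-\Delta)^{1/2}v_n=(1+\mu_n)^{-1}g(v_n)$, and then transfer the smallness $\|v_n\|_X\to0$ into smallness in $C^0_\delta([0,1])$ to contradict the hypothesis. Where you diverge is in the regularity-transfer step. The paper first proves a uniform $L^\infty$ bound on the $v_n$ by a Stampacchia truncation argument (Lemma \ref{C2P_23.1} and its one-dimensional analogue, where the key inequality $\langle v,T_k(v)\rangle_X\geq\|T_k(v)\|_X^2$ is immediate from the Hilbert structure), deduces an $L^\infty$ bound on the right-hand side from the exponential growth, and only then invokes the global H\"older estimate of Proposition \ref{a.3} and the barrier estimate $|u|\leq(C_\Omega K)^{1/(p-1)}\delta^s$ of Proposition \ref{a.4} to control $\|v_n\|_{0,\delta}$. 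You instead propose to get $L^q$ bounds on $g(v_n)$ for every finite $q$ directly from the Moser--Trudinger inequality applied to $v_n/\|v_n\|_X$ (exploiting that $q\alpha\|v_n\|_X^2<\pi$ eventually), and then bootstrap via the explicit Green kernel of the half-Laplacian on the interval. This is a legitimate and, in the one-dimensional setting, arguably more economical route, since the smallness of $\|v_n\|_X$ replaces the truncation lemma.

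Two points need attention before the argument closes. First, the claim that $\|g(v_n)\|_{L^q}\to0$ requires $g(0)=0$ (the growth hypotheses $(f_{2,2})$, $(f'_{2,2})$ only give $|g(t)|\leq C\exp(\alpha t^2)$, hence $|g(0)|\leq C$); you do get $g(v_n)\to g(0)$ in $L^q$ via the compact embedding $X\hookrightarrow L^r$ and uniform integrability, but the decay to zero is not automatic from the stated hypotheses alone. Second, and more importantly, the passage from $L^q$ (or $L^\infty$) control of the source to the quantitative decay $\|v_n\|_{0,\delta}\to0$ in the \emph{weighted} norm is precisely where all the work lives; you assert it as a ``bootstrap based on boundary regularity'' without carrying it out. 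The paper supplies exactly this via Propositions \ref{a.3} and \ref{a.4}: the former gives compactness in $C^{0,\beta}(\overline{\Omega})$, hence uniform convergence $v_n\to0$, and the latter converts the $L^\infty$ bound on the right-hand side into the $\delta^{1/2}$-weighted estimate. If you make the Green-kernel bootstrap explicit (or simply cite these two results, which hold in particular for $s=1/2$, $p=2$, $N=1$), your proof is complete and matches the paper's in all essentials.
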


Considering the eigenvalue sequence  $\{\lambda_{j}\}_{j\geq 1}$ of 
$(-\Delta)^{1/2}$ in $X_2^{1/2}$, we will then prove the following results.
\begin{theorem}[subcritical case]\label{5}
If $\lambda_{k} \leq a<\lambda_{k+1}$ for some $k\in\mathbb{N}\;(k\geq 1)$ and if $f$ satisfies conditions $(f_{1,2})-(f_{4,2})$ then, for $\lambda$ small enough, \eqref{C1M_1} has at least three nontrivial solutions. Additionally, if $f$ is odd, then \eqref{C1M_1} has infinitely many solutions.
\end{theorem}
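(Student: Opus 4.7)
The proof parallels that of Theorem \ref{3}, now exploiting the full spectral decomposition of the scalar fractional Laplacian $(-\Delta)^{1/2}$ on $(0,1)$. I take $X=X_2^{1/2}$ and work with the $C^1$ energy
\begin{equation*}
I_\lambda(u)=\tfrac{1}{2}\|u\|_X^2+\tfrac{\lambda}{q}\int_0^1|u|^q\,\dd x-\tfrac{a}{2}\int_0^1 u^2\,\dd x-\int_0^1 F(u)\,\dd x,
\end{equation*}
whose critical points are the weak solutions of \eqref{C1M_1}. On the orthogonal decomposition $X=V_k\oplus V_k^\perp$, with $V_k=\textrm{span}\{\varphi_1,\dots,\varphi_k\}$, the hypothesis $\lambda_k\le a<\lambda_{k+1}$ makes $\|u\|_X^2-a\|u\|_{L^2}^2$ non-positive on $V_k$ and coercive on $V_k^\perp$, providing the linking geometry that will yield the third solution.

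First, I would construct a positive solution $u_+$ and a negative solution $u_-$ via truncation. Define $I_\lambda^\pm$ by replacing every occurrence of $u$ in the nonlinear terms of $I_\lambda$ by $u^\pm=\max(\pm u,0)$; a standard fractional cone/maximum-principle argument forces any critical point of $I_\lambda^\pm$ to be sign-definite, hence a solution of \eqref{C1M_1}. The subcritical exponential bound from $(f_{2,2})$ combined with Proposition~\ref{C2P_4.4}, together with the term $\tfrac{\lambda}{q}\|u\|_{L^q}^q$, makes $I_\lambda^+$ bounded below on a small closed $C_\delta^0([0,1])$-ball, and a direct minimization on that ball yields a minimizer; hypothesis $(f_{3,2})$ and a suitable test direction imply it is nontrivial for $\lambda>0$ small. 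This is a local minimum of $I_\lambda^+$ in $C_\delta^0$, and Theorem~\ref{C2_24} upgrades it to a local minimum in $X$, hence a solution $u_+$; an analogous construction yields $u_-$.

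The third nontrivial solution comes from a Cerami-type saddle/linking theorem applied to the full $I_\lambda$, exploiting the local-minimum character of $u_\pm$ and the spectral linking geometry. The main obstacle is verifying the Cerami condition without $(AR)$. To prove boundedness of a Cerami sequence $(u_n)$, I use Jeanjean's trick: normalize $v_n=u_n/\|u_n\|_X$ and extract $v_n\rightharpoonup v$. If $v\not\equiv 0$, on $\{v\ne 0\}$ one has $|u_n|\to\infty$; dividing $I_\lambda(u_n)$ by $\|u_n\|_X^2$ and applying Fatou's lemma together with $(f_{4,2})$ — which forces $F(u_n)/|u_n|^2\to\infty$ — produces a contradiction. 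If $v\equiv 0$, the auxiliary sequence $t_nu_n$ obtained by maximizing $I_\lambda$ along $[0,1]\cdot u_n$ yields a bounded-energy almost-critical sequence satisfying $\langle I_\lambda'(t_nu_n),t_nu_n\rangle=0$, which again contradicts $(f_{4,2})$. Once boundedness is established, $(f_{2,2})$ and the Moser-Trudinger inequality permit passage of the exponential term to the limit in $L^1$, closing the Cerami condition.

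Finally, if $f$ is odd then $I_\lambda$ is even, and the perturbation $\tfrac{\lambda}{q}\|u\|_{L^q}^q$ produces infinitely many critical points at negative levels via a Krasnoselski-genus / symmetric mountain pass argument in the spirit of Ambrosetti-Brezis-Cerami. The delicate technical step throughout is the Cerami boundedness described above, where the subcritical character of the exponential growth is essential for the blow-up argument to stay compatible with the Moser-Trudinger threshold.
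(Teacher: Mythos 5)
Your overall architecture (two sign-definite solutions from truncated functionals, a third from linking with the spectral decomposition $X=V_k\oplus V_k^\perp$, and the symmetric argument when $f$ is odd) matches the paper, which indeed reduces Theorem \ref{5} to the scheme of Theorem \ref{3} with $W$ replaced by $V_k^\perp$. However, there is a genuine error in how you produce $u_\pm$ and, consequently, in your argument for infinitely many solutions: you have the sign of the concave term backwards. In problem \eqref{C1M_1} the concave term enters the equation as $-\lambda|u|^{q-2}u$, so it appears in the energy as $+\frac{\lambda}{q}\int_0^1|u|^q\,\dd x$. Near the origin this term \emph{dominates} $\frac{a}{2}\int u^2+\int F(u)$ (since $q<2$ and $(f_{3,2})$ kills the nonlinearity to order $2$), which is exactly the content of Lemma \ref{C3PE_4}: $0$ is a \emph{strict local minimum} of $I_\lambda^\pm$ in $C_\delta^0$, hence in $X$ by Theorem \ref{C2_24}. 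Minimizing $I_\lambda^+$ on a small ball therefore returns the trivial solution; there is no nontrivial local minimizer near $0$, and no "suitable test direction" can make the infimum negative on a small ball. The correct use of Theorem \ref{C2_24} is to certify the first mountain-pass condition at $0$; the second condition, $I_\lambda^+(t_0\varphi_1)<0$, comes from the superlinearity $(f_{4,2})$ (Lemma \ref{C2PE_34}), and the Mountain Pass Theorem then yields $u_+$ at a \emph{positive} level bounded by $\eta(\lambda)\to0$, which is what separates it from the linking solution at level $\geq\beta$.

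The same sign confusion undermines your final step: the Ambrosetti--Brezis--Cerami genus construction at negative levels requires the concave term to push the even functional below zero on small spheres of finite-dimensional subspaces, which does not happen here. The paper instead gets infinitely many solutions from the symmetric Mountain Pass Theorem (Rabinowitz, Theorem 9.12) at positive levels, using $(f_{4,2})$ to verify that $I_\lambda\to-\infty$ on finite-dimensional subspaces (Lemma \ref{C2PE_52}). Your Cerami/Jeanjean treatment of boundedness is a legitimate alternative to the paper's direct Zhang--Shen-type argument (Lemma \ref{C2_9}), though in the exponential setting the evaluation of $I_\lambda$ along $t_nu_n$ needs the Moser--Trudinger inequality to control $\int F(Rv_n)$, a point you should make explicit; but this part is not where the proposal fails.
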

\begin{theorem}[critical case]\label{6}
If $\lambda_{k} \leq a<\lambda_{k+1}$ for some $k\in\mathbb{N}\;(k\geq 1)$ and if $f$ satisfies $(f_{1,2})$,  $(f'_{2,2})$, $(f_{3,2})$ and $(f_{5,2})-(f_{7,2})$ then, for $\lambda$ small enough, \eqref{C1M_1} has at least three nontrivial solutions.
\end{theorem}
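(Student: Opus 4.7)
The plan is to mirror the proof strategy of Theorem \ref{4} in the one-dimensional setting of \eqref{C1M_1}, using Theorem \ref{C2_24} in place of Theorem \ref{0}. Define the energy functional $I_\lambda:X\to\R$ by
$$I_\lambda(u)=\frac{1}{2}\|u\|_X^2+\frac{\lambda}{q}\int_0^1|u|^q\,\dd x-\frac{a}{2}\int_0^1 u^2\,\dd x-\int_0^1 F(u)\,\dd x.$$
Conditions $(f_{1,2})$, $(f_{3,2})$, $(f'_{2,2})$, and the Moser-Trudinger inequality of Proposition 2 give $I_\lambda\in C^1(X,\R)$ and identify its critical points with weak solutions of \eqref{C1M_1}.

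Step 1 (two constant-sign local minimizers). Introduce the truncations $I_\lambda^{\pm}$ obtained from $I_\lambda$ by replacing $u$ with $u^\pm$ in the nonlinear terms. Because $1<q<2$, testing $I_\lambda^{+}$ along $t\varphi_1$ for small $t>0$ shows that the concave term dominates, so $\inf I_\lambda^{+}<0$ on some small ball in $C^0_\delta([0,1])$; by coercivity on a $C^0_\delta$-ball and weak lower semicontinuity in $X$, $I_\lambda^{+}$ attains a negative minimum $u_+$. Standard regularity for the half-Laplacian in dimension one lifts $u_+$ to $C^0_\delta$, and a strong maximum principle places $u_+$ in the interior of the ball with $u_+>0$ in $(0,1)$, so $u_+$ is a $C^0_\delta$-local minimum of $I_\lambda$ (the two functionals coincide on nonnegative functions). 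Applying Theorem \ref{C2_24} to $\Phi(u)=I_\lambda(u_++u)-I_\lambda(u_+)$ promotes $u_+$ to a strict local minimum in $X$. Symmetrically one obtains $u_-<0$.

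Step 2 (third solution via linking, and Palais-Smale). Decompose $X=X_-\oplus X_+$ according to eigenspaces of $(-\Delta)^{1/2}$ with eigenvalues $\leq\lambda_k$ and $\geq\lambda_{k+1}$; since $a\in[\lambda_k,\lambda_{k+1})$, the quadratic part $\tfrac{1}{2}\|u\|_X^2-\tfrac{a}{2}\|u\|_{L^2}^2$ is negative semi-definite on $X_-$ and positive definite on $X_+$. Since $u_-$ and $u_+$ are strict local minima, the de Paiva--Presoto-type linking between $\{u_-,u_+\}$ and an annulus in $X_+$ produces a minimax level
$$c_\lambda=\inf_{\gamma\in\Gamma}\max_{t\in[0,1]}I_\lambda(\gamma(t)),$$
with $c_\lambda$ strictly above $\max\{I_\lambda(u_+),I_\lambda(u_-)\}$. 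To extract a critical point, I would show that any PS sequence $(u_n)$ at level $c_\lambda$ is bounded: condition $(f_{5,2})$ yields the Nehari-type inequality $\tfrac{1}{2}f(t)t-F(t)\geq 0$, which, combined with the concave contribution ($q<2$), controls $\|u_n\|_X$ without the (AR) condition. After extracting a weak limit $u_*$, use $(f_{6,2})$ to pass $F(u_n)\to F(u_*)$ in $L^1(0,1)$, and apply the Moser--Trudinger inequality together with $(f_{7,2})$ to upgrade weak to strong convergence. The third solution $u_*$ is distinct from $u_\pm$ because $I_\lambda(u_*)=c_\lambda>\max\{I_\lambda(u_\pm)\}$.

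The main obstacle is Step 2: forcing the minimax level $c_\lambda$ to sit strictly below the critical Moser-Trudinger threshold $\pi$ of Proposition 2, which is what rules out concentration along the PS sequence. Condition $(f_{7,2})$ is engineered exactly for this: choosing a path through $\mathbb{F}=\mathrm{span}\{\varphi_1,\varphi\}$ with $\varphi\in W$, the pointwise bound $F(t)\geq (C_r/r)|t|^r$ together with the Sobolev constant $C=\inf_{u\in\mathbb{F}}\|u\|_{L^r}/\|u\|_X$ yields
$$\max_{u\in\mathbb{F}}I_\lambda(u)\leq\max_{\rho\geq 0}\left(\frac{\rho^2}{2}-\frac{C_r}{r}C^r\rho^r\right)+o_\lambda(1)=\frac{r-2}{2r}\bigl(C_rC^r\bigr)^{-\tfrac{2}{r-2}}+o_\lambda(1),$$
and the explicit lower bound on $C_r$ stated in $(f_{7,2})$ makes the right-hand side less than $\pi/\alpha_0$; the small-$\lambda$ correction absorbs the contribution of the concave term. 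Verifying the PS condition at $c_\lambda$ then reduces to a standard convergence argument exploiting $(f_{6,2})$.
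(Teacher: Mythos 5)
Your overall architecture (two constant-sign solutions plus a linking solution below the Moser--Trudinger threshold, with Theorem \ref{C2_24} supplying the $C^0_\delta$-to-$X$ transfer and the decomposition of $X$ along the eigenspaces of $(-\Delta)^{1/2}$) is the right skeleton, and it is indeed what the paper does: Section 6 simply runs the proof of Theorem \ref{4} again with $X=V_k\oplus W_k$, $V_k=\mathrm{span}\{\varphi_1,\dots,\varphi_k\}$. But Step 1 contains a sign error that breaks the argument. In \eqref{C1M_1} the concave term enters the equation as $-\lambda|u|^{q-2}u$, so it appears in the energy with a \emph{plus} sign, $+\frac{\lambda}{q}\int_0^1|u|^q\,\dd x$, exactly as you wrote $I_\lambda$. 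Consequently, for small $t>0$ and using $(f_{3,2})$ (so $F(t\varphi_1)=o(t^2)$),
\begin{equation*}
I_\lambda^+(t\varphi_1)=\frac{\lambda t^q}{q}\int_0^1\varphi_1^q\,\dd x+o(t^q)>0 ,
\end{equation*}
i.e.\ the dominant concave term makes $0$ a \emph{strict local minimum}, not a mountain that hides a negative infimum nearby. Your claim that ``the concave term dominates, so $\inf I_\lambda^{+}<0$ on some small ball'' is the Ambrosetti--Brezis--Cerami scenario, which requires the opposite sign. Hence $u_\pm$ cannot be produced as negative-energy local minimizers here; in the paper they are mountain-pass critical points at small \emph{positive} levels (Lemma \ref{C3PE_4} gives the strict local minimum of $J^{+}_{\lambda,p}$ at $0$ in $C^0_\delta$, Theorem \ref{0}/\ref{C2_24} transfers it to $X$, Lemma \ref{C2PE_34} gives $I^{+}_{\lambda,p}(t\varphi_1)<0$ for large $t$, and Lemma \ref{C3PE_9} keeps the minimax level below $\eta(\lambda)\to0$). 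This also undercuts your Step 2, which links between $\{u_-,u_+\}$ \emph{assuming they are strict local minima}; the paper instead performs a standard Rabinowitz linking over $Q=(B_{\bar R}\cap V_k)\oplus[0,\bar R\varphi]$, $\varphi\in W_k$, and separates the third solution from $u_\pm$ by energy levels ($C_\lambda\ge\beta>\eta(\lambda)\ge C^{\pm}_\lambda$), not by a minimax between two minima.

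Two smaller points. First, the Moser--Trudinger threshold in this case is $\frac{s}{N}\bigl(\frac{\alpha^*_{s,N}}{\alpha_0}\bigr)^{\frac{N-s}{s}}=\frac{1}{2}\cdot\frac{\pi}{\alpha_0}$ (with $s=1/2$, $N=1$), not $\pi/\alpha_0$; your $(f_{7,2})$ computation must beat $\frac{\pi}{2\alpha_0}$. Second, boundedness of Palais--Smale sequences without (AR) is not automatic from $\frac{1}{2}f(t)t-F(t)\ge0$ alone; the paper invokes the Zhang--Shen argument (Lemma \ref{C2_9}), which uses $(f_{1,2})$, $(f'_{2,2})$ and the superquadraticity $(f_{4,2})$ (implied by $(f_{7,2})$). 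You should cite or reproduce that argument rather than assert boundedness.
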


The main achievement of this paper are the minimization results that will be presented in Section \ref{M} (see notation there): we prove that a local minimum in $C_{s}^0(\overline{\Omega})$ is also a local minimum in $W^{s,p}_0$ for nonlinearities with exponential growth. They are the counterpart of the result obtained by de Paiva and Massa \cite{paiva} (also de Paiva and Presoto \cite{paiva1}) and their proofs are obtained by applying ideas developed by Barrios, Colorado, de Pablo and Sanchez \cite{barrios}, Giacomoni, Prashanth and Sreenadh \cite{giacomoni} and Iannizzoto, Mosconi and Squassina \cite{iannizzoto2}. We would like to  emphasize that with exception  of \cite{giacomoni}, which deals with local  $N$-Laplacian case with exponential growth, other references treated  local or non-local Laplacian with polynomial growth.
\section{Preliminaries}\label{prel}

\begin{definition}
We say that $u\in X_{p}^{s}$ is a weak solution to \eqref{principal2} if 
$$\langle A(u),v\rangle=-\lambda\int_{\Omega}\vert u\vert^{q-2}uv\dd x+a\int_{\Omega}\vert u\vert^{p-2}uv\dd x+\int_{\Omega}f(u)v\dd x,$$ 
\end{definition}
for all $v\in X_p^s$, with $A:X_p^s\to (X_{p}^{s})^*$ being defined by \eqref{operador}.

If $sp=N=1$, $p=2$ and $s=1/2$, the operator $A$ defines an inner product in the space $X_2^{1/2}$, which will be denoted by $\langle A(u),v\rangle=\langle u,v\rangle_{X_2^{1/2}}$. This justifies the following definition.
\begin{definition}
We say that $u\in X=X_2^{1/2}$ is a weak solution to \eqref{C1M_1} if
$$\langle u, v\rangle_{X_2^{1/2}}=-\lambda\int_0^1\vert u\vert^{q-2}uv\dd x+ a\int_0^1uv\dd x+\int_{0}^{1}f(u)v\dd x,\; \textrm{ for all }\; v\in X_2^{1/2}.$$
\end{definition}
We recall that  
$$W^{s,p}(\mathbb{R}^{N}):=\left\lbrace u\in L^p(\mathbb{R}^{N}) : \int_{\mathbb{R}^{2N}}\frac{\vert u(x)-u(y)\vert^p}{|x-y|^{N+sp}}\dd x\dd y<\infty\right\rbrace,$$
is a uniformly convex Banach space with the norm
\begin{equation*}
\lVert u\rVert_{ W^{s,p}(\mathbb{R}^N)}:=\left(\Vert u\Vert_{L^p(\mathbb{R}^N)}^p +  [u]_{ W^{s,p}(\mathbb{R}^N)}^p\right)^{1/p},
\end{equation*}
where
$$ [u]_{ W^{s,p}(\mathbb{R}^N)}:=\left(\int_{ \mathbb{R}^{2N}} \frac{\vert u(x)-u(y)\vert^p}{|x-y|^{N+sp}}\dd x\dd y\right)^{1/p}$$
is the Gagliardo seminorm. Since $\Omega\subset\mathbb{R}^{N}$ is a bounded, smooth domain and $0<s<1<p$, 
$[u]_{W^{s,p}(\mathbb{R}^N)}$ defines an equivalent norm in $X_p^s$, see \cite[p.4]{iannizzoto0}. From now on we will consider $W^{s,p}(\mathbb{R}^N)$ equipped with this norm. We also recall (see \cite[Teorema 6.5, 7.1]{dinezza}) that $X_p^s$ is compactly immersed in $L^{r}(\Omega)$ for all $1\leq r<\infty$, the immersion being continuous in the case $r=\infty$.

If $p=2$, $s=1/2$ and $N=1$ we will denote $W^{1/2,2}(\mathbb{R}):=H^{1/2}(\mathbb{R})$ and $X_{2}^{1/2}$ simply by $X$. That is, $$X=\{u\in H^{1/2}(\mathbb{R}) : u = 0 \textrm{ in } \mathbb{R}\setminus (0,1)\}.$$ Of course, $H^{1/2}(\mathbb{R})$ is a Hilbert space with the inner product
$$\langle u,v\rangle_{X}=\int_{\mathbb{R}^2}\frac{(u(x)-u(y))(v(x)-v(y))}{|x-y|^{2}}\dd x\dd y.$$ 

We define the functional  $I_{\lambda,p}:X_p^s\to\mathbb{R}$ by 
\begin{equation*}
I_{\lambda,p}(u)=\displaystyle\frac{1}{p}\|u\|_{X_p^s}^p+\frac{\lambda}{q}\int_{\Omega}\vert u\vert^{q}\dd x-\frac{a}{p}\int_{\Omega}\vert u\vert^p\dd x-\int_{\Omega} F(u)\dd x.
\end{equation*}
In the case $p=2$, $s=1/2$ and $N=1$ we simplify the notation: $I_{\lambda}=I_{\lambda,p}$. Note that, if $f$ is odd, then $I_{\lambda,p}$ is even.

The next result is a direct consequence of  \cite[Proposição 1.3.]{per}.
\begin{lemma}\label{C2P_4.6}
If $u_n\rightharpoonup u$ in $X_p^s$ and $\langle A(u_n),u_n-u\rangle\to 0$, then $u_n\to u$ in $X_p^s$.  
\end{lemma}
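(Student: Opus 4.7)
The plan is to show that the fractional $p$-Laplacian operator $A$ enjoys the so-called $(S_+)$ property, by exploiting the weak convergence together with the elementary inequality
\begin{equation*}
\bigl(|a|^{p-2}a-|b|^{p-2}b\bigr)(a-b)\ \geq\ c_{p}\,|a-b|^{p},\qquad a,b\in\mathbb{R},
\end{equation*}
which holds for $p\geq 2$ with some constant $c_{p}>0$ (a standard consequence of Simon's inequality in the superquadratic regime). Because the paper fixes $p\geq 2$, this is all the convexity information I need; the harder range $1<p<2$ would require different Simon-type inequalities and a Hölder trick to recover strong convergence of the Gagliardo seminorm.

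First I would check that $A(u)\in (X_{p}^{s})^{*}$. By Hölder's inequality applied with exponents $p/(p-1)$ and $p$ to the double integral defining $\langle A(u),v\rangle$ in \eqref{operador}, one immediately gets
\begin{equation*}
|\langle A(u),v\rangle|\ \leq\ [u]_{W^{s,p}(\mathbb{R}^{N})}^{p-1}\,[v]_{W^{s,p}(\mathbb{R}^{N})},
\end{equation*}
so $v\mapsto\langle A(u),v\rangle$ is a continuous linear functional on $X_{p}^{s}$. Since $u_{n}-u\rightharpoonup 0$ in $X_{p}^{s}$ this forces $\langle A(u),u_{n}-u\rangle\to 0$. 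Combined with the assumption $\langle A(u_{n}),u_{n}-u\rangle\to 0$, I then obtain
\begin{equation*}
\langle A(u_{n})-A(u),\,u_{n}-u\rangle\ \longrightarrow\ 0.
\end{equation*}

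Next I would unfold this quantity as a double integral over $\mathbb{R}^{2N}$ against the kernel $|x-y|^{-(N+sp)}$, and apply the pointwise inequality above with $a=u_{n}(x)-u_{n}(y)$ and $b=u(x)-u(y)$. Because the kernel is positive and $a-b=(u_{n}-u)(x)-(u_{n}-u)(y)$, integrating the pointwise bound yields
\begin{equation*}
c_{p}\,[u_{n}-u]_{W^{s,p}(\mathbb{R}^{N})}^{p}\ \leq\ \langle A(u_{n})-A(u),\,u_{n}-u\rangle\ \longrightarrow\ 0.
\end{equation*}
Since $[\,\cdot\,]_{W^{s,p}(\mathbb{R}^{N})}$ is equivalent to $\|\cdot\|_{X_{p}^{s}}$ on $X_{p}^{s}$ (recalled right before the lemma), we conclude $\|u_{n}-u\|_{X_{p}^{s}}\to 0$, which is the desired strong convergence.

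No genuinely hard step appears in this outline. The only book-keeping point is verifying that $A(u)$ lies in the dual so as to split the cross term $\langle A(u_{n})-A(u),u_{n}-u\rangle$ into $\langle A(u_{n}),u_{n}-u\rangle-\langle A(u),u_{n}-u\rangle$, and this is routine via Hölder. Everything else is a direct application of Simon's inequality and the equivalence of the Gagliardo seminorm with the $X_{p}^{s}$-norm; no compactness argument or auxiliary truncation is required.
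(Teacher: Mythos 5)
Your argument is correct. Note that the paper does not actually prove this lemma: it simply records it as a direct consequence of Proposition 1.3 in Perera--Agarwal--O'Regan \cite{per}, i.e.\ of the $(S_+)$ property of the fractional $p$-Laplacian. What you have written is the standard self-contained proof of exactly that property in the superquadratic regime: the Hölder bound $|\langle A(u),v\rangle|\leq [u]_{W^{s,p}}^{p-1}[v]_{W^{s,p}}$ shows $A(u)\in (X_p^s)^*$, so weak convergence kills the cross term $\langle A(u),u_n-u\rangle$, and Simon's inequality $\bigl(|a|^{p-2}a-|b|^{p-2}b\bigr)(a-b)\geq c_p|a-b|^p$ integrated against the kernel gives $c_p[u_n-u]_{W^{s,p}}^p\leq\langle A(u_n)-A(u),u_n-u\rangle\to 0$. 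Your restriction to $p\geq 2$ is harmless since the paper assumes $0<s<1<q<2\leq p$ throughout, so no Hölder trick for the singular range $1<p<2$ is needed. In short, your proof supplies the argument the paper outsources to the reference, and buys self-containedness at no extra cost.
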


Let us consider the Dirichlet problem 
\begin{align}\label{a.2}
\left\{\begin{array}{rlll}
(-\Delta)^{s}_{p} u&=&f(u) &\textrm{in}\ \Omega,\\
u&=&0 &\textrm{in}\ \mathbb{R}^N\setminus\Omega,
\end{array}
\right.
\end{align}
where $\Omega\subset\mathbb{R}^N$ ($N>1$) is a bounded, smooth domain, $s\in(0,1)$, $p>1$ and $f\in L^{\infty}(\Omega)$.

The next two results can be found in Iannizzotto, Mosconi and Squassina \cite{iannizzoto2}, Theorems 1.1 and 4.4, respectively.
\begin{proposition}\label{a.3}
There exist $\alpha\in(0,s]$ and $C_{\Omega}>0$ depending only on $N$, $p$, $s$, with $C_{\Omega}$ also depending on $\Omega$, such that, for all weak solution $u\in X^s_p$ of \eqref{a.2}, $u\in C^\alpha(\overline{\Omega})$ and
$$\|u\|_{C^{\alpha}(\overline{\Omega})} \leq C_{\Omega}\|f\|_{L^{\infty}(\Omega)}^{\frac{1}{p-1}}.$$
\end{proposition}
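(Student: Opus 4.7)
The plan is to follow the standard two-step strategy for regularity up to the boundary for the fractional $p$-Laplacian: first promote weak solutions to $L^\infty$, then establish interior and boundary Hölder estimates separately, and finally patch them into a global $C^\alpha(\overline\Omega)$ bound. By the natural homogeneity of $(-\Delta)^s_p$, the map $u\mapsto \|f\|_{L^\infty(\Omega)}^{-1/(p-1)}u$ converts a solution of \eqref{a.2} into a weak solution of the same equation with right-hand side of $L^\infty$ norm $\le 1$; so I first reduce to the normalized case $\|f\|_{L^\infty(\Omega)}\le 1$, which explains the exponent $1/(p-1)$ in the final bound.

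With this reduction in place, step one is an $L^\infty$ bound. I would test the weak formulation against the truncations $(u-k)_{\pm}$ with $k\ge 0$, exploit the fractional Sobolev embedding $X_p^s\hookrightarrow L^{p^*_s}(\Omega)$ (for $sp<N$) or $L^r$ for every finite $r$ (in the critical case $sp=N$), and run a De Giorgi/Moser iteration on the resulting Caccioppoli-type inequality. The nonlocal tail $\int_{\mathbb{R}^N\setminus\Omega}|u(y)|^{p-1}/|x-y|^{N+sp}\,\mathrm{d}y$ vanishes since $u\equiv 0$ outside $\Omega$, so the iteration is clean and yields $\|u\|_{L^\infty(\Omega)}\le C$ with $C=C(N,p,s,\Omega)$.

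Step two is \emph{interior} $C^\alpha$ regularity on balls $B_{2R}\Subset\Omega$. Here I would invoke a nonlocal oscillation-decay argument: apply the weak Harnack/logarithmic estimates for the fractional $p$-Laplacian of DiCastro–Kuusi–Palatucci to show that $\mathrm{osc}_{B_r} u\le \sigma\,\mathrm{osc}_{B_R}u + C r^\sigma$ for some $\sigma\in(0,1)$ and every $r\le R/2$, controlling the nonlocal tail by $\|u\|_{L^\infty}$. Iterating the oscillation decay on dyadic balls yields $[u]_{C^\alpha(B_R)}\le C$ for a suitable $\alpha\in(0,s]$ depending only on $N,p,s$.

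Step three—the hard part—is the \emph{boundary} Hölder regularity, for it is here that the nonlocal and nonlinear features genuinely interact with the geometry of $\Omega$. I would use the exterior-ball/uniform-exterior-cone property of a smooth bounded $\Omega$ to build a radial barrier $w(x)=\bigl(d(x,\partial\Omega)\wedge 1\bigr)^{\alpha}$ (with $\alpha\le s$ small enough) satisfying, in the half-ball picture, $(-\Delta)^s_p w \ge 1$ near the boundary; together with the $L^\infty$ bound $\|u\|_\infty\le C$ and a weak comparison principle for $(-\Delta)^s_p$, this yields $|u(x)|\le C\,d(x,\partial\Omega)^\alpha$ for all $x$ near $\partial\Omega$. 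Combined with the interior Hölder bound via a standard chaining/Campanato patching argument—splitting the pair $x,y$ according as the segment stays in $\Omega_\rho=\{d(\cdot,\partial\Omega)>\rho\}$ or approaches the boundary, where $\rho\sim|x-y|$—one gets a global estimate $[u]_{C^\alpha(\overline\Omega)}\le C$, and together with the $L^\infty$ bound this is exactly $\|u\|_{C^\alpha(\overline\Omega)}\le C_\Omega\|f\|_{L^\infty}^{1/(p-1)}$ after undoing the initial scaling. The principal obstacle is the construction and verification of the boundary barrier: one needs an $s$-harmonic-like barrier for the nonlinear, nonlocal operator that produces a positive right-hand side and whose tail contribution is controlled, which is the delicate nonlocal analogue of the classical Krylov boundary estimate.
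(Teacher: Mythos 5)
The paper does not actually prove Proposition \ref{a.3}: it is imported verbatim from Iannizzotto, Mosconi and Squassina \cite{iannizzoto2} (their Theorem 1.1), so there is no in-paper argument to compare yours against line by line. That said, your roadmap is a faithful description of the architecture of the proof in that reference: reduction to $\|f\|_{L^\infty}\leq 1$ by the $(p-1)$-homogeneity of $(-\Delta)^s_p$, an $L^\infty$ bound by De Giorgi--Moser iteration on the truncations $(u-k)_\pm$ (with the nonlocal tail trivial because $u\equiv 0$ outside $\Omega$), interior $C^\alpha$ regularity via the DiCastro--Kuusi--Palatucci oscillation decay, a boundary barrier comparable to $d(\cdot,\partial\Omega)^s$ together with a weak comparison principle, and a patching step. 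One structural remark: in \cite{iannizzoto2} the boundary control is exactly the $\delta^s$ bound that this paper records separately as Proposition \ref{a.4}, and the global $C^\alpha$ seminorm is then obtained by interpolating the interior estimate against that $\delta^s$ bound on a Whitney-type decomposition, rather than by proving an independent boundary H\"older seminorm estimate as your step three suggests; the two routes are equivalent in outcome but the interpolation route avoids estimating oscillations of $u$ between two points both near $\partial\Omega$ directly.

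The genuine gap is that, as written, your proposal is a program rather than a proof: each of its three pillars is itself a substantial theorem that you invoke without establishing. Concretely, (i) the tail-controlled oscillation decay $\mathrm{osc}_{B_r}u\leq\sigma\,\mathrm{osc}_{B_R}u+Cr^{\sigma}$ for the fractional $p$-Laplacian requires the full logarithmic/Krylov--Safonov machinery of DiCastro--Kuusi--Palatucci; (ii) the verification that a barrier of the form $(d\wedge 1)^{\alpha}$ is a weak supersolution with $(-\Delta)^s_p w\geq 1$ near $\partial\Omega$ is delicate precisely because for the nonlinear operator one cannot use the explicit $s$-harmonicity of $d^s$ in a half-space and must instead perturb it and control the singular integral near the boundary (this is the content of \cite{iannizzoto2}, Section 3); and (iii) the weak comparison principle for the degenerate, nonlocal, nonlinear operator on functions that agree outside $\Omega$ also needs proof. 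Since none of these can be carried out in a few lines, the honest conclusion is the one the paper itself draws: the statement should be cited from \cite{iannizzoto2} rather than re-proved. Your outline correctly identifies where the difficulty lies (the boundary barrier), but it does not close it.
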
 
\begin{proposition}\label{a.4}
Let $u\in X^s_p$  satisfies $\left|(-\Delta)_{p}^{s} u\right| \leq K$ weakly in $\Omega$ for some $K>0$. Then 
$$|u| \leq\left(C_{\Omega} K\right)^{\frac{1}{p-1}} \delta^{s}\quad a.e.\,\mbox{ in }\Omega,$$ 
for some $C_{\Omega}=C(N, p, s, \Omega)$. 
\end{proposition}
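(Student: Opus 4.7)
The plan is to derive the pointwise decay estimate by comparing $u$ with an explicit barrier that behaves like $\delta^{s}$ near $\partial\Omega$, using a weak comparison principle for the fractional $p$-Laplacian. By the $(p-1)$-homogeneity of $(-\Delta)^s_p$, the substitution $v = K^{-1/(p-1)}u$ reduces the task to proving that $|v|\le C_\Omega\,\delta^{s}$ a.e.\ in $\Omega$ whenever $|(-\Delta)^s_p v|\le 1$ weakly. So from now on I would assume $K=1$.

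Since $\Omega$ is smooth and bounded, it satisfies a uniform interior ball condition with some radius $\rho>0$. The core step is a local one: on every ball $B_R(x_0)\subset\Omega$ with $R\le\rho$, construct a barrier of the form
\begin{equation*}
w(x) = C_*\bigl(R^{2}-|x-x_0|^{2}\bigr)_+^{s},
\end{equation*}
with $C_*=C_*(N,p,s,R)$ chosen so that $(-\Delta)^s_p w\ge 1$ in $B_R(x_0)$ and simultaneously $w\ge\|u\|_{L^\infty(\Omega)}$ on $\mathbb{R}^N\setminus B_R(x_0)$. The required global $L^\infty$ bound on $u$ is provided by Proposition \ref{a.3}. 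For the linear case $p=2$ the identity $(-\Delta)^s\bigl((R^{2}-|x|^{2})_+^{s}\bigr)=\kappa_{N,s}$ on $B_R$ is classical; for general $p$ one must compute the principal value $(-\Delta)^s_p w$ directly, splitting it into a singular near-field piece (estimated using the Hölder regularity of the barrier) and a far-field tail (finite because $w$ is compactly supported), and then verifying that an appropriate choice of $C_*$ forces the result above $1$.

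With the barrier in hand, the weak comparison principle for $(-\Delta)^s_p$ --- proved by testing the inequality $\langle A(w)-A(u),(u-w)_+\rangle\ge 0$ and invoking the strict monotonicity of $t\mapsto|t|^{p-2}t$ --- gives $u\le w$ a.e.\ in $B_R(x_0)$. Applying the same reasoning to $-u$ yields $|u|\le w$. For any $x\in\Omega$ with $\delta(x)\le\rho/2$, the interior ball condition produces $x_0\in\Omega$ with $x\in B_\rho(x_0)\subset\Omega$ and $\rho-|x-x_0|\le\delta(x)$, so
\begin{equation*}
|u(x)|\le C_*\bigl(\rho+|x-x_0|\bigr)^{s}\bigl(\rho-|x-x_0|\bigr)^{s}\le C_\Omega\,\delta(x)^{s},
\end{equation*}
while for $\delta(x)\ge\rho/2$ the bound follows trivially from the $L^\infty$ estimate. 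Reinstating the $K$-dependence via the homogeneity step gives the claimed inequality.

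The principal obstacle is the barrier construction in the nonlinear regime: unlike the case $p=2$, the function $(R^{2}-|x|^{2})_+^{s}$ does not have a closed-form fractional $p$-Laplacian, so the lower bound $(-\Delta)^s_p w\ge 1$ has to be obtained through delicate estimates on the principal value integral, uniform in the base point $x_0$ and involving both the local convexity/concavity of $w$ and the exterior contribution. The comparison step and the covering argument are, by contrast, routine once such a barrier is available.
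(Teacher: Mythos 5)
There is a genuine gap, and it is worth noting first that the paper itself does not prove this statement: it is quoted verbatim from Iannizzotto--Mosconi--Squassina \cite{iannizzoto2} (Theorem 4.4), so the only thing to assess is whether your barrier argument would actually work. Your homogeneity reduction ($v=K^{-1/(p-1)}u$, using that $(-\Delta)^s_p$ is $(p-1)$-homogeneous) and your statement of the nonlocal comparison principle are fine. The fatal problem is the barrier itself. The function $w(x)=C_*\bigl(R^{2}-|x-x_0|^{2}\bigr)_+^{s}$ vanishes identically on $\mathbb{R}^N\setminus B_R(x_0)$, so the requirement ``$w\ge\|u\|_{L^\infty(\Omega)}$ on $\mathbb{R}^N\setminus B_R(x_0)$'' is self-contradictory unless $u\equiv 0$: no choice of $C_*$ can make a compactly supported function positive outside its support. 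This condition is not cosmetic — for a \emph{nonlocal} operator the comparison principle on $B_R(x_0)$ genuinely needs $u\le w$ on all of $\mathbb{R}^N\setminus B_R(x_0)$, and an interior ball $B_R(x_0)\subset\Omega$ leaves a large portion of $\Omega$ in that exterior set, where $u$ has no sign and $w=0$. So the conclusion $u\le w$ in $B_R(x_0)$ cannot be reached, and the whole covering argument built on interior tangent balls collapses.

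The correct geometry, which is what \cite{iannizzoto2} uses, is dual to yours: one exploits the \emph{exterior} ball condition. For each boundary point one places a tangent ball $B_\rho(y_0)\subset\mathbb{R}^N\setminus\Omega$ and builds an annular barrier $w$ with $(-\Delta)^s_p w\ge 1$ weakly in an annulus around $B_\rho(y_0)$, $w=0$ on $\overline{B_\rho(y_0)}$ (where $u=0$ as well, since $u$ vanishes outside $\Omega$), $w$ comparable to $\mathrm{dist}(\cdot,B_\rho(y_0))^{s}$ near the inner sphere, and $w$ bounded below by a positive constant on the outer region (where $u\le\|u\|_{L^\infty}$ is controlled via Proposition \ref{a.3}). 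With that barrier the exterior comparison condition is actually satisfiable, and the rest of your outline (comparison, the same computation $\rho^{s}$ versus $\delta(x)^{s}$, the trivial bound for $\delta(x)\ge\rho/2$) goes through. Your final paragraph correctly identifies that computing $(-\Delta)^s_p$ of an explicit profile is delicate for $p\neq 2$, but even granting that step, the interior-ball setup would still not yield the theorem.
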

By adapting arguments of Zhang and Shen \cite[Lemma 2]{zhang} we obtain the following result.
\begin{lemma}[Critical and subcritical cases]\label{C2_9}

If $f$ satisfies $(f_{1,p})$, $(f_{2,p})$ (or $(f'_{2,p})$) and $(f_{4,p})$, then any $(PS)$-sequence for $I_{\lambda,p}$ is bounded.
\end{lemma}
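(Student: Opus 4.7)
I would argue by contradiction: assume $(u_n) \subset X_p^s$ is a Palais--Smale sequence with $\|u_n\|_{X_p^s}\to\infty$. Set $v_n = u_n/\|u_n\|_{X_p^s}$, so $\|v_n\|_{X_p^s}=1$. By reflexivity and the compact embedding $X_p^s\hookrightarrow L^r(\Omega)$ for every finite $r$, up to a subsequence $v_n\rightharpoonup v$ in $X_p^s$ and $v_n\to v$ in $L^r(\Omega)$ for all $r\in[1,\infty)$. Dividing the identity $I_{\lambda,p}(u_n)=c+o(1)$ by $\|u_n\|_{X_p^s}^p$ and using $q<p$ together with the bounds on $\|v_n\|_{L^q},\|v_n\|_{L^p}$, one obtains
\begin{equation*}
\lim_{n\to\infty}\frac{1}{\|u_n\|_{X_p^s}^p}\int_{\Omega}F(u_n)\,\dd x \;=\; \frac{1}{p}-\frac{a}{p}\int_{\Omega}|v|^p\,\dd x \;<\;\infty.
\end{equation*}

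\emph{Case A: $v\not\equiv 0$.} On the set $\{v\neq 0\}$, $|u_n(x)|\to\infty$ a.e., so $(f_{4,p})$ gives $F(u_n)/|u_n|^p\to+\infty$ pointwise there; since $F\ge 0$ by $(f_{1,p})$, Fatou's lemma applied to $\bigl(F(u_n)/|u_n|^p\bigr)|v_n|^p$ forces $\|u_n\|^{-p}\int_\Omega F(u_n)\,\dd x\to+\infty$, contradicting the displayed finite limit.

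\emph{Case B: $v\equiv 0$.} Here $v_n\to 0$ in every $L^r$, and I would use the Jeanjean trick. Fix $R>0$ and consider $Rv_n$, which has $\|Rv_n\|_{X_p^s}=R$. The crucial auxiliary claim is that $\int_\Omega F(Rv_n)\,\dd x\to 0$: in the subcritical case this follows from $(f_{2,p})$ by choosing $\alpha>0$ so small that $\alpha R^{N/(N-s)}<\alpha^*_{s,N}$, applying Proposition 1.1 to get uniform integrability of $\exp(\alpha|Rv_n|^{N/(N-s)})$, and then invoking Vitali's theorem with $v_n\to 0$ in $L^r$; in the critical case $(f'_{2,p})$ the same argument works provided $R$ is taken small enough that $\alpha_0R^{N/(N-s)}<\alpha^*_{s,N}$. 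Granting the claim, $I_{\lambda,p}(Rv_n)\to R^p/p$. Now choose $t_n\in[0,1]$ with $I_{\lambda,p}(t_nu_n)=\max_{t\in[0,1]}I_{\lambda,p}(tu_n)$; since $s_n:=R/\|u_n\|_{X_p^s}\in(0,1)$ eventually and $I_{\lambda,p}(s_nu_n)=I_{\lambda,p}(Rv_n)\to R^p/p$, we obtain $I_{\lambda,p}(t_nu_n)\ge R^p/p-o(1)$ for every admissible $R$, so $I_{\lambda,p}(t_nu_n)\to+\infty$. Because $I_{\lambda,p}(0)=0$ and $I_{\lambda,p}(u_n)\to c$, for large $n$ the maximum is attained at an interior $t_n\in(0,1)$, where $\frac{\dd}{\dd t}I_{\lambda,p}(tu_n)|_{t=t_n}=0$; equivalently, $\langle I'_{\lambda,p}(t_nu_n),t_nu_n\rangle=0$. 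Subtracting this identity from $pI_{\lambda,p}(t_nu_n)$ yields
\begin{equation*}
pI_{\lambda,p}(t_nu_n)\;=\;\lambda\tfrac{p-q}{q}\int_\Omega|t_nu_n|^q\,\dd x+\int_\Omega\bigl[f(t_nu_n)(t_nu_n)-pF(t_nu_n)\bigr]\dd x\;\to\;+\infty.
\end{equation*}
A parallel computation for the PS sequence itself gives $\int_\Omega[f(u_n)u_n-pF(u_n)]\,\dd x=O(1)+o(\|u_n\|_{X_p^s})$. Comparing the two identities, using $t_n\in(0,1)$, $F\ge 0$, and the asymptotic super-$p$-homogeneity $(f_{4,p})$ (which forces the relevant quantity to grow \emph{monotonically} at infinity in the only relevant regime $|u_n|\to\infty$), leads to the desired contradiction.

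\emph{Main obstacle.} The delicate point is Case B. The Moser--Trudinger step for $\int F(Rv_n)\to 0$ is fragile in the critical regime $(f'_{2,p})$ because $\|Rv_n\|_{X_p^s}=R$ is fixed, so $R$ must be restricted; this restriction is harmless for the argument (arbitrarily small $R$ still forces $I_{\lambda,p}(t_nu_n)$ to grow without bound in the iteration, as one may in fact iterate the bound). The second subtle point is the final comparison between the balance at $t_nu_n$ and at $u_n$, which is precisely the adaptation of Zhang--Shen that the paper refers to.
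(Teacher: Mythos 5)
The paper never writes out a proof of Lemma \ref{C2_9}: it only asserts that the result follows ``by adapting arguments of Zhang and Shen'', so your proposal can only be measured against that intended scheme rather than against an explicit argument. Your architecture --- normalize $v_n=u_n/\|u_n\|_{X_p^s}$, split according to whether the weak limit $v$ vanishes, use Fatou with $(f_{4,p})$ when $v\not\equiv 0$, and run a Jeanjean-type argument with $t_n$ maximizing $I_{\lambda,p}(tu_n)$ on $[0,1]$ when $v\equiv 0$ --- is indeed the standard route for such lemmas, and Case A together with the Moser--Trudinger/Vitali step for $\int_\Omega F(Rv_n)\,\dd x\to 0$ is correct as written.

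The decisive step of Case B, however, has a genuine gap. After reaching
\begin{equation*}
pI_{\lambda,p}(t_nu_n)=\lambda\tfrac{p-q}{q}\int_\Omega|t_nu_n|^q\,\dd x+\int_\Omega\bigl[f(t_nu_n)\,t_nu_n-pF(t_nu_n)\bigr]\,\dd x\to+\infty,
\end{equation*}
you must dominate the right-hand side by the corresponding quantity for $u_n$, i.e.\ you need $H(t_nu_n(x))\le H(u_n(x))$ (or Jeanjean's $\theta$-weakened version) pointwise, where $H(t)=tf(t)-pF(t)$ and $t_n\in(0,1)$. That inequality is exactly what a monotonicity hypothesis such as $(f_{5,p})$ delivers; it is \emph{not} a consequence of $(f_{4,p})$, which constrains only $F$ and says nothing about the sign, growth or monotonicity of $tf(t)-pF(t)$ (one can have $F(t)/|t|^p\to\infty$ while $H$ oscillates). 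Since the lemma is invoked in the subcritical theorems under $(f_{1,p})$--$(f_{4,p})$ alone, you cannot borrow $(f_{5,p})$, and your parenthetical claim that $(f_{4,p})$ ``forces the relevant quantity to grow monotonically'' is false: this is precisely the missing idea, and it is the substance of the Zhang--Shen adaptation the paper alludes to. Two further points would still need repair even if that comparison were granted: for a general $(PS)$ sequence one only has $\langle I_{\lambda,p}'(u_n),u_n\rangle=o(\|u_n\|_{X_p^s})$, so the resulting upper bound $pI_{\lambda,p}(t_nu_n)\le pc+o(1)+o(\|u_n\|_{X_p^s})$ is itself unbounded and does not contradict mere divergence of $I_{\lambda,p}(t_nu_n)$ without a rate; and in the critical case $(f'_{2,p})$ the admissible radii are capped by $\alpha_0R^{N/(N-s)}<\alpha^*_{s,N}$, so you obtain only $\liminf_n I_{\lambda,p}(t_nu_n)\ge R_0^p/p$ for a fixed $R_0$ rather than divergence --- the suggestion that one may ``iterate the bound'' does not help, because the cap comes from the fixed constant $\alpha_0$ and the fixed norm $\|Rv_n\|_{X_p^s}=R$, neither of which improves upon iteration.
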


In order to obtain positive solutions for problems \eqref{principal2} and \eqref{C1M_1}, we define  $$I_{\lambda,p}^+:X_p^s\to \mathbb{R}$$ $$I_{\lambda,p}^+(u)=\frac{1}{p}\|u\|^p+\frac{\lambda}{q}\int_{\Omega} \vert u^+\vert^q\dd x-\frac{a}{p}\int_{\Omega}\vert u^+\vert^p\dd x-\int_{\Omega} F(u^{+})\dd x.$$
We have that $I_{\lambda,p}^+\in C^1(X_p^s,\mathbb{R})$ and 
\begin{align*}
\langle (I_{\lambda,p}^+)'(u),h\rangle=&\langle A(u),h\rangle+\lambda\int_{\Omega}\!\vert u^+\vert^{q-1}h\dd x-a\int_{\Omega}\!\vert u^+\vert^{p-1}h\dd x\nonumber\\&-\int_{\Omega} f(u^+)h\dd x
\end{align*}
for all $u,h\in X_p^s$. Observe that a critical point for $I_{\lambda,p}^+$ is a weak solution to the problem 
\begin{equation*}
\left\{\begin{array}{l}
(-\Delta)^{s}_{p} u=-\lambda\vert u^+\vert^{q-1}+ a\vert u^{+}\vert^{p-1}+f(u^+) \;\;\;\;\textrm{in}\;\;\Omega,\\
u=0\;\;\;\;\;\;\textrm{in}\;\;\mathbb{R}\setminus \Omega,
\end{array}
\right.
\end{equation*}
where $u^+= \max\{ u,0\}$. It is not difficult to see that a critical point of $I_{\lambda,p}^+$ is a non-negative function of 

We also define $I_{\lambda,p}^-:X_p^s\to \mathbb{R}$ by
$$I_{\lambda,p}^-(u)=\frac{1}{p}\|u\|^p+\frac{\lambda}{q}\int_{\Omega} \vert u^-\vert^q\dd x-\frac{a}{p}\int_{\Omega}\vert u^-\vert^p\dd x-\int_{\Omega} F(u^{-})\dd x.$$
A critical point for $I_{\lambda,p}^-$ is a weak solution to the problem
\begin{equation*}
\left\{\begin{array}{l}
(-\Delta)^{s}_{p} u=-\lambda\vert u^-\vert^{q-1}+ a\vert u^{-}\vert^{p-1}+f(u^-) \;\;\;\;\textrm{in}\;\;\Omega,\\
u=0\;\;\;\;\;\;\textrm{in}\;\;\mathbb{R}\setminus \Omega,
\end{array}
\right.
\end{equation*}
where $u^-= \min\{ u,0\}$ and a non-positive function in $X_p^s$.

By arguments similar to that used in the proof of Lemma \ref{C2_9}, we obtain
\begin{lemma}\label{C2PE_26.1}If $f$ satisfies $(f_{1,p})$, $(f_{2,p})$, $(f_{3,p})$ and $(f_{4,p})$ then any $(PS)$-sequence for $I_{\lambda,p}^{+}$ or $I_{\lambda,p}^{-}$ is bounded.
\end{lemma}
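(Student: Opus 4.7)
The approach is to mirror the proof of Lemma~\ref{C2_9} on the asymmetric functional $I_{\lambda,p}^+$ (and $I_{\lambda,p}^-$ by a symmetric argument), after first confining any possible unboundedness to the positive part of the sequence. Let $(u_n)\subset X_p^s$ satisfy $I_{\lambda,p}^+(u_n)\to c$ and $(I_{\lambda,p}^+)'(u_n)\to 0$ in $(X_p^s)^*$.

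\textbf{Step 1: the negative part vanishes.} I test $(I_{\lambda,p}^+)'(u_n)$ against $h=u_n^-$. Since $u_n^+\,u_n^-\equiv 0$ a.e.\ and $f(0)=0$ by $(f_{1,p})$, the integrals $\int(u_n^+)^{q-1}u_n^-$, $\int(u_n^+)^{p-1}u_n^-$ and $\int f(u_n^+)u_n^-$ all vanish, leaving $\langle A(u_n),u_n^-\rangle=o(\|u_n^-\|_{X_p^s})$. A short case analysis on the signs of $u_n(x),u_n(y)$ gives the pointwise inequality
\begin{equation*}
|u_n(x)-u_n(y)|^{p-2}(u_n(x)-u_n(y))\bigl(u_n^-(x)-u_n^-(y)\bigr)\ \geq\ |u_n^-(x)-u_n^-(y)|^p,
\end{equation*}
so $\|u_n^-\|_{X_p^s}^p\le o(\|u_n^-\|_{X_p^s})$ and therefore $\|u_n^-\|_{X_p^s}\to 0$.

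\textbf{Step 2: reduction to the analysis of Lemma~\ref{C2_9}.} Assume by contradiction that $\|u_n\|_{X_p^s}\to\infty$ and set $v_n:=u_n/\|u_n\|_{X_p^s}$. Up to a subsequence, $v_n\rightharpoonup v$ in $X_p^s$, with $v_n\to v$ a.e.\ and in $L^r(\Omega)$ for every $r\in[1,\infty)$. Step~1 gives $v\geq 0$. If $v\not\equiv 0$, dividing $|I_{\lambda,p}^+(u_n)|\leq C$ by $\|u_n\|_{X_p^s}^p$ and using $q<p$ yields
\begin{equation*}
\frac{1}{\|u_n\|_{X_p^s}^p}\int_\Omega F(u_n^+)\,dx\ =\ \frac{1}{p}-\frac{a}{p}\|v\|_{L^p(\Omega)}^p+o(1),
\end{equation*}
which is bounded; but on $\{v>0\}$ we have $u_n^+(x)=\|u_n\|_{X_p^s}v_n^+(x)\to +\infty$, and $(f_{4,p})$ together with $F\geq 0$ and Fatou's lemma force the left-hand side to diverge, a contradiction. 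Hence $v\equiv 0$ and $v_n\to 0$ in every $L^r(\Omega)$ with $r<\infty$.

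\textbf{Step 3: the case $v\equiv 0$ (main obstacle).} Following the Zhang--Shen rescaling used in Lemma~\ref{C2_9}, I would pick $t_n\in[0,1]$ with $I_{\lambda,p}^+(t_nu_n)=\max_{t\in[0,1]}I_{\lambda,p}^+(tu_n)$. Using $(f_{2,p})$ (or $(f'_{2,p})$), the Moser--Trudinger inequality (Proposition~\ref{C2P_4.4}) applied to the renormalised sequence $t_nu_n$ (whose norm can be calibrated to stay below the threshold $\alpha_{s,N}^*$), and the vanishing $v_n\to 0$ in $L^r$, one shows $\int_\Omega F((t_nu_n)^+)\,dx\to 0$, so that $I_{\lambda,p}^+(t_nu_n)\to +\infty$. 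This is incompatible with $I_{\lambda,p}^+(u_n)\to c$ if $t_n=1$; if $t_n\in(0,1)$, the stationarity $\langle (I_{\lambda,p}^+)'(t_nu_n),t_nu_n\rangle=0$ combined with the identity $pI_{\lambda,p}^+(t_nu_n)-\langle (I_{\lambda,p}^+)'(t_nu_n),t_nu_n\rangle=\lambda(\tfrac{p}{q}-1)\|t_nu_n^+\|_{L^q}^q+\int[f(t_nu_n^+)t_nu_n^+-pF(t_nu_n^+)]$ and hypothesis $(f_{4,p})$ also produces a contradiction. The delicate part, and the main obstacle, is the calibration of $t_n$ in the critical regime $(f'_{2,p})$ so that Trudinger--Moser can be invoked uniformly; the subcritical case $(f_{2,p})$ is easier since any exponent $\alpha>0$ is admissible. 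The argument for $I_{\lambda,p}^-$ is identical, testing with $u_n^+$ in Step~1 and working with $v\leq 0$.
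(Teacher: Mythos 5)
Your Step 1 (testing with $u_n^-$ and the pointwise inequality that forces $\|u_n^-\|_{X_p^s}\to 0$) and your Step 2 (the Fatou/$(f_{4,p})$ argument excluding $v\not\equiv 0$) are both correct, and Step 1 is in fact the only genuinely new ingredient needed to pass from $I_{\lambda,p}$ to $I_{\lambda,p}^{\pm}$: the paper disposes of this lemma in one sentence by declaring it similar to Lemma \ref{C2_9}, which is itself only a citation to Zhang--Shen, so the overall scheme you follow (blow-up normalisation, then the maximising parameter $t_n$) is exactly the route the paper intends.

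The gap is in Step 3, and it is not where you locate it. The Trudinger--Moser calibration is the easy half: for each fixed $R$ one applies Proposition \ref{C2P_4.4} to $Rv_n$ with $\alpha R^{N/(N-s)}<\alpha_{s,N}^{*}$ and uses $v_n\to 0$ in every $L^r(\Omega)$ to get $\int_\Omega F(Rv_n^+)\,\dd x\to 0$, hence $I_{\lambda,p}^+(t_nu_n)\geq R^p/p+o(1)$ for every $R$; moreover the present lemma only assumes $(f_{2,p})$, so the critical regime you worry about does not arise here. The real problem is the final contradiction when $t_n\in(0,1)$. Your identity reduces matters to bounding $\lambda(\tfrac{p}{q}-1)\|t_nu_n^+\|_{L^q}^q+\int_\Omega[f(t_nu_n^+)t_nu_n^+-pF(t_nu_n^+)]\,\dd x$ from above, but $(f_{4,p})$ says nothing about $H(t)=tf(t)-pF(t)$. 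The standard closure compares $H(t_nu_n^+)$ with $H(u_n^+)$ via the monotonicity of $H$, which follows from $(f_{5,p})$ --- a hypothesis \emph{not} assumed in this lemma --- and even granting that, one still faces the facts that $\langle (I_{\lambda,p}^+)'(u_n),u_n\rangle$ is only $o(\|u_n\|_{X_p^s})$ (not $o(1)$) for a genuine $(PS)$-sequence, and that the concave term contributes an uncontrolled $O(\|u_n\|_{X_p^s}^q)$; this is precisely why such arguments are usually run for Cerami sequences. As written, ``also produces a contradiction'' is an assertion rather than a proof, and the tools you invoke do not deliver it; the step needs either $(f_{5,p})$ (or some comparison property of $tf(t)-pF(t)$), or a Cerami-type strengthening, or a different mechanism altogether.
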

\section{Proof of Theorem \ref{0}}\label{M} 

We start showing a regularization result that will be useful in the proof of our main result.
\begin{lemma}\label{C2P_23.1}
Let $\Omega\subset\mathbb{R}^N$ be a bounded, smooth domain and $f$ a function satisfying $(f_{2,p})$ or $(f'_{2,p})$. Let $(v_\epsilon)_{\epsilon\in(0,1)}\subseteq X_p^s$ be a family of solution to the problem 
\begin{align*}
\left\{\begin{array}{rlll}
(-\Delta)^{s}_{p} u&=&\left(\displaystyle\frac{1}{1-\xi_{\epsilon}}\right)f(u) &\textrm{in}\ \Omega,\\
u&=&0 &\textrm{in}\ \mathbb{R}^N\setminus\Omega,
\end{array}
\right.\end{align*}
where $\xi_\epsilon\leq 0$ and $\|v_\epsilon\|_{X_p^s}\leq 1$, for all  $\epsilon\in(0,1)$. 
Then \[\sup_{\epsilon\in(0,1)}\|v_\epsilon\|_{L^{\infty}(\Omega)}<\infty.\]
\end{lemma}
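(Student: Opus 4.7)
The plan is to combine the Moser--Trudinger inequality with a regularity bootstrap to produce a uniform $L^\infty$ bound. Observe first that since $\xi_\epsilon\leq 0$, the coefficient $\frac{1}{1-\xi_\epsilon}$ is uniformly bounded by $1$, so it suffices to control $\bigl\|f(v_\epsilon)\bigr\|_{L^r(\Omega)}$ uniformly in $\epsilon$ for $r$ sufficiently large, and then to transfer this into a uniform $L^\infty$ bound on $v_\epsilon$.

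Step 1 (uniform higher integrability of $f(v_\epsilon)$). Since $\|v_\epsilon\|_{X_p^s}\leq 1$, Proposition \ref{C2P_4.4} gives
\begin{equation*}
\int_{\Omega}\exp\bigl(\alpha\vert v_\epsilon\vert^{N/(N-s)}\bigr)\dd x\leq H_{\alpha},\qquad 0\leq\alpha<\alpha_{s,N}^{*}.
\end{equation*}
Under $(f_{2,p})$, for every $\alpha>0$ and every $\eta>0$ there is $C_{\alpha,\eta}>0$ with $|f(t)|\leq \eta+C_{\alpha,\eta}\exp(\alpha|t|^{N/(N-s)})$; under $(f'_{2,p})$ the same estimate holds for any $\alpha>\alpha_0$. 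Choose $r>1$ as large as needed below and pick $\alpha$ with $r\alpha<\alpha_{s,N}^{*}$ (permissible in the critical case provided $\alpha_0<\alpha_{s,N}^{*}$, which is the hypothesis tacitly in force). Raising the growth estimate to the power $r$ and applying Moser--Trudinger yields a constant $C=C(r)$, independent of $\epsilon$, such that
\begin{equation*}
\bigl\|f(v_\epsilon)\bigr\|_{L^r(\Omega)}\leq C.
\end{equation*}
Consequently the right-hand side $g_\epsilon:=\frac{f(v_\epsilon)}{1-\xi_\epsilon}$ satisfies $\|g_\epsilon\|_{L^r(\Omega)}\leq C$ uniformly in $\epsilon$.

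Step 2 (bootstrap to $L^\infty$). The functions $v_\epsilon$ solve $(-\Delta)^s_p v_\epsilon=g_\epsilon$ with $g_\epsilon$ uniformly bounded in $L^r(\Omega)$. We now invoke the Moser iteration for the fractional $p$-Laplacian developed by Iannizzotto, Mosconi and Squassina \cite{iannizzoto2}, testing the equation against truncated powers of $v_\epsilon$ and using the fractional Sobolev embedding at each step to enlarge the integrability. Since $N=sp$ and $r$ can be made arbitrarily large, the iteration terminates after finitely many steps and gives a bound of the form
\begin{equation*}
\|v_\epsilon\|_{L^{\infty}(\Omega)}\leq C'\bigl(\|v_\epsilon\|_{L^{p}(\Omega)},\|g_\epsilon\|_{L^{r}(\Omega)}\bigr),
\end{equation*}
with $C'$ a continuous function of its arguments. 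Since $\|v_\epsilon\|_{L^p(\Omega)}$ is controlled by $\|v_\epsilon\|_{X_p^s}\leq 1$ via the compact embedding, both inputs are uniform in $\epsilon$. Hence $\sup_{\epsilon\in(0,1)}\|v_\epsilon\|_{L^{\infty}(\Omega)}<\infty$, as claimed. Once this $L^\infty$ bound is in place, Proposition \ref{a.4} (or Proposition \ref{a.3}) can be reapplied \emph{a posteriori} to give the quantitative $\delta^s$-decay at the boundary, which will be needed later in the paper.

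The principal obstacle is the careful handoff between the two steps: one must choose $r$ large enough that the fractional Moser iteration terminates, while still satisfying the Moser--Trudinger threshold $r\alpha<\alpha^*_{s,N}$. In the critical case $(f'_{2,p})$ this forces a compatibility between $\alpha_0$ and $\alpha^*_{s,N}$, which is the reason the constant $\alpha_0$ (and the parameter $C_r$ in $(f_{7,p})$) have to be calibrated against $\alpha^*_{s,N}$. Keeping all constants independent of $\epsilon$ throughout the iteration is the delicate technical point, but it follows from the uniformity of the Moser--Trudinger bound and the fact that the constants in \cite{iannizzoto2} depend only on $N,p,s,\Omega$.
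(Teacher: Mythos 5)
Your proposal is correct in outline and rests on the same two pillars as the paper's own proof: the uniform Moser--Trudinger bound of Proposition \ref{C2P_4.4} (available precisely because $\|v_\epsilon\|_{X_p^s}\leq 1$) to control the nonlinearity independently of $\epsilon$, and a truncation/iteration scheme to pass from integral estimates to an $L^\infty$ bound. The difference is one of execution. The paper does not first extract a clean uniform $L^r$ bound on $f(v_\epsilon)$ and then cite a regularity theorem; instead it runs a single Stampacchia level-set argument explicitly: it tests with the truncation $T_k(v_\epsilon)$, proves by a case-by-case computation that $\langle A(v_\epsilon),T_k(v_\epsilon)\rangle\geq\|T_k(v_\epsilon)\|_{X_p^s}^p$, interleaves the Moser--Trudinger/H\"older estimate at each level $k$, and derives the recursive inequality $\phi(h)\leq C(h-k)^{-r}\phi(k)^{p/(p-1)}$ for $\phi(k)=\vert\{\vert v_\epsilon\vert\geq k\}\vert$, which forces $\phi$ to vanish at a finite level $d$ independent of $\epsilon$. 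Your version outsources exactly this computation to a black-box Moser iteration; that is legitimate in principle, but note that the statement you would need (uniform $L^r$ data for some $r>1$ implies a uniform $L^\infty$ bound) is not literally Proposition \ref{a.3} of the paper, which assumes $L^\infty$ data, so you would have to either locate the precise $L^r\to L^\infty$ result in \cite{iannizzoto2} or reproduce the truncation argument --- which is what the authors do. One small inaccuracy to fix: in the critical case $(f'_{2,p})$ you cannot take $r$ ``as large as needed'', since the growth estimate $\vert f(t)\vert\leq C\exp(\alpha\vert t\vert^{N/(N-s)})$ is only available for $\alpha>\alpha_0$ and you also need $r\alpha<\alpha^*_{s,N}$, so $r$ is capped by $\alpha^*_{s,N}/\alpha_0$. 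This is harmless here because $N=sp$ means any $r>1$ already suffices for the $L^r\to L^\infty$ step, but the justification ``the iteration terminates since $r$ can be made arbitrarily large'' should be replaced by that correct reason.
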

\begin{proof}
We define, for $0<k\in\mathbb{N}$,
$$T_{k}(s)= \left\{ \begin{array}{rl}
s+k, &\textrm{if}\quad s\leq -k,\\
0, &\textrm{if}\quad -k<s<k,\\
s-k,&\textrm{if}\quad s\geq k
\end{array} \right.$$ 
and
$$\Omega_{k}=\{x\in\Omega\,:\; \vert v_{\epsilon}(x)\vert\geq k\}.$$ 
Observe that $T_{k}(v_{\epsilon})\in X_p^s$ and $\|T_{k}(v_{\epsilon})\|_{X_p^s}^p\leq C^p\|v_{\epsilon}\|_{X_p^s}^p<\infty$ for a constant $C>0$. Taking $T_{k}(v_{\epsilon})$ as a test-function, we obtain
\[\langle A( v_{\epsilon}),T_{k}(v_{\epsilon})\rangle\leq\int_{\Omega} \vert f(v_{\epsilon})\vert \vert T_{k}(v_{\epsilon})\vert \dd x.\]

We claim that
\begin{equation}\label{C2P_23.4}
\langle A(v_{\epsilon}),T_{k}(v_{\epsilon})\rangle_{X_p^s}\leq C\left(\int_{\Omega}\vert T_{k}(v_{\epsilon})\vert^r \dd x\right)^{1/r}\vert\Omega_{k}\vert^{p/r}.
\end{equation}
	
Suppose that $f$ satisfies $(f_{2,p})$. Then, for all $t\in\mathbb{R}$ and $\alpha>0$ we have 
\begin{equation}\label{apriori}\vert f(t)\vert\leq C\exp(\alpha\vert t\vert^{\frac{N}{N-s}})\in L^1(\Omega),\end{equation}
where $C>0$ is a constant. Thus we obtain, for a constant $C_1>0$,
\begin{equation*}
\int_{\Omega} \vert f(v_{\epsilon})\vert \vert T_{k}(v_{\epsilon})\vert \dd x\leq C_1\int_{\Omega} \exp(\alpha \vert v_{\epsilon}\vert^\frac{N}{N-s})\vert T_{k}(v_{\epsilon})\vert \dd x.
\end{equation*} 
If $0<\alpha<\alpha_{s,N}^*$ (see Proposition \ref{C2P_4.4}), we can fix $\theta>1$ so that $0<\theta\alpha<\alpha_{s,N}^*$. Applying the (generalized) Hölder inequality yields \begin{align*}
\int_{\Omega} \vert f(v_{\epsilon})\vert \vert T_{k}(v_{\epsilon})\vert \dd x\leq C_1\left(\int_{\Omega}\exp(\theta\alpha \vert v_{\epsilon}\vert^{\frac{N}{N-s}}) \right)^{1/\theta}\left(\int_{\Omega}\vert T_{k}(v_{\epsilon})\vert^r \dd x\right)^{1/r}\vert\Omega_{k}\vert^{\frac{r-1-\eta}{r}}.
\end{align*}

Since $\|v_{\epsilon}\|_{X_p^s}\leq 1$, it follows from Proposition \ref{C2P_4.4} the existence of a constant $C>0$ such that
\begin{equation*}
\int_{\Omega} \vert f(v_{\epsilon})\vert \vert T_{k}(v_{\epsilon})\vert \dd x\leq C\left(\int_{\Omega}\vert T_{k}(v_{\epsilon})\vert^r \dd x\right)^{1/r}\vert\Omega_{k}\vert^{(r-1-\eta)/r},
\end{equation*}
proving our claim. The proof in the case that $f$ satisfies $(f'_{2,p})$ is analogous. 

Denoting $E=\langle A(v_{\epsilon}),T_{k}(v_{\epsilon})\rangle_{X_p^s}$, we have 
\begin{align*}
E=&\displaystyle{\int_{\mathbb{R}^{2N}}\frac{\vert  v_{\epsilon}(x)-v_{\epsilon}(y)\vert^{p-2}(v_{\epsilon}(x)-v_{\epsilon}(y))(T_{K}(v_{\epsilon})(x)-T_{K}(v_{\epsilon})(y))}{\vert x-y\vert^{N+sp}}}\dd x\dd y\\
=&\int_{\mathbb{R}^N}\left[\int_{v_{\epsilon}(x)\leq -k}\!\!\!\!\!\!\! \frac{\vert  v_{\epsilon}(x)-v_{\epsilon}(y)\vert^{p-2}(v_{\epsilon}(x)-v_{\epsilon}(y))(T_{k}(v_{\epsilon})(x)-T_{k}(v_{\epsilon})(y))}{\vert x-y\vert^{N+sp}}\dd x\right]\dd y\\
&+\int_{\mathbb{R}^N}\left[\int_{\vert v_{\epsilon}(x)\vert<k}\!\!\!\!\!\!\frac{\vert  v_{\epsilon}(x)-v_{\epsilon}(y)\vert^{p-2}(v_{\epsilon}(x)-v_{\epsilon}(y))(T_{k}(v_{\epsilon})(x)-T_{k}(v_{\epsilon})(y))}{\vert x-y\vert^{N+sp}}\dd x\right]\dd y\\
&+\int_{\mathbb{R}^N}\left[\int_{v_{\epsilon}(x)\geq k}\!\!\!\!\!\! \frac{\vert  v_{\epsilon}(x)-v_{\epsilon}(y)\vert^{p-2}(v_{\epsilon}(x)-v_{\epsilon}(y))(T_{k}(v_{\epsilon})(x)-T_{k}(v_{\epsilon})(y))}{\vert x-y\vert^{N+sp}}\dd x\right]\dd y\\
=&\ E_1+E_2+E_3,
\end{align*}
with the integrals $E_i$ defined in the order they appear in the right-hand side.

Let us consider $E_1$. It follows from the definition of $T_{k}$ that
\begin{align*}
E_1=&\int_{v_{\epsilon}(y)\leq -k}\;\left[\int_{v_{\epsilon}(x)\leq -k} \frac{\vert  T_{k}(v_{\epsilon})(x)-T_{k}(v_{\epsilon})(y)\vert^{p}}{\vert x-y\vert^{N+sp}}\dd x\right]\dd y\\
&+\int_{\vert v_{\epsilon}(y)\vert<k}\;\int_{ v_{\epsilon}(x)\leq -k} \frac{\vert  v_{\epsilon}(x)-v_{\epsilon}(y)\vert^{p-2}(v_{\epsilon}(x)-v_{\epsilon}(y))(v_{\epsilon}(x)+k)}{\vert x-y\vert^{N+sp}}\dd x\dd y\\
&+\int_{ v_{\epsilon}(y)\geq k}\;\int_{ v_{\epsilon}(x)\leq -k} \frac{\vert  v_{\epsilon}(x)-v_{\epsilon}(y)\vert^{p-2}(v_{\epsilon}(x)-v_{\epsilon}(y))((v_{\epsilon}(x)-v_{\epsilon}(y) +2k)}{\vert x-y\vert^{N+sp}}\dd x\dd y.
\end{align*}

After some calculation we obtain	
\begin{align*}
E_1\geq & \int_{v_{\epsilon}(y)\leq -k}\;\left[\int_{v_{\epsilon}(x)\leq -k} \frac{\vert  T_{k}(v_{\epsilon})(x)-T_{k}(v_{\epsilon})(y)\vert^{p}}{\vert x-y\vert^{N+sp}}\dd x\right]\dd y\nonumber\\
&+\int_{\vert v_{\epsilon}(y)\vert<k}\;\left[\int_{ v_{\epsilon}(x)\leq -k} \frac{\vert T_{k}(v_{\epsilon})(x)-T_{k}(v_{\epsilon})(x)\vert^{p}}{\vert x-y\vert^{N+sp}}\dd x\right]\dd y\nonumber\\
&+\int_{v_{\epsilon}(y)\geq k}\;\left[\int_{ v_{\epsilon}(x)\leq -k} \frac{\vert  T_{k}(v_{\epsilon})(x)- T_{k}(v_{\epsilon})(y)\vert^{p}}{\vert x-y\vert^{N+sp}}\dd x\right]\dd y\nonumber\\=&\int_{\mathbb{R}^N}\;\left[\int_{ v_{\epsilon}(x)\leq -k} \frac{\vert  T_{k}(v_{\epsilon})(x)- T_{k}(v_{\epsilon})(y)\vert^{p}}{\vert x-y\vert^{N+sp}}\dd x\right]\dd y.
\end{align*}
	
Analogously,
\begin{align*}
E_2\geq\int_{\mathbb{R}^N}\;\left[\int_{\vert v_{\epsilon}(x)\vert<k} \frac{\vert  T_{k}(v_{\epsilon})(x)- T_{k}(v_{\epsilon})(y)\vert^{p}}{\vert x-y\vert^{N+sp}}\dd x\right]\dd y.
\end{align*} 
and
\begin{align*}
E_3\geq\int_{\mathbb{R}^N}\;\left[\int_{ v_{\epsilon}(x)\geq k} \frac{\vert  T_{k}(v_{\epsilon})(x)- T_{k}(v_{\epsilon})(y)\vert^{p}}{\vert x-y\vert^{N+sp}}\dd x\right]\dd y,
\end{align*}
so that	
\begin{align*}
E_1+E_2+E_3\geq \int\limits_{\mathbb{R}^{2N}}\frac{\vert  T_{k}(v_{\epsilon})(x)- T_{k}(v_{\epsilon})(y)\vert^{p}}{\vert x-y\vert^{N+sp}}\dd x\dd y,
\end{align*}
thus yielding	
\begin{equation}\label{CPL2_23.4.16.0}
\langle A(v_{\epsilon}),T_{k}(v_{\epsilon})\rangle\geq\|T_{k}(v_{\epsilon})\|_{X_p^s}^{p}.
\end{equation}
	
The continuous immersion $X_p^s\hookrightarrow L^{r}(\Omega)$ gives us (for a constant $C_1>0$)
\begin{equation}\label{CPL2_23.5000}
C_1 \left(\int_{\Omega}\vert T_{k}(v_{\epsilon})\vert^{r} \dd x\right)^{p/r}\leq\langle A(v_{\epsilon}),T_{k}(v_{\epsilon})\rangle.
	\end{equation}
	
Thus, it follows from \eqref{C2P_23.4} and \eqref{CPL2_23.5000} the existence of  $C>0$ such that 
\begin{equation*}
\int_{\Omega}\vert T_{k}(v_{\epsilon})\vert^r \dd x\leq C\vert\Omega_{k}\vert^{p/(p-1)}.
\end{equation*}
	
Since, for all $s\in\mathbb{R}$, we have $\vert T_k(s)\vert=(\vert s\vert-k)(1-\chi_{[-k,k]}(s))$, we conclude that, if $0<k<h\in\mathbb{N}$, then $\Omega_{h}\subset\Omega_k$. Thus,
\begin{align*}
\int_{\Omega}\vert T_{k}(v_{\epsilon})\vert^r \dd x =&\int_{\Omega_k}(\vert v_{\epsilon}\vert-k)^r
\geq \int_{\Omega_h}(\vert v_{\epsilon}\vert-k)^r
\geq (h-k)^r\vert\Omega_h\vert.
\end{align*}
	
Defining, for $0<k\in\mathbb{N}$, $$\phi(k)=\vert\Omega_k\vert,$$ 
we obtain
\begin{equation*}
\phi(h)\leq C(h-k)^{-r}\phi(k)^{p/(p-1)},\quad 0<k<h\in\mathbb{N}.
\end{equation*}
	
By induction we obtain 
$$\phi(k_n)\leq\frac{\phi(0)}{2^{nr(p-1)}},\quad\textrm{ for all }n\in\mathbb{N},$$ 
from what follows  
$\phi(d)=0$ and, consequently,$$\vert v_\epsilon(x)\vert\leq d\quad\textrm{ a.e. }\,\textrm{ in }\Omega,\,\textrm{ for all } \epsilon\in(0,1).$$
Thus, $$\|v_\epsilon\|_{L^{\infty}(\Omega)}\leq d\;\textrm{ for all }\,\epsilon\in(0,1)$$ and we are done. 
$\hfill\Box$\end{proof}

We specify the spaces $C_{\delta}^0(\overline{\Omega})$ and $C_{\delta}^{0,\alpha}(\overline{\Omega})$. For this, we define $\delta:\overline{\Omega}\to\mathbb{R}^+$ by
$\delta(x)=\textup{dist}(x,\mathbb{R}^N\setminus\Omega)$. Then, if $0<\alpha<1$,
\begin{align*}
C_{s}^{0}(\overline{\Omega})&=\left\{u\in C^{0}(\overline{\Omega})\;:\;\frac{u}{\delta^{s}}\;\textrm{ has a continuous extension to } \overline{\Omega}\right\}\\
C_{s}^{0,\alpha}(\overline{\Omega})&=\left\{u\in C^0(\overline{\Omega})\;:\;\frac{u}{\delta^{s}}\;\textrm{ has a $\alpha$-Hölder extension to } \overline{\Omega}\right\}
\end{align*}
with the respective norms \begin{align*}\|u\|_{0,s}&=\left\|\displaystyle\frac{u}{\delta^{s}}\right\|_{L^{\infty}(\Omega)}\\
\intertext{and}
\|u\|_{\alpha,s}&=\|u\|_{0,\delta}+\displaystyle{\sup_{x,y\in\overline{\Omega},\,x\neq y.}}\frac{\left\vert u(x)/\delta(x)^{s}- u(y)/\delta(y)^{s}\right\vert}{\vert x-y\vert^{\alpha}}.
\end{align*}

\textit{Proof of Theorem \ref{0}.} For $0<\epsilon<1$, let us denote $B_{\epsilon}=\{z\in X_p^s\;:\;\|z\|_{X_p^s}\leq\epsilon\}$. 
By contradiction, suppose that for each  $\epsilon>0$, there exists $u_\epsilon\in B_{\epsilon}$ such that 
\begin{equation}\label{C2P_24.1}
\Phi(u_\epsilon)<\Phi(0).
\end{equation} 

It is not difficult to verify that  $\Phi:B_{\epsilon}\to\mathbb{R}$ is weakly lower semicontinuous. Therefore, 
there exists $v_\epsilon\in B_{\epsilon}$ such that $\displaystyle{\inf_{u\in B_{\epsilon}}}\Phi(u)=\Phi(v_\epsilon)$. It follows from \eqref{C2P_24.1} that $$\Phi(v_\epsilon)=\displaystyle{\inf_{u\in B_{\epsilon}}}\Phi(u)\leq\Phi(u_\epsilon)<\Phi(0).$$

We will show that 
\begin{equation*}
v_{\epsilon}\rightarrow 0 \textrm{ in } C_s^0(\overline{\Omega}) \textrm{ as }~~ \epsilon\rightarrow 0,
\end{equation*}
since this implies that, for $r_1>0$, 
the existence of $ z \in C_s^0(\overline{\Omega})$, such that $\|z\|_{0,s}<r_1$ and 
$\Phi(z)<\Phi(0)$, contradicting \eqref{PH1}.

Since $v_\epsilon$ is a critical point of $\Phi$ in $X^s_p$, by Lagrange multipliers we obtain the existence of $\xi_{\epsilon}\leq 0$ such that
\begin{equation*}
\langle \Phi'(v_{\epsilon}), \phi \rangle = \xi_{\epsilon}\langle v_{\epsilon}, \phi \rangle,\quad \forall\ \phi \in X_p^s.
\end{equation*}

Thus, $v_{\epsilon}$ satisfies 
\begin{align*}
(-\Delta)_{p}^s v_{\epsilon} &= \left(\displaystyle\frac{1}{1-\xi_{\epsilon}}\right)g(v_{\epsilon}) =: g^{\epsilon}(v_{\epsilon})  &\textrm{ in } &\Omega,\\
v_{\epsilon}&=0  &\textrm{ in } &\mathbb{R}\backslash \Omega,
\end{align*}

If $ \|v_{\epsilon}\|_{X_p^s} \leq\epsilon<1$, Proposition \ref{C2P_23.1} show the existence of a constant $C_1>0$, not depending on $\epsilon$, such that
\begin{equation}\label{C2P_24.11.0}
\|v_{\epsilon}\|_{L^{\infty}(\Omega)} \leq C_1.
\end{equation}

Since $\xi_{\epsilon}\leq 0$,  \eqref{apriori} and \eqref{C2P_24.11.0} show that 
\begin{equation*}
\|g^{\epsilon}(v_{\epsilon})\|_{L^{\infty}(0,1)}\leq C_2
\end{equation*}
for some constant $C_2>0$.

Theorem \ref{a.3} then yields $\|v_{\epsilon}\|_{C^{0,\beta}(\overline{\Omega})} \leq C_3$, for $0<\beta\leq s$ and a constant $C_3$ not depending on $\epsilon$.

It follows from Arzelà-Ascoli theorem the existence of a \textit{sequence} $(v_\epsilon)$ such that $v_{\epsilon}\rightarrow 0$ uniformly as $\epsilon\rightarrow 0$. Passing to a subsequence, we can suppose that $v_\epsilon\to 0$ a. e. in $\Omega$ and, therefore, $v_{\epsilon}\to 0$, uniformly in $\overline{\Omega}$.
But now follows from Proposition \ref{a.4} that 
\begin{equation*}
\|v_{\epsilon}\|_{0,\delta}=\biggl\| \frac{v_{\epsilon}}{\delta^{s}} \biggl\|_{L^{\infty}(\Omega)} \leq  C \sup_{x\in (0,1)} |g^{\epsilon}(v_{\epsilon}(x))|
\end{equation*}
for a constant $C>0$. We are done.
$\hfill\Box$

\begin{remark}
Observe that, if $0$ a strict local minimum in $C_{\delta}^{0}(\overline{\Omega})$, then $0$ is also a strict local minimum in $X_p^s$.
\end{remark}

In the case $p=2$, $s=1/2$ and $N=1$ we have the following result, analogous to Lemma \ref{C2P_23.1}: 
\begin{lemma}
Let $(v_\epsilon)_{\epsilon\in(0,1)}\subseteq X$ be a family of solutions to
\begin{align*}
\left\{
\begin{array}{rlll}
(-\Delta)^{1/2} u&=&\left(\displaystyle\frac{1}{1-\xi_{\epsilon}}\right)f(u) &\textrm{in}\ (0,1),\\
u&=&0 &\textrm{in}\;\;\mathbb{R}\setminus(0,1),
\end{array}
\right.
\end{align*}
where $\xi_\epsilon\leq 0$, with $\|v_\epsilon\|_{X}\leq 1$ for all $\epsilon\in(0,1)$ and with $f$ satisfying one of the following conditions:
\begin{enumerate}
\item[$(f_{2,2})$] For all $\alpha>0$, $$\lim_{\vert t\vert\to \infty}\frac{ \vert f(t)\vert}{\exp(\alpha t^2)}=0;$$ 
\item[$(f'_{2,2})$] there exists $\alpha_0>0$ such that 
$$\lim_{\vert t\vert\to \infty}\frac{ \vert f(t)\vert}{\exp(\alpha t^2)}= \left\{ \begin{array}{rc}
\infty, &\quad\;\textrm{if}\quad 0<\alpha<\alpha_0 \\
0, &\textrm{if}\quad \alpha>\alpha_0.
\end{array} \right.$$
\end{enumerate}
Then $$\sup_{\epsilon\in(0,1)}\|v_\epsilon\|_{L^{\infty}}<\infty.$$
\end{lemma}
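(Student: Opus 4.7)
The plan is to follow the Stampacchia/De Giorgi truncation scheme used in the proof of Lemma \ref{C2P_23.1}, adapted to the Hilbert setting $p=2$, $s=1/2$, $N=1$. First I would test the equation satisfied by $v_\epsilon$ with the truncation $T_k(v_\epsilon)\in X$, where $T_k$ is defined exactly as in Lemma \ref{C2P_23.1}. Using the same splitting of $\mathbb{R}^2$ into the regions $\{v_\epsilon\leq -k\}$, $\{|v_\epsilon|<k\}$ and $\{v_\epsilon\geq k\}$ (but with the algebra considerably simplified by $p=2$, since the integrand is just $(v_\epsilon(x)-v_\epsilon(y))(T_k(v_\epsilon)(x)-T_k(v_\epsilon)(y))$), I would obtain the pointwise bound
\begin{equation*}
(v_\epsilon(x)-v_\epsilon(y))(T_k(v_\epsilon)(x)-T_k(v_\epsilon)(y)) \geq (T_k(v_\epsilon)(x)-T_k(v_\epsilon)(y))^2,
\end{equation*}
which integrates to $\langle v_\epsilon, T_k(v_\epsilon)\rangle_X \geq \|T_k(v_\epsilon)\|_X^2$.

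Next I would estimate the right-hand side. Since $\xi_\epsilon\leq 0$, the factor $(1-\xi_\epsilon)^{-1}\in(0,1]$ is harmless, so it suffices to bound $\int_{(0,1)}|f(v_\epsilon)||T_k(v_\epsilon)|\,\mathrm{d}x$. Either of $(f_{2,2})$ or $(f'_{2,2})$ yields $|f(t)|\leq C\exp(\alpha t^2)$ for all $t\in\mathbb{R}$ and some $\alpha>0$ that can be chosen arbitrarily small in the subcritical case, and strictly less than $\pi$ in the critical case. Picking $\theta>1$ with $\theta\alpha<\pi$ and applying the generalized Hölder inequality together with Proposition 1.2 (for $\|v_\epsilon\|_X\leq 1$), one gets
\begin{equation*}
\int_{\Omega_k}|f(v_\epsilon)||T_k(v_\epsilon)|\,\mathrm{d}x \leq C\left(\int_0^1 |T_k(v_\epsilon)|^r\,\mathrm{d}x\right)^{1/r} |\Omega_k|^{(r-1-\eta)/r},
\end{equation*}
for any $r$ large and some $\eta>0$, where $\Omega_k=\{x\in(0,1):|v_\epsilon(x)|\geq k\}$.

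Combining these two estimates with the continuous embedding $X\hookrightarrow L^r(0,1)$ (valid for every finite $r$) yields
\begin{equation*}
\int_0^1 |T_k(v_\epsilon)|^r\,\mathrm{d}x \leq C\, |\Omega_k|^{\frac{2(r-1-\eta)}{r-2}}.
\end{equation*}
Exactly as in Lemma \ref{C2P_23.1}, using $\int_0^1|T_k(v_\epsilon)|^r\,\mathrm{d}x \geq (h-k)^r|\Omega_h|$ for $0<k<h$, and denoting $\phi(k)=|\Omega_k|$, one obtains a recursion of the form $\phi(h)\leq C(h-k)^{-r}\phi(k)^{\gamma}$ with $\gamma>1$. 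Stampacchia's iteration lemma then gives $\phi(d)=0$ for some $d>0$ independent of $\epsilon$, hence $\|v_\epsilon\|_{L^\infty}\leq d$ uniformly in $\epsilon\in(0,1)$.

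The main obstacle, compared to the general proof, is just to choose $\theta$ and $r$ consistently so that $\theta\alpha<\pi$ (the sharp threshold of Proposition 1.2), that $|\Omega_k|$ appears with an exponent strictly greater than $1$ after the embedding step, and that the iteration exponent $\gamma=r/(r-2)$ exceeds $1$; all three are satisfied by taking $r$ sufficiently large. The rest of the argument is mechanical and identical in structure to Lemma \ref{C2P_23.1}, with all $p$-Laplacian algebra replaced by the Hilbertian identity $\langle v_\epsilon, T_k(v_\epsilon)\rangle_X \geq \|T_k(v_\epsilon)\|_X^2$.
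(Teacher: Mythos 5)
Your proposal is correct and follows essentially the same route as the paper: the paper simply refers back to the De Giorgi--Stampacchia truncation argument of Lemma \ref{C2P_23.1} and notes that for $p=2$ the coercivity estimate reduces to $\langle v_{\epsilon},T_{k}(v_{\epsilon})\rangle_{X}=\|T_{k}(v_{\epsilon})\|_{X}^{2}$ (obtained there by translation invariance of the Gagliardo form, which is equivalent to your pointwise monotonicity inequality). The remaining steps --- the exponential bound on $f$, the generalized H\"older estimate with the Moser--Trudinger inequality, the embedding $X\hookrightarrow L^{r}(0,1)$, and the iteration on $\phi(k)=|\Omega_{k}|$ --- match the paper's argument, up to immaterial bookkeeping of the exponents.
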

The proof follows the same reasoning of Lemma \ref{C2P_23.1}. Observe that the proof of the estimate \eqref{CPL2_23.4.16.0} is simpler:
\begin{align*}
\langle v_{\epsilon},T_{k}(v_{\epsilon})\rangle_{X}&=\langle v_{\epsilon}\pm k,T_{k}(v_{\epsilon})\rangle_{X}\mp\langle k,T_k(v_\epsilon)\rangle_{X}\\
&=\langle T_{k}(v_{\epsilon}),T_{k}(v_{\epsilon})\rangle_{X}=\|T_{k}(v_{\epsilon})\|_{X}^2.
\end{align*}

The proof of Theorem \ref{C2_24} is analogous to that of Theorem \ref{0}.
\section{Proof of Theorem \ref{3}}\label{subcritico}

In this section we deal with existence and multiplicity of solutions to the problem  \eqref{principal2} when $f$ has subcritical growth. 

The proof of Theorem \ref{3} will be given in 3 subsections. In the first subsection, we will obtain a positive solution by applying the Mountain Pass Theorem. Analogously, in the second subsection we will obtain a negative solution. In the last subsection, a third solution will be obtained by the Linking Theorem and we conclude the proof of Theorem \ref{3}. 
\subsection{Positive solution for the  functional $I_{\lambda,p}$}
\begin{lemma}\label{C3PE_6.1}
Suppose that $f$ satisfies $(f_{1,p})$, $(f_{2,p})$, $(f_{3,p})$ and $(f_{4,p})$. Then, for any $\lambda>0$, the functional $I_{\lambda,p}^{+}$ satisfies the $(PS)$ condition at any level. 

The same result is valid for the functional $I_{\lambda,p}$. 
\end{lemma}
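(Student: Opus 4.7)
\textit{Proof plan for Lemma \ref{C3PE_6.1}.}
The boundedness half of the Palais--Smale property is already in hand from Lemma \ref{C2PE_26.1}, so the task reduces to extracting a strongly convergent subsequence from a bounded $(PS)_c$ sequence. Let $(u_n)\subset X_p^s$ satisfy $I_{\lambda,p}^+(u_n)\to c$ and $(I_{\lambda,p}^+)'(u_n)\to 0$ in $(X_p^s)^*$. By Lemma \ref{C2PE_26.1}, $(u_n)$ is bounded. Passing to a subsequence, $u_n\rightharpoonup u$ in $X_p^s$; the compact embedding $X_p^s\hookrightarrow L^r(\Omega)$ for every $r\in[1,\infty)$ then yields $u_n\to u$ in $L^r(\Omega)$ and $u_n\to u$ a.e.\ in $\Omega$, hence also $u_n^+\to u^+$ in the same senses.

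The strategy is to apply Lemma \ref{C2P_4.6}, so it suffices to prove $\langle A(u_n),u_n-u\rangle\to 0$. From the expression for $(I_{\lambda,p}^+)'$ we write
\begin{align*}
\langle A(u_n),u_n-u\rangle=&\,\langle (I_{\lambda,p}^+)'(u_n),u_n-u\rangle-\lambda\!\int_\Omega |u_n^+|^{q-1}(u_n-u)\,\dd x\\
&+a\!\int_\Omega |u_n^+|^{p-1}(u_n-u)\,\dd x+\int_\Omega f(u_n^+)(u_n-u)\,\dd x.
\end{align*}
The first term vanishes in the limit because $(u_n-u)$ is bounded in $X_p^s$ and $(I_{\lambda,p}^+)'(u_n)\to 0$. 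The two middle integrals tend to zero by H\"older's inequality together with the strong $L^q$- and $L^p$-convergence of $u_n^+$ to $u^+$.

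The only delicate piece is the nonlinear term $\int_\Omega f(u_n^+)(u_n-u)\,\dd x$, and this is where the Moser--Trudinger inequality (Proposition \ref{C2P_4.4}) enters. From $(f_{3,p})$ near the origin and $(f_{2,p})$ at infinity, for each $\alpha>0$ and each $\varepsilon>0$ there exists $C_{\varepsilon,\alpha}>0$ such that
$$|f(t)|\leq\varepsilon|t|^{p-1}+C_{\varepsilon,\alpha}\exp\!\left(\alpha|t|^{N/(N-s)}\right),\quad t\in\mathbb{R}.$$
Plugging this in and using H\"older on each piece, the $\varepsilon|t|^{p-1}$ part is controlled by $\|u_n-u\|_{L^p}\to 0$. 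For the exponential part, a generalized H\"older inequality with some exponent $\theta>1$ gives
$$\int_\Omega \exp\!\left(\alpha|u_n^+|^{N/(N-s)}\right)|u_n-u|\,\dd x\leq\left(\int_\Omega \exp\!\left(\theta\alpha|u_n^+|^{N/(N-s)}\right)\dd x\right)^{1/\theta}\|u_n-u\|_{L^{\theta'}(\Omega)}.$$
Setting $M:=\sup_n\|u_n\|_{X_p^s}$ and normalizing, the key is to choose $\alpha>0$ so small that $\theta\alpha M^{N/(N-s)}<\alpha_{s,N}^*$; Proposition \ref{C2P_4.4} then bounds the first factor uniformly in $n$, while the second factor vanishes by the compact embedding.

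Putting these limits together yields $\langle A(u_n),u_n-u\rangle\to 0$, and Lemma \ref{C2P_4.6} gives $u_n\to u$ in $X_p^s$. The argument for $I_{\lambda,p}$ is identical except that $u_n^+$ is replaced by $u_n$ throughout, and the same for $I_{\lambda,p}^-$ with $u_n^-$. The main obstacle, as expected, is the exponential term: the whole proof rests on the subcriticality in $(f_{2,p})$, which lets us choose $\alpha$ arbitrarily small so that $\theta\alpha M^{N/(N-s)}$ falls below the Moser--Trudinger threshold $\alpha_{s,N}^*$; in the critical case $(f_{2,p}')$ this flexibility is lost and a different argument (energy level restriction) would be required.
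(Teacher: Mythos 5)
Your proposal is correct and follows essentially the same route as the paper: boundedness from Lemma \ref{C2PE_26.1}, reduction to $\langle A(u_n),u_n-u\rangle\to 0$ via Lemma \ref{C2P_4.6}, and control of the nonlinear term by choosing $\alpha$ small enough that $\theta\alpha M^{N/(N-s)}<\alpha_{s,N}^*$ so Proposition \ref{C2P_4.4} applies uniformly along the bounded sequence. The extra $\varepsilon|t|^{p-1}$ splitting you include is harmless but not needed; the paper works directly with the pure exponential bound.
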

\begin{proof}
Let $(u_n)\subset X_p^s$ be a $(PS)$-sequence for $I^+_{\lambda,p}$. By Lemma \ref{C2PE_26.1}, there exists $u_0\in X_p^s$ such that 
$u_n\rightharpoonup u_0\;\;\textrm{ in }\  X_p^s$.

We can also suppose that
\begin{equation*}
u_n\to u_0\;\;\textrm{ in }\ L^r(\Omega)\;\;\textrm{ for } r\geq 1\ \textrm{ and }\ u_n(x)\to u_0(x)\ \textrm{ a.e. in }\ \Omega.
\end{equation*} 

For $1<q<2\leq p$, by applying Hölder's inequality we obtain
\begin{align*}
\int_{\Omega}\vert u_n^+\vert^{q-2}u_n^+(u_n-u_0) \to 0\quad\textrm{and}\quad \int_{\Omega}\vert u_n^+\vert^{p-2}u_n^+(u_n-u_0) \to 0.
\end{align*}
Observe that 
\begin{equation*}
u_n-u_0\rightharpoonup 0,\ \textrm{ implies }\ \langle (I_{\lambda,p}^+)'(u_n),u_n-u_0\rangle\to 0.
\end{equation*}
It follows that
\begin{align*}
\langle A(u_n),u_n-u_0\rangle=&\langle (I_{\lambda,p}^{+})'(u_n),u_n-u_0\rangle-\lambda\int_{\Omega}\vert u_n^{+}\vert^{q-2}u_n^{+}(u_n-u_0)\\&+a\int_{\Omega}\vert u_n^{+}\vert^{p-2}u_n^{+}(u_n-u_0)+\displaystyle\int_{\Omega} f(u_n^+)(u_n-u_0)\\
=&\displaystyle\int_{\Omega}f(u_n^+)(u_n-u_0)+o(1).
\end{align*}

Taking $0<\alpha<\displaystyle\frac{\alpha_{s,N}^*}{rM^\frac{N}{N-s}}$, the inequality 
$$\vert f(t)\vert\leq C\exp(\alpha\vert t\vert^{\frac{N}{N-s}}),\ \textrm{ for all }\ t\in\mathbb{R}.$$
implies the existence of $C>0$ such that
\begin{align*}
\langle A(u_n),u_n-u_0\rangle&\leq \displaystyle\int_{\Omega}\vert f(u_n^+)\vert\vert u_n-u_0\vert+o(1)\\
&\leq\displaystyle C\int_{\Omega}\exp({\alpha \vert u_n\vert^\frac{N}{N-s}})\vert u_n-u_0\vert+o(1)
\end{align*}
Thus, since $0<r\alpha M^\frac{N}{N-s}<\alpha_{s,N}^*$, it follows from Hölder's inequality
$$\langle A(u_n),u_n-u_0\rangle\leq C C_{1} \left(\int_{\Omega}\vert u_n-u_0\vert^{r/(r-1)}\right)^{(r-1)/r}+o(1)$$
for a positive constant $C_1$. Thus
$$\langle A(u_n),u_n-u_0\rangle\to 0 $$
and we conclude $u_n\to u_0$ in $X_p^s$ as a consequence of Lemma \ref{C2P_4.6}.

In the case of the functional $I_{\lambda,p}$ the proof is analogous.
$\hfill\Box$\end{proof}

The next results will be useful when proving the geometric conditions of the Mountain Pass Theorem. We define
\begin{align*}
J_{\lambda,p}(u)
&:= I_{\lambda,p}(u)-\frac{1}{p}\|u\|_{X_p^s}^p= \frac{\lambda}{q}\int_{\Omega}\vert u\vert^q\dd x-\frac{a}{p}\int_{\Omega}\vert u\vert^p\dd x-\int_{\Omega}F(u)\dd x
\intertext{and}
J_{\lambda,p}^+(u)
&:= I_{\lambda,p}^+(u)-\frac{1}{p}\|u\|_{X_p^s}^p= \frac{\lambda}{q}\int_{\Omega}\vert u^+\vert^q\dd x-\frac{a}{p}\int_{\Omega}\vert u^+\vert^p\dd x-\int_{\Omega}F(u^+)\dd x.
\end{align*}
\begin{lemma}[Subcritical and critical cases]\label{C3PE_4}
Suppose that $a>0$ and that $f$ satisfies $(f_{3,p})$. Then, the trivial solution $u=0$ is a strict local minimum of  $J_{\lambda,p}^{+}$ for all $\lambda>0$.
\end{lemma}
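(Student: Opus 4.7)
The plan is to exploit the exponent gap: since $1<q<2\leq p$, the positive lower-order term $\tfrac{\lambda}{q}\int_\Omega |u^+|^q\,dx$ dominates the two negative contributions $-\tfrac{a}{p}\int_\Omega |u^+|^p\,dx$ and $-\int_\Omega F(u^+)\,dx$ whenever $u^+$ is pointwise small. The natural topology in which to measure smallness is the $C_s^0(\overline{\Omega})$-topology, which controls $\|u\|_{L^\infty(\Omega)}$ by the uniform boundedness of $\delta^s$ on $\overline{\Omega}$, since $\|u\|_{L^\infty(\Omega)}\leq \|\delta^s\|_{L^\infty(\Omega)}\|u\|_{0,s}$.

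First I would invoke $(f_{3,p})$: for any $\epsilon>0$ there exists $\eta_\epsilon>0$ such that $|f(t)|\leq \epsilon|t|^{p-1}$ whenever $|t|\leq \eta_\epsilon$, and integration yields $|F(t)|\leq \tfrac{\epsilon}{p}|t|^p$ in the same range. Choose $\epsilon>0$ once and for all, then restrict to $u\in X_p^s$ with $\|u\|_{0,s}\leq \eta_\epsilon/\|\delta^s\|_{L^\infty(\Omega)}$, so that $\|u^+\|_{L^\infty(\Omega)}\leq \eta_\epsilon$. Plugging the $L^\infty$ bound on $F$ into the definition of $J_{\lambda,p}^+$ gives
$$
J_{\lambda,p}^+(u)\;\geq\;\frac{\lambda}{q}\int_\Omega |u^+|^q\,dx-\frac{a+\epsilon}{p}\int_\Omega |u^+|^p\,dx.
$$

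Next I would use the pointwise interpolation $|u^+|^p\leq \|u^+\|_{L^\infty(\Omega)}^{p-q}|u^+|^q$ (valid since $p>q$) to factor out $\int_\Omega |u^+|^q\,dx$:
$$
J_{\lambda,p}^+(u)\;\geq\;\Bigl(\frac{\lambda}{q}-\frac{a+\epsilon}{p}\,\|u^+\|_{L^\infty(\Omega)}^{p-q}\Bigr)\int_\Omega |u^+|^q\,dx.
$$
By further shrinking the $C_s^0(\overline{\Omega})$-neighborhood of $0$, the parenthesized coefficient can be made strictly positive (its limit as $\|u\|_{0,s}\to 0$ is $\lambda/q>0$). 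Consequently $J_{\lambda,p}^+(u)\geq 0=J_{\lambda,p}^+(0)$ on this neighborhood, with strict inequality whenever $u^+\not\equiv 0$; this is the required strict local minimum.

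The only mild obstacle is selecting the correct ambient topology. Because the critical fractional embedding $X_p^s\hookrightarrow L^\infty(\Omega)$ fails in the limiting regime $N=sp$, one cannot bound $\|u^+\|_{L^\infty(\Omega)}$ directly from $\|u\|_{X_p^s}$, so the minimum is established in $C_s^0(\overline{\Omega})$. This is exactly the setting in which Theorem \ref{0} applies, and the latter will subsequently upgrade the $C_s^0$-local minimum to a local minimum in $X_p^s$, which is how the lemma will be used in the mountain-pass step of Section \ref{subcritico}.
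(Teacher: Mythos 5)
Your argument is correct and follows essentially the same route as the paper: bound $F(u^+)$ by a small multiple of $|u^+|^p$ using $(f_{3,p})$ and the $L^\infty$ control coming from the $C_s^0(\overline{\Omega})$-norm, then absorb the $p$-power terms into the $q$-power term via $|u^+|^p\leq \|u^+\|_{L^\infty(\Omega)}^{p-q}|u^+|^q$, and finally invoke Theorem \ref{0} to pass to $X_p^s$. The only caveat --- that strictness fails for $u$ with $u^+\equiv 0$, so the minimum is strict only against the positive part --- is present in the paper's own proof as well, so your proposal matches it in substance.
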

\begin{proof}
According to Theorem \ref{0}, it suffices to show that $u=0$ is a strict local minimum for $J_{\lambda,p}^{+}$ in  $C^0_{\delta}(\overline{\Omega})$. Condition $(f_{3,p})$ implies, for some $\omega>0$,
\begin{equation*}
\displaystyle\lim_{\vert t\vert\to 0}\frac{ F(t)}{ \vert t\vert^p}=0\quad\Rightarrow\quad \vert F(t)\vert< \vert t\vert^p,\;\textrm{ for all } 0<\vert t\vert\leq\omega.
\end{equation*}

Consider $u\in (C^0_\delta(\overline{\Omega})\cap X_p^s)\setminus\{0\}$. Taking  com $\|u\|_{0,\delta}$ small enough, we have  $0<\vert u^+\vert<\omega$, since $\vert u^+\vert\leq M\|u\|_{0,\delta}$ for some $M>0$. Thus,
\begin{align}\label{C3PE_5}
J_{\lambda,p}^+(u)
&=\frac{\lambda}{q}\int_{\Omega}\vert u^+\vert^q\dd x-\left(\frac{a}{p}+1\right)\int_{\Omega}\vert u^+\vert^p\dd x
\end{align}

For $1<q<p$, we have $\vert u^{+}\vert^{p-q}\leq (k_1)^{p-q}\| u\|^{p-q}_{0,\delta}$ for some constant $k_1>0$. Thus, $$\int_{\Omega}\vert u^{+}\vert^p \dd x\leq(k_1)^{p-q}\|u\|^{p-q}_{0,\delta}\int_{\Omega}\vert u^+\vert^q \dd x$$
and we obtain 
\begin{equation}\label{C3PE_700}
-\left(\frac{a}{p}+1\right)\displaystyle\int_{\Omega}\vert u^+\vert^p\dd x \geq-\left(\frac{a}{p}+1\right)(k_1)^{p-q}\| u\|_{0,\delta}^{p-q}\displaystyle\int_{\Omega}\vert u^+\vert^q\dd x,
\end{equation}
since $\dfrac{a}{p}+1>0$.  

So, \eqref{C3PE_5} and \eqref{C3PE_700} yield
\begin{align*}
J_{\lambda,p}^+(u)\geq
&\left[ \frac{\lambda}{q}-\left(\frac{a}{p}+1\right)(k_1)^{p-q}\| u\|_{0,\delta}^{p-q}\right] \int_{\Omega}\vert u^+\vert^q \dd x.
\end{align*}
By defining  
$$R= \dfrac{(\lambda/q)^{1/(p-q)} }{k_1\left(\dfrac{a}{p}+1\right)^{1/(p-q)}},$$ 
if $\;\| u\|_{0,\delta}<R$, then 
$$J_{\lambda,p}^+(u)> 0=J_{\lambda,p}^+(0),\;\forall\;0<\| u\|_{0,\delta}<R,$$ completing the proof.
$\hfill\Box$\end{proof}
\begin{remark}
The same result holds for $J_{\lambda,p}$.
\end{remark}

\begin{lemma}[Subcritical and critical cases]\label{C2PE_34}
Suppose that $a>0$ and $f$ satisfies $(f_{4,p})$. Then, for a fixed $\Lambda>0$, there exists $t_0=t_0(\Lambda)$ such that $$I_{\lambda,p}^+(t\varphi_1)<0,$$ for all $t\geq t_0$ and $0<\lambda<\Lambda$.
\end{lemma}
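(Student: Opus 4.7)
The plan is to evaluate $I_{\lambda,p}^{+}(t\varphi_1)$ explicitly and then extract the dominant behavior as $t\to\infty$. Since $\varphi_1>0$ a.e.\ in $\Omega$, for $t>0$ we have $(t\varphi_1)^{+}=t\varphi_1$. Using the normalizations $\|\varphi_1\|_{L^p(\Omega)}=1$ and $\|\varphi_1\|_{X_p^s}^p=\lambda_1$ introduced in Section~\ref{prel}, I obtain
\begin{equation*}
I_{\lambda,p}^{+}(t\varphi_1)=\frac{t^p}{p}\bigl(\lambda_1-a\bigr)+\frac{\lambda t^q}{q}\int_{\Omega}\varphi_1^q\,\dd x-\int_{\Omega}F(t\varphi_1)\,\dd x.
\end{equation*}

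Next I divide by $t^p$ and analyze each term as $t\to\infty$. The first term is a fixed constant. Because $q<p$, the second term satisfies
\begin{equation*}
0<\frac{\lambda t^{q-p}}{q}\int_{\Omega}\varphi_1^q\,\dd x\leq \frac{\Lambda}{q\,t^{p-q}}\int_{\Omega}\varphi_1^q\,\dd x\longrightarrow 0,
\end{equation*}
where the bound is independent of $\lambda\in(0,\Lambda)$. For the last term I use Fatou's lemma, exploiting that $F\geq 0$ by $(f_{1,p})$. Writing
\begin{equation*}
\frac{F(t\varphi_1(x))}{t^p}=\frac{F(t\varphi_1(x))}{\vert t\varphi_1(x)\vert^{p}}\,\varphi_1(x)^p
\end{equation*}
and noting that $\varphi_1(x)>0$ a.e.\ in $\Omega$, hypothesis $(f_{4,p})$ forces the integrand to tend to $+\infty$ a.e., so
\begin{equation*}
\liminf_{t\to\infty}\int_{\Omega}\frac{F(t\varphi_1)}{t^p}\,\dd x\geq \int_{\Omega}\liminf_{t\to\infty}\frac{F(t\varphi_1)}{t^p}\,\dd x=+\infty.
\end{equation*}

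Combining these three estimates gives $I_{\lambda,p}^{+}(t\varphi_1)/t^p\to -\infty$ as $t\to\infty$, and the convergence is uniform in $\lambda\in(0,\Lambda)$ because only the (controlled) second term depends on $\lambda$. Hence there exists $t_0=t_0(\Lambda)$ such that $I_{\lambda,p}^{+}(t\varphi_1)<0$ for every $t\geq t_0$ and every $\lambda\in(0,\Lambda)$. The only delicate point is securing the \emph{uniform-in-$\lambda$} conclusion, which is handled by replacing $\lambda$ with the fixed upper bound $\Lambda$ in the nonnegative $q$-power term before passing to the limit; after that, the superquadratic-in-$|t|^p$ growth of $F$ supplied by $(f_{4,p})$ dominates the remaining terms.
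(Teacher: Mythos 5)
Your proposal is correct and follows essentially the same route as the paper: evaluate $I_{\lambda,p}^{+}$ along the ray $t\varphi_1$, bound the only $\lambda$-dependent term by replacing $\lambda$ with $\Lambda$, and let the superlinear growth of $F$ from $(f_{4,p})$ dominate the $t^p$ terms. The only difference is technical: the paper extracts from $(f_{4,p})$ the pointwise bound $F(t)\geq M|t|^p-C_M$ with $M>\lambda_1/p$ and chooses $t_0$ explicitly, whereas you reach the same conclusion via Fatou's lemma (which is legitimate here since $F\geq 0$ by $(f_{1,p})$, a hypothesis both proofs implicitly use).
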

\begin{proof}It follows from $(f_{4,p})$ that, fixed $M>0$, there exists $C_M>0$ such that \begin{equation}\label{C5P_10.0}F (t)\geq M |t|^p - C_M.\end{equation} Thus, if $M>\dfrac{\lambda_1}{p}$, denoting by $\varphi_1$ the positive eigenfunction associated with the eigenvalue $\lambda_1$, with $\|\varphi_1\|_{L^p(\Omega)}=1$, we have
\begin{align*}
I_{\lambda,p}^+(t\varphi_1)
&\leq\frac{\vert t\vert^p}{p}\|\varphi_1\|_X^p+\frac{\vert t\vert^q\lambda}{q}\int_{\Omega}\vert\varphi_1\vert^q \dd x-\int_{\Omega}F(t\varphi_1)\dd x\nonumber\\
&\leq \frac{\vert t\vert^p}{p}\|\varphi_1\|_X^p+\frac{\vert t\vert^q\lambda}{q}\int_{\Omega}\vert \varphi_1\vert^q \dd x-\int_{\Omega}\left(M \vert t\vert^p\vert \varphi_1\vert^p+C_M\right)\dd x\nonumber\\
&\leq\frac{\lambda_1 \vert t\vert^p}{p}\int_{\Omega} \vert\varphi_1\vert^p \dd x +\frac{\vert t\vert^q\lambda}{q}\int_{\Omega}\vert\varphi_1\vert^q \dd x-M\vert t\vert^p\int_{\Omega}\vert\varphi_1\vert^p\dd x+C_M\vert\Omega\vert\nonumber \\
&= t^p\left[\frac{\lambda}{q}\frac{1}{t^{p-q}}\int_\Omega\vert\varphi_1\vert^q \dd x+\frac{1}{t^p} C_M\vert\Omega\vert-\left(M-\frac{\lambda_1}{p}\right)\right].
\end{align*}

For a fixed $\Lambda>0$ we now choose $t_0=t_0(\Lambda)>0$ such that
$$\frac{\Lambda}{q}\frac{1}{t_0^{p-q}}\int_\Omega\varphi_1^q \dd x+\frac{C_M}{t_0^p}\vert\Omega\vert - \left(M-\frac{\lambda_1}{p}\right)<0.$$

So, for $t\geq t_0$ and $\lambda<\Lambda$ we have 
\begin{align*}\frac{\lambda}{q}\frac{1}{t^{p-q}}\int_\Omega\varphi_1^q \dd x+\frac{C_M}{t^p}\vert\Omega\vert - \left(M-\frac{\lambda_1}{p}\right)  &\leq\frac{\Lambda}{q}\frac{1}{t_0^{p-q}}\int_\Omega\varphi_1^q \dd x+\frac{C_M}{t_0^p}\vert\Omega\vert - \left(M-\frac{\lambda_1}{p}\right)\\
&<0.
\end{align*}
The result follows.
$\hfill\Box$\end{proof}

\begin{proposition}
Suppose that, for $a>0$, $f$ satisfies $(f_{1,p})$, $(f_{2,p})$, $(f_{3,p})$  $(f_{4,p})$. Then, the subcritical problem  \eqref{principal2} has at least one positive solution, for all $0<\lambda<\Lambda$, where $\Lambda>0$ is arbitrary.
\end{proposition}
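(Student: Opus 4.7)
The plan is to obtain the positive solution as a Mountain Pass critical point of the truncated functional $I_{\lambda,p}^+$, whose nontrivial critical points are automatically non-negative weak solutions of \eqref{principal2}.

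The Palais-Smale condition is handed to us for free: Lemma \ref{C3PE_6.1} tells us that $I_{\lambda,p}^+$ satisfies $(PS)$ at every level, so no further work is needed on that side. The real task is to set up the mountain-pass geometry. By Lemma \ref{C3PE_4}, combined with Theorem \ref{0} and the remark following it, $u=0$ is a strict local minimum of $I_{\lambda,p}^+$ in $X_p^s$, so there exist $r,\rho>0$ with $\inf\{I_{\lambda,p}^+(u):\|u\|_{X_p^s}=r\}\ge\rho>0$. On the other hand, Lemma \ref{C2PE_34} provides $t_0=t_0(\Lambda)$, which I would take large enough to also ensure $\|t_0\varphi_1\|_{X_p^s}>r$, with $I_{\lambda,p}^+(t_0\varphi_1)<0$ for every $0<\lambda<\Lambda$. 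The classical Mountain Pass Theorem then produces a critical point $u_\lambda\in X_p^s$ at the minimax level $c_\lambda=\inf_{\gamma\in\Gamma}\max_{t\in[0,1]}I_{\lambda,p}^+(\gamma(t))\ge\rho>0$, where $\Gamma$ is the class of continuous paths in $X_p^s$ joining $0$ to $t_0\varphi_1$; this critical point is nontrivial because $c_\lambda>0=I_{\lambda,p}^+(0)$.

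To upgrade $u_\lambda$ to a positive solution of \eqref{principal2}, I would test the equation $(I_{\lambda,p}^+)'(u_\lambda)=0$ against $u_\lambda^-\in X_p^s$. The three nonlinear integrals vanish because $u_\lambda^+\,u_\lambda^-\equiv 0$, so the identity reduces to $\langle A(u_\lambda),u_\lambda^-\rangle=0$. A standard monotonicity estimate for the fractional $p$-Laplacian (of the form $\langle A(u),u^-\rangle\ge\|u^-\|_{X_p^s}^p$) then forces $u_\lambda^-\equiv 0$, so $u_\lambda\ge 0$ is a nontrivial non-negative weak solution of \eqref{principal2}. Strict positivity inside $\Omega$ would then follow from the strong maximum principle for the fractional $p$-Laplacian.

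The main obstacle is the sphere lower bound. Since this proposition will later be applied inside Theorem \ref{3} under $a\ge\lambda_1$, a naive Poincar\'e-type estimate on $\frac{1}{p}\|u\|_{X_p^s}^p-\frac{a}{p}\|u^+\|_p^p$ is not positive for small $\|u\|_{X_p^s}$ and thus cannot by itself produce the needed sphere barrier. This is precisely why Lemma \ref{C3PE_4} is formulated and proved in the stronger $C_s^0(\overline{\Omega})$-topology, where the elementary bound $|u^+|^{p-q}\le C\|u\|_{0,s}^{p-q}$ subordinates the troublesome $\|u^+\|_p^p$-term to the dominant $\|u^+\|_q^q$-term with $1<q<p$, and why Theorem \ref{0} is crucially invoked to transfer this strict local-minimum structure from $C_s^0(\overline{\Omega})$ to the energy space $X_p^s$ where the Mountain Pass scheme actually runs.
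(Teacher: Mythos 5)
Your proposal is correct and follows essentially the same route as the paper, whose proof is a one-line application of the Mountain Pass Theorem to $I_{\lambda,p}^+$ citing exactly the three ingredients you use: Lemma \ref{C3PE_6.1} for the $(PS)$ condition, Lemma \ref{C3PE_4} together with Theorem \ref{0} for the local minimum at the origin, and Lemma \ref{C2PE_34} for the point of negative energy. Your added details (the sphere barrier, the sign argument via testing with $u_\lambda^-$ and the elementary inequality $\langle A(u),u^-\rangle\geq\|u^-\|_{X_p^s}^p$) only make explicit what the paper leaves implicit.
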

\begin{proof}
It follows immediately from the Mountain Pass Theorem, as consequence of Lemmas \ref{C3PE_6.1}, \ref{C3PE_4} and \ref{C2PE_34}. 
$\hfill\Box$\end{proof}
\subsection{Negative solution for the functional $I_{\lambda,p}$}
The Palais-Smale condition is obtained by following the reasoning given in the proof of Lemma \ref{C3PE_6.1}.

For $u\in X_p^s$, by defining
\begin{align*}
J_{\lambda,p}^-(u)
&:= I_{\lambda,p}^-(u)-\frac{1}{p}\|u\|_{X_p^s}^p= \frac{\lambda}{q}\int_{\Omega}\vert u^-\vert^q\dd x-\frac{a}{p}\int_{\Omega}\vert u^-\vert^p\dd x-\int_{\Omega}F(u^-)\dd x,
\end{align*}
we obtain the result analogous to Lemma \ref{C3PE_4}. 

Mimicking the proof of Lemma \ref{C2PE_34}, we obtain the second condition of the Mountain Pass Theorem.
Thus, the negative solution follows, as before, from the Mountain Pass Theorem. 
\subsection{A third solution}
In order to obtain the geometric conditions of the Linking Theorem, we define
\begin{equation*}
\lambda^{*}=\inf\left\{\|u\|_{X_p^s}^{p}\; : \; u\in W, \ \|u\|_{L^{p}(\Omega)}^{p}=1\right\},
\end{equation*}
where $$W=\left\{ u\in X_p^s\; :\; \langle A(\varphi_1),u\rangle=0\right\},$$
with $\varphi_1$ the first autofunction, positive and normalized, of $(-\Delta)_p^s$. The proof of the next result is simple.
\begin{proposition}
$X_p^s=W\oplus\textup{span}\{\varphi_1\}$.
\end{proposition}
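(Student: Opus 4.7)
The plan is to exhibit an explicit projection onto $\text{span}\{\varphi_1\}$ along $W$ and check that it is well-defined and unique, using that although the operator $A$ is nonlinear in its argument, the map $v\mapsto\langle A(\varphi_1),v\rangle$ is \emph{linear} and continuous in $v\in X_p^s$ (as is clear from the definition \eqref{operador} with $u=\varphi_1$ fixed).

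First I would record the key numerical fact. Because $\varphi_1$ is the (positive, $L^p$-normalized) eigenfunction associated with $\lambda_1$, testing the eigenvalue identity $\langle A(\varphi_1),v\rangle=\lambda_1\int_\Omega|\varphi_1|^{p-2}\varphi_1 v\,\dd x$ against $v=\varphi_1$ yields
\[
\langle A(\varphi_1),\varphi_1\rangle=\lambda_1\int_\Omega|\varphi_1|^p\,\dd x=\lambda_1>0.
\]
In particular, $\varphi_1\notin W$, and the linear functional $T(v):=\langle A(\varphi_1),v\rangle$ on $X_p^s$ is not identically zero, so its kernel $W$ is a closed proper subspace.

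Next, given an arbitrary $u\in X_p^s$, I would set
\[
t:=\frac{\langle A(\varphi_1),u\rangle}{\langle A(\varphi_1),\varphi_1\rangle}=\frac{T(u)}{\lambda_1},
\qquad w:=u-t\varphi_1.
\]
By linearity of $T$ we have $T(w)=T(u)-tT(\varphi_1)=0$, so $w\in W$, and by construction $u=w+t\varphi_1$, which proves that $X_p^s=W+\text{span}\{\varphi_1\}$.

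For the directness of the sum, I would suppose $w+t\varphi_1=0$ with $w\in W$ and $t\in\mathbb{R}$. Applying the linear functional $T$ to this identity and using $T(w)=0$ and $T(\varphi_1)=\lambda_1>0$ gives $t\lambda_1=0$, so $t=0$, and hence $w=0$. This shows $W\cap\text{span}\{\varphi_1\}=\{0\}$, completing the decomposition $X_p^s=W\oplus\text{span}\{\varphi_1\}$. There is no real obstacle here; the only point that might look delicate is that $A$ is a nonlinear operator, but this is dispelled as soon as one notes that the definition of $W$ only uses $A(\varphi_1)$ fixed and tests it against $v$, which is a linear operation in $v$.
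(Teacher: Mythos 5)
Your argument is correct and complete: the paper itself offers no proof (it merely remarks that ``the proof of the next result is simple''), and your construction --- observing that $W$ is the kernel of the continuous linear functional $T(v)=\langle A(\varphi_1),v\rangle$, that $T(\varphi_1)=\|\varphi_1\|_{X_p^s}^p=\lambda_1>0$, and then splitting $u=(u-t\varphi_1)+t\varphi_1$ with $t=T(u)/\lambda_1$ --- is exactly the standard argument the authors presumably have in mind. The one point worth isolating, which you handle correctly, is that $A$ itself is nonlinear but $v\mapsto\langle A(\varphi_1),v\rangle$ is linear and bounded in $v$, so $W$ is a closed hyperplane and the direct-sum decomposition is immediate.
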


Following ideas of Alves, Carrião and Miyagaki \cite{alves} and Capozzi, Fortunato and  Palmieri \cite{Capozzi}, we obtain the next result.
\begin{proposition}
$\lambda_1<\lambda^{*}$.
\end{proposition}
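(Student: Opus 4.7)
The plan is to verify the trivial inequality $\lambda_1 \leq \lambda^{*}$ and then rule out equality by showing that $\lambda^{*}$ is attained and invoking simplicity of the first eigenvalue of $(-\Delta)_p^s$.

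First, since $W \subset X_p^s$ and both infima are taken, on the $L^p$-unit sphere, of the same functional $u \mapsto \|u\|_{X_p^s}^p = [u]_{W^{s,p}(\mathbb{R}^N)}^p$ (recalling that the Gagliardo seminorm is the chosen norm on $X_p^s$), the inequality $\lambda_1 \leq \lambda^{*}$ follows at once from the definitions.

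Next, I would show that $\lambda^{*}$ is attained. Pick a minimizing sequence $(u_n)\subset W$ with $\|u_n\|_{L^p(\Omega)}=1$ and $\|u_n\|_{X_p^s}^p \to \lambda^{*}$. Boundedness in $X_p^s$ yields, along a subsequence, $u_n \rightharpoonup u^{*}$ weakly in $X_p^s$, and the compact embedding $X_p^s \hookrightarrow L^p(\Omega)$ recalled in Section~\ref{prel} gives $u_n \to u^{*}$ strongly in $L^p(\Omega)$, whence $\|u^{*}\|_{L^p(\Omega)}=1$. Since $W$ is the kernel of the bounded linear functional $u \mapsto \langle A(\varphi_1), u\rangle$ on $X_p^s$, it is weakly closed, so $u^{*}\in W$. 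Weak lower semicontinuity of the norm then yields $\|u^{*}\|_{X_p^s}^p \leq \lambda^{*}$, and the reverse inequality follows from the very definition of $\lambda^{*}$ together with $u^{*}\in W$ and $\|u^{*}\|_{L^p(\Omega)}=1$. Hence $u^{*}$ is a minimizer.

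Finally, suppose for contradiction that $\lambda^{*}=\lambda_1$. Then $u^{*}$ is also an $L^p$-normalized minimizer for $\lambda_1$, and so it is a first eigenfunction of $(-\Delta)_p^s$ on $X_p^s$. Invoking the simplicity of the first eigenvalue of the fractional $p$-Laplacian (Franzina--Palatucci), one concludes that $u^{*} = \pm\varphi_1$. However, by linearity of $\langle A(\varphi_1),\cdot\rangle$, we have $\langle A(\varphi_1), \pm\varphi_1\rangle = \pm\|\varphi_1\|_{X_p^s}^p = \pm\lambda_1 \neq 0$, so neither $\varphi_1$ nor $-\varphi_1$ lies in $W$, contradicting $u^{*}\in W$. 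Therefore $\lambda_1 < \lambda^{*}$.

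The main obstacle is the appeal to simplicity of the first eigenvalue for the fractional $p$-Laplacian, which is a substantive external result (and which the paper already uses implicitly by referring to \emph{the} first eigenfunction $\varphi_1$); the remaining ingredients—weak compactness, weak closedness of $W$ as the kernel of a bounded linear functional, and weak lower semicontinuity of the norm in the uniformly convex space $X_p^s$—are standard.
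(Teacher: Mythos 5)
Your proposal is correct and follows essentially the same route as the paper: the trivial inequality from $W\subset X_p^s$, a minimizing sequence in $W$ combined with compact embedding into $L^p(\Omega)$ and weak lower semicontinuity, the (implicit in the paper) simplicity of the first eigenvalue to identify the limit with a multiple of $\varphi_1$, and the contradiction $\langle A(\varphi_1),\varphi_1\rangle=\|\varphi_1\|_{X_p^s}^p\neq 0$ against membership in $W$. Your version is in fact slightly more careful, since you explicitly record that the limit is $L^p$-normalized before invoking the characterization of first eigenfunctions.
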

\begin{proof}Of course
$$\lambda_1=\inf\bigg\{\|u\|_{X_p^s}^p\; :\; \|u\|_{L^p(\Omega)}^p=1\bigg\}\leq\inf\bigg\{\|u\|_{X_p^s}^p\; :\; u\in W\;\textrm{ e }\;\|u\|_{L^p(\Omega)}^p=1\bigg\}=\lambda^*.$$ 

Suppose that $\lambda_1=\lambda^{*}$. It follows the existence of a sequence $(u_n)\subset W$ such that $$\|u_n\|_{L^p(\Omega)}^p=1\quad \textrm{and}\quad \displaystyle{\lim_{n\to\infty}}\|u_n\|_{X_p^s}^p=\lambda^{*}=\lambda_1.$$ 

Since $(u_n)$ is bounded in $X_p^s$, passing to a subsequence if necessary, there exists $u\in X_p^s$ such that
$$\begin{cases}
u_n\rightharpoonup u\;\textrm{ in }\; X_p^s,\\
u_n\to u \textrm{ in }\; L^{q}(\Omega),\  1\leq q\leq p\\
u_n(x)\to u(x)\textrm{ a.e. in } \Omega.
\end{cases}
$$
Since 
\begin{align*}
\lambda_1\leq\|u\|_{X_p^s}^p &\leq\displaystyle{\liminf_{k\to\infty}}\|u_k\|^p=\displaystyle{\liminf_{k\to\infty}}\lambda_{k}=\lambda_1.
\end{align*}
we conclude that $u=t\varphi_1$ for some $t\neq 0$.

But $\langle A(\varphi_1),u_n\rangle\to \langle A(\varphi_1),u\rangle$. Since $(u_n)\subset W$, we have $\langle A(\varphi_1),u_n\rangle=0$, thus implying  $\langle A(\varphi_1),u\rangle=0$. It follows $t\|\varphi_1\|_{X_p^s}^p=0$, thus implying $t=0$, and we have reached a contradiction.
$\hfill\Box$\end{proof}

\begin{lemma}\label{C2PE_51} 
If $a<\lambda^{*}$, then there exist $\beta, \rho>0$ such that $I_{\lambda,p}(u)\geq\beta$ for all $u\in W$ such that $\|u\|_{X_p^s}=\rho$.
\end{lemma}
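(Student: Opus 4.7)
\medskip

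\noindent\emph{Proof plan for Lemma \ref{C2PE_51}.}
The starting point is the variational characterization of $\lambda^{*}$, which yields the coercivity-type inequality
\begin{equation*}
\int_{\Omega}|u|^{p}\,\dd x\ \leq\ \frac{1}{\lambda^{*}}\,\|u\|_{X_{p}^{s}}^{p},\qquad u\in W.
\end{equation*}
Since the term $\frac{\lambda}{q}\int_{\Omega}|u|^{q}\,\dd x$ is nonnegative, it can simply be dropped from $I_{\lambda,p}(u)$. Thus, for $u\in W$,
\begin{equation*}
I_{\lambda,p}(u)\ \geq\ \frac{1}{p}\|u\|_{X_{p}^{s}}^{p}-\frac{a}{p}\int_{\Omega}|u|^{p}\,\dd x-\int_{\Omega}F(u)\,\dd x\ \geq\ \frac{1}{p}\left(1-\frac{a}{\lambda^{*}}\right)\|u\|_{X_{p}^{s}}^{p}-\int_{\Omega}F(u)\,\dd x,
\end{equation*}
and the hypothesis $a<\lambda^{*}$ ensures the coefficient $\kappa_{0}:=\frac{1}{p}(1-a/\lambda^{*})$ is strictly positive.

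The next step is to estimate $\int_{\Omega}F(u)\,\dd x$. Combining $(f_{3,p})$, which gives $|F(t)|\leq\varepsilon|t|^{p}$ for $|t|$ small, with the subcritical growth condition $(f_{2,p})$ (or $(f'_{2,p})$ in the critical case), I get, for every $\varepsilon>0$, $\alpha>0$ (resp.\ $\alpha>\alpha_{0}$) and every fixed $r>p$, a constant $C_{\varepsilon,\alpha,r}>0$ such that
\begin{equation*}
|F(t)|\ \leq\ \varepsilon|t|^{p}+C_{\varepsilon,\alpha,r}\,|t|^{r}\exp\!\bigl(\alpha|t|^{\frac{N}{N-s}}\bigr)\qquad\text{for every }t\in\mathbb{R}.
\end{equation*}

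Now I want to apply Proposition \ref{C2P_4.4} to the exponential part. For $u\in W$ with $\|u\|_{X_{p}^{s}}=\rho$, write $u=\rho v$ with $\|v\|_{X_{p}^{s}}=1$; then $\exp(\alpha|u|^{N/(N-s)})=\exp(\alpha\rho^{N/(N-s)}|v|^{N/(N-s)})$. Fix $\theta>1$ and choose $\rho>0$ sufficiently small so that $\theta\alpha\rho^{N/(N-s)}<\alpha_{s,N}^{*}$ (in the critical case, pick $\alpha$ slightly larger than $\alpha_{0}$ and then $\rho$ accordingly). Hölder's inequality and Proposition \ref{C2P_4.4} then give
\begin{equation*}
\int_{\Omega}|u|^{r}\exp\!\bigl(\alpha|u|^{\frac{N}{N-s}}\bigr)\dd x\ \leq\ \Bigl(\int_{\Omega}|u|^{r\theta'}\dd x\Bigr)^{1/\theta'}H_{\theta\alpha}^{1/\theta}\ \leq\ C_{1}\,\|u\|_{X_{p}^{s}}^{r},
\end{equation*}
where the last estimate uses the continuous embedding $X_{p}^{s}\hookrightarrow L^{r\theta'}(\Omega)$. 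Combining with the $\varepsilon|t|^{p}$ part, which is absorbed via $\int|u|^{p}\leq\tfrac{1}{\lambda^{*}}\|u\|_{X_{p}^{s}}^{p}$ (again using $u\in W$), we obtain
\begin{equation*}
I_{\lambda,p}(u)\ \geq\ \Bigl(\kappa_{0}-\frac{\varepsilon}{\lambda^{*}}\Bigr)\|u\|_{X_{p}^{s}}^{p}-C_{\varepsilon,\alpha,r}\,C_{1}\,\|u\|_{X_{p}^{s}}^{r}.
\end{equation*}

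Choosing $\varepsilon>0$ small enough so that $\kappa_{1}:=\kappa_{0}-\varepsilon/\lambda^{*}>0$, and then $\rho>0$ small enough so that simultaneously $\theta\alpha\rho^{N/(N-s)}<\alpha_{s,N}^{*}$ and $C_{\varepsilon,\alpha,r}C_{1}\rho^{r-p}\leq\kappa_{1}/2$ (possible because $r>p$), I conclude
\begin{equation*}
I_{\lambda,p}(u)\ \geq\ \frac{\kappa_{1}}{2}\,\rho^{p}=:\beta>0\qquad\text{for all }u\in W\text{ with }\|u\|_{X_{p}^{s}}=\rho.
\end{equation*}

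The main technical obstacle is the coupled choice of $\alpha$, $\theta$ and $\rho$: one must ensure that $\theta\alpha\rho^{N/(N-s)}<\alpha_{s,N}^{*}$ (so Moser--Trudinger applies with a uniform constant) while simultaneously $\rho$ is small enough to kill the $\rho^{r}$ term against $\rho^{p}$. In the critical case $(f'_{2,p})$ this must be balanced against the fact that $\alpha$ cannot be taken arbitrarily small, which forces $\rho$ to be smaller; the positivity of the coefficient $\kappa_{0}$ (granted by $a<\lambda^{*}$) is what ultimately allows this balance to succeed. $\hfill\Box$
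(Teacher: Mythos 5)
Your proposal is correct and follows essentially the same route as the paper: split $F$ into a small multiple of $|t|^p$ plus an exponential-weighted power $|t|^r$ with $r>p$, use the variational characterization of $\lambda^{*}$ on $W$ to control $\int_\Omega|u|^p$, apply H\"older together with Proposition \ref{C2P_4.4}, and conclude by comparing $\rho^{p}$ against $\rho^{r}$. Your explicit rescaling $u=\rho v$ and the coupled choice of $\alpha$, $\theta$, $\rho$ in the critical case is a slightly more careful rendering of the same argument, which the paper writes out only under $(f_{2,p})$.
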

\begin{proof}
Take $\theta>p$ and $0<\alpha<\alpha_{s,N}^*$ (see Proposition \ref{C2P_4.4}).  It follows from $(f_{2,p})$ and $(f_{3,p})$ the existence of $0\leq\mu<\lambda^{*}-a$ and $C>0$ such that $$F(t)\leq \frac{\mu}{p}t^P+C \exp(\alpha \vert t\vert^\frac{N}{N-s})\vert t\vert^\theta,\;\textrm{ for all } t\in\mathbb{R}.$$
Thus, if $u\in W$ and $\|u\|_{X_p^s}\leq 1$, then
\begin{align*}
I_{\lambda,p}(u)
&\geq \frac{\|u\|_{X_p^s}^p}{p} -\frac{a}{p}\int_{\Omega}\vert u\vert^p\dd x-\int_{\Omega}\left[\frac{\mu\vert u\vert^p}{p}+C \exp(\alpha \vert u\vert^\frac{N}{N-s})\vert u\vert^\theta \right]\dd x\\
&=\frac{\|u\|_{X_p^s}^p}{p}-\left(\frac{a}{2}+\frac{\mu}{2}\right)\int_{\Omega}\vert u\vert^p\dd x-C\int_{\Omega} \exp(\alpha \vert u\vert^\frac{N}{N-s})\vert u\vert^\theta \dd x.
\end{align*}
The definition of $\lambda^{*}$ yields
\begin{equation*} 
I_{\lambda,p}(u)\geq \frac{\|u\|_{X_p^s}^p}{p}-(a+\mu)\frac{\|u\|_{X_p^s}^p}{p\lambda^{*}}-C\int_{\Omega}\exp(\alpha u^2)\vert u\vert^\theta \dd x.
\end{equation*}

Take $r>1$ so that $0<r\alpha<\alpha_{s,N}^*$. Hölder's inequality then implies 
\begin{align*}
I_{\lambda,p}(u)&\geq\left(\frac{1}{p}-\frac{a+\mu}{p\lambda^{*}}\right)\|u\|_{X_p^s}^p-C\left(\int_{\Omega} \exp(\alpha r\vert u\vert^\frac{N}{N-s})\dd x\right)^{1/r}\left(\int_{\Omega} \vert u\vert^{\theta r'} \dd x\right)^{1/r'}\\
&=\frac{1}{p}\left(1-\frac{a+\mu}{\lambda^{*}}\right)\|u\|_{X_p^s}^p-C\left(\int_{\Omega} \exp(\alpha r \vert u\vert^\frac{N}{N-s}) \dd x\right)^{1/r}\|u\|_{L^{r'\theta}(\Omega)}^\theta.
\end{align*}

Since $\|u\|_{X_p^s}\leq 1$, the continuous immersion $X_p^s\hookrightarrow L^{r'q}(\Omega)$ and Proposition  \ref{C2P_4.4} guarantee the existence of $C_1>0$ such that $$I_{\lambda,p}(u)\geq\frac{1}{p}\left(1-\frac{a+\mu}{\lambda^{*}}\right)\|u\|_{X_p^s}^2-C_1\|u \|_{X_p^s}^\theta.$$

Observe that $\theta>p$ and $1-\displaystyle\frac{a+\mu}{\lambda^{*}} >0$. Thus, for $\rho>0$ small enough and  $\|u\|_{X_p^s}=\rho$, we have $$I_{\lambda,p}(u)\geq \rho^p\left\{\frac{1}{p}\left(1-\frac{a+\mu}{\lambda^{*}}\right)-C_1\rho^{\theta-p}\right\}>0.$$ 

The proof is complete by defining  $\beta=\rho^p\left\{\displaystyle\frac{1}{p}\left(1-\frac{a+\mu}{\lambda^{*}}\right)-C_1\rho^{\theta-p}\right\}$. $\hfill\Box$\end{proof}

\begin{lemma}\label{C2PE_52}  
Suppose that $f$ satisfies $(f_{4,p})$. If $Y\subset X_p^s$ is a subspace with $\dim Y<\infty$, then 
$$\displaystyle\lim_{ u\in Y, \Vert u\Vert_{X_p^s}\to \infty}I_{\lambda,p}(u)=-\infty.$$
\end{lemma}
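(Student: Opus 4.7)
The plan is to upgrade the asymptotic condition $(f_{4,p})$ to a pointwise bound valid on all of $\mathbb{R}$, then exploit the equivalence of norms on the finite-dimensional subspace $Y$ to make the negative $p$-power term dominate. Concretely, because $F(t)/|t|^p \to +\infty$, for every $M>0$ there is $R_M>0$ with $F(t) \geq M|t|^p$ whenever $|t|\geq R_M$. On the compact set $\{|t|\leq R_M\}$ the continuous function $F$ is bounded below, so we obtain the global lower bound
$$F(t) \geq M|t|^p - C_M \quad \text{for all } t\in\mathbb{R},$$
for a constant $C_M>0$. This is inequality \eqref{C5P_10.0} already used for the ``geometry'' step of the Mountain Pass lemma, and it is the only information from $(f_{4,p})$ I will need; the key point is that $M$ can be chosen as large as desired.

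Substituting this bound into the definition of $I_{\lambda,p}$ gives, for every $u\in X_p^s$,
$$I_{\lambda,p}(u) \leq \frac{1}{p}\|u\|_{X_p^s}^p + \frac{\lambda}{q}\|u\|_{L^q(\Omega)}^q - \left(\frac{a}{p}+M\right)\|u\|_{L^p(\Omega)}^p + C_M|\Omega|.$$
Now I specialize to $u\in Y$. Since $\dim Y<\infty$, all norms on $Y$ are equivalent; in particular there exist positive constants $c_1=c_1(Y),\;c_2=c_2(Y)$ such that
$$c_1\|u\|_{X_p^s}^p \leq \|u\|_{L^p(\Omega)}^p \quad \text{and} \quad \|u\|_{L^q(\Omega)}^q \leq c_2\|u\|_{X_p^s}^q, \qquad u\in Y.$$
Choose $M$ large enough that $\bigl(\tfrac{a}{p}+M\bigr)c_1 > \tfrac{1}{p}$ (the value of $a$ plays no role here since $M$ is free). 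Setting $\delta := \bigl(\tfrac{a}{p}+M\bigr)c_1 - \tfrac{1}{p} > 0$, the previous estimate becomes
$$I_{\lambda,p}(u) \leq -\delta\,\|u\|_{X_p^s}^p + \frac{\lambda c_2}{q}\|u\|_{X_p^s}^q + C_M|\Omega|, \qquad u\in Y.$$

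Since $1<q<2\leq p$, we have $q<p$, and the negative $p$-power term dominates the positive $q$-power term as $\|u\|_{X_p^s}\to\infty$. Hence the right-hand side tends to $-\infty$ uniformly in the direction of $u\in Y$, establishing the claim. There is no real obstacle in this argument; the only small thing to verify is that the constants $c_1,c_2$ indeed depend only on $Y$ (not on $u$), which is immediate from finite-dimensionality, and that $M$ may be chosen before the equivalence constants $c_1,c_2$ come into play — which is fine, as the order of quantification is ``fix $Y$, hence $c_1,c_2$; then pick $M$; then let $\|u\|_{X_p^s}\to\infty$''.
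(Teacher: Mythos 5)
Your argument is correct and is essentially the same as the paper's: both derive the global bound $F(t)\geq M|t|^p-C_M$ from $(f_{4,p})$, use equivalence of norms on the finite-dimensional subspace $Y$, and choose $M$ large enough that the coefficient of $\|u\|_{X_p^s}^p$ becomes negative, the lower-order $q$-power term being harmless since $q<p$. Your version is in fact marginally cleaner in that you keep the $q$-power term as such rather than absorbing it into the $p$-power coefficient, and you retain the $a$-term explicitly instead of discarding it.
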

\begin{proof}
Since all norms in $Y$ are equivalent, there exist $C_1>0$ and $C_2>0$ such that, for all $y\in Y$ holds
\begin{equation}\label{C2_530}
 \|u\|_{X_p^s}^p\leq C_1\|u\|_{L^p(\Omega)}^p\quad\textrm{and}\quad\Vert u\Vert_{L^q(\Omega)}^q\leq C_2\|u\|_{X_p^s}^q.
\end{equation}

Now, by applying \eqref{C5P_10.0} for $M>C_1\left(\displaystyle\frac{1}{p}+\frac{\lambda C_2}{q}\right)$ and \eqref{C2_530}, we obtain 
\begin{align*}
I_{\lambda,p}(u)&\leq\frac{\|u\|_{X_p^s}^p}{p}+\frac{\lambda}{q}\int_{\Omega}\vert u\vert^q\dd x-\int_{\Omega}(M\vert u\vert^p-C_M)\dd x\\
&\leq\frac{\|u\|_{X_p^s}^p}{p}+\frac{\lambda}{q}\|u\|_{L^q(\Omega)}^q-M\Vert u\Vert_{L^p(\Omega)}^p+C_M\vert\Omega\vert\\
&\leq\|u\|_{X_p^s}^p\left(\frac{1}{p}+\frac{\lambda C_2}{q}-\frac{M}{C_1}\right)+C_M\vert\Omega\vert.   
\end{align*}

The choice of $M$ implies the result. 
$\hfill\Box$\end{proof}

\begin{lemma}\label{C3PE_9}
If $f$ satisfies $(f_{4,p})$ and if $\lambda_1\leq a<\lambda^{*}$, then there exists $\eta=\eta(\lambda)>0$ such that $$I_{\lambda,p}(u)\leq \eta(\lambda)\quad\textrm{and}\quad\displaystyle{\lim_{\lambda\to 0}}\;\eta(\lambda)=0,\quad\textrm{ for all }\ u\in\textup{span}\{\varphi_1\}.$$
\end{lemma}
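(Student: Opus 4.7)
\emph{Proof proposal for Lemma \ref{C3PE_9}.} Write $u=t\varphi_{1}$ with $t\in\mathbb{R}$. Using the normalizations $\|\varphi_{1}\|_{X_{p}^{s}}^{p}=\lambda_{1}$ and $\|\varphi_{1}\|_{L^{p}(\Omega)}^{p}=1$, the functional on $\text{span}\{\varphi_{1}\}$ becomes
\begin{equation*}
I_{\lambda,p}(t\varphi_{1})=\frac{\lambda_{1}-a}{p}|t|^{p}+\frac{\lambda|t|^{q}}{q}\|\varphi_{1}\|_{L^{q}(\Omega)}^{q}-\int_{\Omega}F(t\varphi_{1})\,\dd x.
\end{equation*}
Since $a\geq\lambda_{1}$ and $F\geq 0$ by $(f_{1,p})$, both the first and third terms are non-positive, which yields the crude but useful bound $I_{\lambda,p}(t\varphi_{1})\leq\tfrac{\lambda|t|^{q}}{q}\|\varphi_{1}\|_{L^{q}(\Omega)}^{q}$. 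This is satisfactory for $|t|$ bounded but unbounded in $t$, so the plan is to use $(f_{4,p})$ to kill large values of $|t|$.

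The key step is to extract from $(f_{4,p})$ a uniform (in $\lambda$) control of $\int_{\Omega}F(t\varphi_{1})$ for $|t|$ large. Since $F(s)/|s|^{q}=(F(s)/|s|^{p})|s|^{p-q}\to\infty$ as $|s|\to\infty$, for every $M>0$ there is $R=R(M)>0$ such that $F(s)\geq M|s|^{q}$ whenever $|s|\geq R$. As $\varphi_{1}>0$ a.e.\ in $\Omega$, the sets $\{|t\varphi_{1}|\geq R\}=\{|\varphi_{1}|\geq R/|t|\}$ increase monotonically to $\Omega$ as $|t|\to\infty$, so by monotone convergence
\begin{equation*}
\int_{\Omega}F(t\varphi_{1})\,\dd x\;\geq\; M|t|^{q}\int_{\{|\varphi_{1}|\geq R/|t|\}}|\varphi_{1}|^{q}\,\dd x\;\longrightarrow\; M|t|^{q}\|\varphi_{1}\|_{L^{q}(\Omega)}^{q}.
\end{equation*}
Choosing $M=2$, we obtain a threshold $T_{0}>0$, depending only on $\varphi_{1}$ and $F$ (hence independent of $\lambda$), such that $\int_{\Omega}F(t\varphi_{1})\,\dd x\geq|t|^{q}\|\varphi_{1}\|_{L^{q}(\Omega)}^{q}$ for all $|t|\geq T_{0}$.

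Combining the two regimes: for $|t|\geq T_{0}$ and $\lambda\leq q$,
\begin{equation*}
I_{\lambda,p}(t\varphi_{1})\leq\left(\frac{\lambda}{q}-1\right)|t|^{q}\|\varphi_{1}\|_{L^{q}(\Omega)}^{q}\leq 0;
\end{equation*}
while for $|t|\leq T_{0}$ the initial crude bound gives $I_{\lambda,p}(t\varphi_{1})\leq\tfrac{\lambda T_{0}^{q}}{q}\|\varphi_{1}\|_{L^{q}(\Omega)}^{q}$. Setting
\begin{equation*}
\eta(\lambda):=\frac{\lambda T_{0}^{q}}{q}\|\varphi_{1}\|_{L^{q}(\Omega)}^{q}
\end{equation*}
then gives $I_{\lambda,p}(u)\leq\eta(\lambda)$ for every $u\in\text{span}\{\varphi_{1}\}$, with $\eta(\lambda)\to 0$ as $\lambda\to 0$.

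The main obstacle I expect is the step showing that $\int_{\Omega}F(t\varphi_{1})$ grows faster than $|t|^{q}$ \emph{uniformly} as $|t|\to\infty$. The subtlety is that $\varphi_{1}$ vanishes on $\partial\Omega$ (behaving like $\delta^{s}$), so $t\varphi_{1}(x)$ is not uniformly large on all of $\Omega$ even for $|t|$ huge. The fact that $\varphi_{1}>0$ a.e.\ combined with the super-$p$ growth of $F$ furnished by $(f_{4,p})$ is precisely what makes the monotone-convergence step work, and it covers uniformly the borderline case $a=\lambda_{1}$ (where the $|t|^{p}$ term gives no help at all) as well as the strict case $a>\lambda_{1}$.
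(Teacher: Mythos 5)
Your proof is correct and follows essentially the same strategy as the paper: discard the non-positive term $\frac{\lambda_1-a}{p}|t|^p$, use $(f_{4,p})$ to force $I_{\lambda,p}(t\varphi_1)\le 0$ for $|t|$ large, and bound $I_{\lambda,p}$ by a multiple of $\lambda$ on the remaining compact range of $t$. The only (harmless) variation is in the large-$|t|$ step, where the paper uses the global bound $F(t)\ge M|t|^p-C_M$ together with equivalence of norms on $\mathrm{span}\{\varphi_1\}$, whereas you use $F(s)\ge M|s|^q$ for $|s|\ge R$ plus monotone convergence; both proofs also share the same implicit restriction to a bounded range of $\lambda$ (your $\lambda\le q$), which is all that is needed since the lemma is only applied for $\lambda$ small.
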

\begin{proof}Since $u\in\textrm{span}\{\varphi_1\}$, we have 
\begin{align*}
I_{\lambda,p}(u)
&\leq\left( \frac{\lambda_1-a}{p}\right)\int_{\Omega} u^p\dd x +\frac{\lambda}{q}\Vert u\Vert_{L^q(\Omega)}^q-\int_\Omega F(u)\dd x.  
\end{align*}

But $\lambda_{1}\leq a<\lambda^{*}$, the continuous immersion and \eqref{C5P_10.0} imply that
\begin{align*}
I_{\lambda,p}(u)&\leq\frac{\lambda}{q}\Vert u\Vert_{L^q(\Omega)}^q-\int_{\Omega} F(u)\dd x\\
&\leq\frac{\lambda}{q}K_{q}\Vert u\Vert_{X_p^s}^q-\int_{\Omega} F(u)\dd x\\
&\leq \frac{\lambda K_q}{q}\|u\|_{X_p^s}^q-M\|u\|_{L^p(\Omega)}^p+C_M\\
&\leq\frac{\lambda K_q}{q}\|u\|_{X_p^s}^q-MK_2\|u\|_{X_p^s}^p+C_M.   
\end{align*}

Since $1<q<p$, we conclude that 
\begin{equation*}
\displaystyle\lim_{ u\in\textrm{span}\{\varphi_1\} , \Vert u\Vert_{X_p^s}\to \infty}I_{\lambda,p}(u)=-\infty.
\end{equation*}

Thus, there exists $R>0$ such that 
$I_{\lambda,p}(u)<0$ for all $u\in \textrm{span}\{\varphi_1\}$ satisfying $ \|u\|_{X_p^s}>R$.

If $u\in \textrm{span}\{\varphi_1\}$ and $\|u\|_{X_p^s}\leq R$, we have
\begin{align*}
0\leq I_{\lambda,p}(u)
\leq \frac{\lambda}{q}K_{q}\Vert u\Vert_{X_p^s}^q-\int_{\Omega} F(u)\dd x
\leq \frac{\lambda}{q}K_{q}R^q-\int_{\Omega} F(u)\dd x\leq \frac{\lambda}{q}K_{q}R^q.
\end{align*}

The result follows by defining $\eta(\lambda)=\displaystyle\frac{\lambda}{q}K_{q}R^q$. $\hfill\Box$\end{proof} 

\begin{proposition}\label{proplinking}Suppose that $f$ satisfies  $(f_{1,p})-(f_{4,p})$. If $\lambda_{1}\leq a<\lambda^*$, then problem \eqref{principal2} has at least a third solution, for all $\lambda>0$ small enough. 
\end{proposition}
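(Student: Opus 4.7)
The plan is to apply the Linking Theorem to $I_{\lambda,p}$ on $X_p^s$, exploiting the splitting $X_p^s = W \oplus \operatorname{span}\{\varphi_1\}$. The three lemmas preceding the statement fit this scheme exactly: Lemma \ref{C2PE_51} provides a sphere in $W$ on which $I_{\lambda,p} \geq \beta > 0$; Lemma \ref{C3PE_9} controls $I_{\lambda,p}$ globally on $\operatorname{span}\{\varphi_1\}$ by a quantity $\eta(\lambda)$ that tends to $0$ as $\lambda \to 0$; and Lemma \ref{C2PE_52} gives coercivity-from-above on finite-dimensional subspaces. The $(PS)$ condition at every level was established in Lemma \ref{C3PE_6.1}.

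Concretely, I would fix $\varphi \in W$ with $\|\varphi\|_{X_p^s}=1$, set $Y = \operatorname{span}\{\varphi_1,\varphi\}$, let $\rho,\beta>0$ be as in Lemma \ref{C2PE_51}, and define $S = \partial B_\rho \cap W$ together with, for $R>\rho$ to be chosen,
\[
Q = \bigl\{v+t\varphi : v \in \operatorname{span}\{\varphi_1\},\ t \geq 0,\ \|v+t\varphi\|_{X_p^s}\leq R\bigr\}.
\]
The relative boundary $\partial Q$ inside $Y$ consists of the base $\overline{B}_R\cap\operatorname{span}\{\varphi_1\}$, where Lemma \ref{C3PE_9} gives $I_{\lambda,p}\leq \eta(\lambda)$, and the curved cap $\{u \in Y : t\geq 0,\ \|u\|_{X_p^s}=R\}$, where Lemma \ref{C2PE_52} applied to the two-dimensional space $Y$ yields $I_{\lambda,p}(u)<0$ once $R$ is large. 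Choosing $\lambda$ small enough so that $\eta(\lambda)<\beta$, the linking inequality $\sup_{\partial Q} I_{\lambda,p} < \inf_S I_{\lambda,p}$ holds, and the Linking Theorem together with Lemma \ref{C3PE_6.1} yields a critical point $u_3$ at level $c \geq \beta > 0$.

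To certify that $u_3$ is distinct from the positive and negative mountain-pass solutions $u^+,u^-$ obtained in the two previous subsections, I would compare critical levels. Since $\varphi_1>0$, the sign-truncated functionals agree with $I_{\lambda,p}$ along $\pm t\varphi_1$: $I_{\lambda,p}^{\pm}(\pm t\varphi_1) = I_{\lambda,p}(\pm t\varphi_1)$ for $t \geq 0$. Taking the segments produced in Lemma \ref{C2PE_34} as admissible mountain-pass paths, the corresponding critical levels obey $c^{\pm}\leq \max_{t\geq 0} I_{\lambda,p}(\pm t\varphi_1) \leq \eta(\lambda)$ by Lemma \ref{C3PE_9}. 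Therefore $c^\pm \leq \eta(\lambda) < \beta \leq c$, so $u_3 \neq u^{\pm}$, and we have indeed produced a third nontrivial solution.

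The main obstacle I foresee is the uniform negativity on the curved cap of $\partial Q$: Lemma \ref{C2PE_52} only asserts $I_{\lambda,p}(u)\to -\infty$ as $\|u\|_{X_p^s}\to\infty$ within $Y$, but the linking setup demands a single radius $R$ for which $I_{\lambda,p}\leq 0$ holds on all of $\{u\in Y : \|u\|_{X_p^s}=R\}$ simultaneously. This uniformity follows, as is standard, from the equivalence of norms on the finite-dimensional space $Y$: the estimates in the proof of Lemma \ref{C2PE_52} are in fact directional-independent, so a single $R$ depending only on $\Lambda$ (any upper bound for $\lambda$) and on the fixed vector $\varphi$ suffices. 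Once this is carried out the remaining pieces are routine applications of the Linking Theorem and of the Palais--Smale property already obtained.
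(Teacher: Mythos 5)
Your proposal is correct and follows essentially the same route as the paper: the splitting $X_p^s=W\oplus\operatorname{span}\{\varphi_1\}$, the linking geometry built from Lemmas \ref{C2PE_51}, \ref{C2PE_52} and \ref{C3PE_9} on the set $Q$ spanned by $\varphi_1$ and a fixed $\varphi\in W$, the $(PS)$ condition from Lemma \ref{C3PE_6.1}, and the separation of the linking level $c\geq\beta$ from the mountain-pass levels $c^{\pm}\leq\eta(\lambda)<\beta$ via the path $t\mapsto t(t_0\varphi_1)$. The uniformity of the radius on the curved part of $\partial Q$, which you flag as a possible obstacle, is already supplied by the statement of Lemma \ref{C2PE_52} on the finite-dimensional subspace $\operatorname{span}\{\varphi_1,\varphi\}$, exactly as you resolve it.
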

\begin{proof}
We already know that  $X_p^s=W\oplus\textup{span}\{\varphi_1\}$ and that the functional $I_{\lambda,p}$ satisfies the Palais-Smale condition at all levels, for any $\lambda>0$. 
Therefore, the Linking theorem guarantees that $I_{\lambda,p}$ has a critical value $C\geq\beta$ given by $$C=\displaystyle\inf_{\gamma\in\Gamma}\displaystyle\max_{u\in Q} {I_{\lambda,p}(\gamma(u))}$$ where $\Gamma=\{\gamma\in C(\bar{Q},E)\;\;;\;\;\gamma=I_d\ \textrm{ in }\ \partial Q\}$.

Taking into account Lemma \ref{C2PE_51}, to conclude our result from the Linking Theorem, it suffices to show the existence of $e\in(\partial B_1)\cap W$, constants $R>\rho$ and $\alpha>0$ such that $I_{\lambda,p}\big|_{\partial Q}<\alpha<\beta$, where $Q=(B_R\cap \textup{span}\{\varphi_1\})\oplus(0,Re)$.
 
So, take 
$\varphi\in W$ with $\|\varphi\|_{X_p^s}=1$. Lemma \ref{C2PE_52} guarantees the existence of $\bar{R}>0$ such that 
\begin{align}\label{C3_18}
I_{\lambda,p}(u)<0\;\;\textrm{ for all } u\in \textrm{span}\{\varphi_1,\varphi\},\ \|u\|_{X_p^s}\geq\bar{R}.
\end{align}
By applying Lemma \ref{C2PE_51} for  $\rho\varphi\in\textrm{span}\{\varphi_1,\varphi\}$, we obtain
$$I_{\lambda,p}(\rho\varphi)\geq \beta>0,$$
proving that $\bar{R}>\rho$.

We now consider 
$$Q=\{u=w+t\varphi,\ w\in\textrm{span}\{\varphi_1\}\cap B_{\bar{R}},\; 0\leq t\leq\bar{R}\}$$
and consider the border $\partial Q=\displaystyle\bigcup_{i=1}^{3}\Gamma_i$ with
 \begin{enumerate}
\item $\Gamma_1=\overline{B}_{\bar{R}}(0)\cap\textrm{span}\{\varphi_1\}$,   
\item $\Gamma_2=\{u\in X_p^s\,:\, u=w+\bar{R}\varphi, w\in B_{\bar{R}}(0)\cap \textrm{span}\{\varphi_1\}\}$, 
\item $\Gamma_3=\{u\in X_p^s\,:\, u=w+r\varphi,\; w\in\textrm{span}\{\varphi_1\},\; \|w\|_{X_p^s}=\bar{R},\ 0\leq r\leq \bar{R}\}$.
\end{enumerate}

We have $I_{\lambda,p} \big |_{\Gamma_i}\leq \eta(\lambda)$, for $i=1, 2, 3$.

In fact, this follows from Lemma \ref{C3PE_9} if $u\in\Gamma_1\subset\textrm{span}\{\varphi_1\}$. 
However, if $u\in \Gamma_2$ or $u\in \Gamma_3$, then it is a consequence of  \eqref{C3_18}.

By the Linking theorem, there exists a weak solution $u_\lambda\in X_p^s$ of the problem \eqref{principal2} such that $$0<\eta(\lambda)<\beta\leq I_{\lambda,p}(u_\lambda)=C_{\lambda}.$$ Observe that $u_\lambda\neq 0$, since $I_{\lambda,p}(0)=0$. 

In order to show that this third solution  is different from the positive and negative solutions obtained before,
consider $g_0^{+}:[0,1]\to X_p^s$ given by $g_0^{+}(t)=t(t_0\varphi_1)$, with $t_0$ defined in Lemma \ref{C2PE_34}.
We have  
$$g_{0}^+\in\{g\in C([0,1],X_p^s)\, :\, g(0)=0,\ g(1)=t_0\varphi_1\}.$$

It follows from Lemma \ref{C3PE_9} that
\begin{equation}\label{curvaPE}
I_{\lambda,p}^+(g_0^+(t))=I_{\lambda,p}(g_0^+(t)) \leq\eta(\lambda),\,\textrm{ for all } t\in[0,1].
\end{equation}

To conclude that the solution established by the Linking theorem is different from the positive and negative solutions obtained before, we apply the result analogous of Lemma \ref{C2PE_34}, valid for solutions with negative energy and define $g_0^-\in\Gamma^-$ satisfying an estimate analogous to \eqref{curvaPE}.
$\hfill\Box$\end{proof}

\textit{Proof of Theorem \ref{3}.} To conclude its proof we observe that, if $f$ is odd, then $I_{\lambda,p}$ is even. Now, the existence of infinite many solutions follows by applying the symmetric version of the Mountain Pass Theorem, see \cite[Theorem 9,12]{rabinowitz}.
$\hfill\Box$

\section{Proof of Theorem \ref{4}}
\subsection{Positive and negative solutions for the functional $I_{\lambda,p}$}
\begin{lemma}\label{C4PE_3} Suppose that $f$ satisfies the hypotheses of the critical exponential growth case. Then the functional $$I_{\lambda,p}(u)=\displaystyle\frac{1}{p} \|u\|_{X_p^s}^p+\frac{\lambda}{q}\int_{\Omega}\vert u\vert^q\dd x-\frac{a}{p}\int_{\Omega}\vert u\vert^p\dd x-\int_{\Omega} F(u)\dd x,$$ satisfies the $(PS)$-condition at any level  $c<\displaystyle\frac{s}{N}\left(\frac{\alpha_{s,N}^*}{\alpha_0}\right)^{\frac{N-s}{s}}$. 
\end{lemma}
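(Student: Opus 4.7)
The plan follows the classical exponential Trudinger–Moser scheme. First, Lemma \ref{C2_9} gives that any PS sequence $(u_n)$ for $I_{\lambda,p}$ is bounded in $X_p^s$, so a subsequence satisfies $u_n \rightharpoonup u$ in $X_p^s$, $u_n \to u$ in $L^r(\Omega)$ for all $r \in [1,\infty)$, and $u_n \to u$ a.e.\ in $\Omega$. A standard Moser–Trudinger argument combining Proposition \ref{C2P_4.4} with the critical growth estimate deduced from $(f'_{2,p})$ and Vitali's theorem yields $f(u_n) \to f(u)$ in $L^1(\Omega)$; then $(f_{6,p})$ gives $F(u_n) \to F(u)$ in $L^1(\Omega)$. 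Passing to the limit in $\langle I'_{\lambda,p}(u_n), \varphi\rangle \to 0$ for $\varphi \in X_p^s$, one checks that $u$ is a weak solution of \eqref{principal2}.

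Condition $(f_{5,p})$ implies the pointwise inequality $pF(t) \leq f(t)t$, so using $\langle I'_{\lambda,p}(u), u \rangle = 0$,
\[
I_{\lambda,p}(u) \;=\; \frac{\lambda(p-q)}{pq}\int_{\Omega}|u|^q\,\dd x \;+\; \int_{\Omega}\!\left[\frac{1}{p}f(u)u - F(u)\right]\!\dd x \;\geq\; 0.
\]
Setting $v_n = u_n - u$ and noting that the norm $\|\cdot\|_{X_p^s}$ equals the $L^p$ norm of the Gagliardo difference quotient, the Brezis–Lieb lemma in $L^p(\mathbb{R}^{2N},\,|x-y|^{-N-sp}\,\dd x\,\dd y)$ yields $\|u_n\|_{X_p^s}^p = \|v_n\|_{X_p^s}^p + \|u\|_{X_p^s}^p + o(1)$. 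Expanding $I_{\lambda,p}(u_n) \to c$ and using $F(u_n) \to F(u)$ in $L^1$ together with the strong $L^p, L^q$ convergence gives $\|v_n\|_{X_p^s}^p \to \ell := p\bigl(c - I_{\lambda,p}(u)\bigr)$, so $0 \leq \ell \leq pc$.

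The relation $N = sp$ translates the hypothesis on $c$ into $\ell^{1/(p-1)} \leq (pc)^{1/(p-1)} < \alpha_{s,N}^*/\alpha_0$ (using $(N-s)/s = p-1$). Assume, for contradiction, $\ell > 0$. Fix $r > 1$ and $\delta, \epsilon > 0$ sufficiently small that $r(\alpha_0+\delta)(1+\epsilon)\,\ell^{1/(p-1)} < \alpha_{s,N}^*$. Young's inequality gives $|u_n|^{p/(p-1)} \leq (1+\epsilon)|v_n|^{p/(p-1)} + C_\epsilon |u|^{p/(p-1)}$; applying Proposition \ref{C2P_4.4} to the normalized $v_n/\|v_n\|_{X_p^s}$ (which is legitimate for $n$ large, as $r(\alpha_0+\delta)(1+\epsilon)\|v_n\|_{X_p^s}^{p/(p-1)}$ stays below $\alpha_{s,N}^*$), together with a Hölder splitting between the $v_n$-factor and the fixed $u$-factor, shows that $\int_\Omega \exp\!\bigl(r(\alpha_0+\delta)|u_n|^{p/(p-1)}\bigr)\,\dd x$ is bounded uniformly in $n$. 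By $(f'_{2,p})$, this makes $|f(u_n)|$ uniformly bounded in $L^r(\Omega)$. Since $v_n \to 0$ in $L^{r'}(\Omega)$ by compact embedding, Hölder's inequality yields $\int_{\Omega} f(u_n) v_n\,\dd x \to 0$.

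Finally, combining $\langle I'_{\lambda,p}(u_n), v_n\rangle \to 0$ with the strong convergence in $L^p$ and $L^q$ that kills the $|u_n|^{p-2}u_n v_n$ and $|u_n|^{q-2}u_n v_n$ terms, and with the limit just established, one obtains $\langle A(u_n), u_n - u\rangle \to 0$. Lemma \ref{C2P_4.6} then forces $u_n \to u$ in $X_p^s$, contradicting $\ell > 0$. Hence $\ell = 0$ and $u_n \to u$ strongly. The main obstacle is the uniform $L^r$-bound on $f(u_n)$: this is precisely where the critical-level threshold $\frac{s}{N}(\alpha_{s,N}^*/\alpha_0)^{(N-s)/s}$ is consumed, since any higher level of $c$ would make $\ell^{1/(p-1)}$ exceed $\alpha_{s,N}^*/\alpha_0$ and destroy the Moser–Trudinger estimate for $\exp\!\bigl(r(\alpha_0+\delta)|u_n|^{p/(p-1)}\bigr)$.
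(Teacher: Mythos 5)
Your argument is correct in outline and reaches the same conclusion, but the central compactness step is organized quite differently from the paper's. The paper does not translate the sequence: it first shows $I_{\lambda,p}(u)\leq c$ by weak lower semicontinuity, assumes $I_{\lambda,p}(u)<c$ for contradiction, and then works with the \emph{normalized} functions $v_n=u_n/\|u_n\|_{X_p^s}$, which converge weakly to some $v$ with $\|v\|_{X_p^s}<1$; the uniform $L^r$-bound on $f(u_n)$ is then extracted from a Lions-type (concentration--compactness) improvement of the Moser--Trudinger inequality along the sequence $(v_n)$, with the level restriction $pc<(\alpha_{s,N}^*/\alpha_0)^{p-1}$ entering through the factor $(1-\|v\|_{X_p^s}^p)^{-s/(N-s)}$. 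You instead use the Br\'ezis--Lieb splitting $\|u_n\|_{X_p^s}^p=\|u_n-u\|_{X_p^s}^p+\|u\|_{X_p^s}^p+o(1)$, identify $\ell=p(c-I_{\lambda,p}(u))$ as the limiting norm of the remainder, and run a Young-inequality-shifted Moser--Trudinger estimate on $v_n/\|v_n\|_{X_p^s}$. The two routes consume the threshold identically (both reduce to $\alpha_0\,(p(c-I_{\lambda,p}(u)))^{1/(p-1)}<\alpha_{s,N}^*$), but yours has the advantage of invoking only Proposition \ref{C2P_4.4} on normalized functions plus the elementary inequality $|a+b|^{p/(p-1)}\leq(1+\epsilon)|a|^{p/(p-1)}+C_\epsilon|b|^{p/(p-1)}$, whereas the paper's version tacitly relies on the Lions improvement, which it never states. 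The price you pay is needing the Br\'ezis--Lieb lemma for the Gagliardo kernel and the fact that $\exp(\beta|u|^{p/(p-1)})\in L^1(\Omega)$ for the fixed limit $u$ and arbitrary $\beta$; both are standard but should be recorded.

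One justification you should repair: the convergence $f(u_n)\to f(u)$ in $L^1(\Omega)$ cannot be obtained from Proposition \ref{C2P_4.4} and Vitali's theorem alone, because the $(PS)$ bound only gives $\|u_n\|_{X_p^s}\leq M$ with $M$ possibly large, so $\exp(\alpha|u_n|^{N/(N-s)})$ need not be uniformly integrable for $\alpha$ near $\alpha_0$. The correct tool, and the one the paper uses, is the de Figueiredo--Miyagaki--Ruf convergence lemma, which yields $f(u_n)\to f(u)$ in $L^1(\Omega)$ from the a.e.\ convergence together with the uniform bound $\int_\Omega f(u_n)u_n\,\dd x\leq C$ supplied by the $(PS)$ condition and \eqref{ARp}. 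With that substitution your proof goes through.
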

\begin{proof}
For $c<\frac{s}{N}\left(\frac{\alpha_{s,N}^*}{\alpha_0}\right)^{\frac{N-s}{s}}$, let $(u_n)$ be a $(PS)_c$ sequence in $X_p^s$. Lemma \ref{C2_9} guarantees that  $(u_n)$ is bounded. Therefore, passing to a subsequence, we can suppose that
$$\begin{cases}
u_n\rightharpoonup u\ \textrm{ in }\, X_p^s,\\
u_n\to u \textrm{ in }\,  L^{q}(\Omega)\textrm{ for all }\, q\geq 1\\
u_n(x)\to u(x)\textrm{ a.e. in }\, \Omega.
\end{cases}
$$

Since $\lambda>0$, $(\|I_{\lambda,p}'(u_n)\|_{(X_p^s)^*})$ and $(I_{\lambda,p}(u_n))$ are bounded sequences in $\mathbb{R}$. Therefore,
\begin{align*}
\int_{\Omega} f(u_n)u_n\dd x
&\leq \|u_n\|^p_{X_p^s}+\lambda\int_{\Omega}\vert u_n\vert^q\dd x+a\int_{\Omega}\vert u_n\vert^p\dd x+\|I_{\lambda,p}'(u_n)\|\|u_n\|_{X_p^s}\leq C_1\\
\int_{\Omega} F(u_n)\dd x&\leq\frac{\|u_n\|^p_{X_p^s}}{p}+\frac{\lambda}{q}\int_{\Omega}\vert u_n\vert^q\dd x+\frac{a}{p}\int_{\Omega}\vert u_n\vert^p\dd x+\vert I_{\lambda,p}(u_n)\vert\leq C_2.
\end{align*}

Because $f$ satisfies $(f_{5,p})$, it also satisfies 
\begin{equation}\label{ARp}
pF(t)\leq tf(t)\quad \textrm{ for all } t\neq 0.\end{equation} 
Therefore, there exists $C>0$ such that 
\begin{equation*}
\max\left\lbrace \|u_n\|_{X_p^s}^2, \int_{\Omega} f(u_n)u_n\dd x, \int_{\Omega} F(u_n)\dd x\right\rbrace\leq C.
\end{equation*}

It follows from \eqref{apriori} that  
$f(u_n), f(u)\in L^1(\Omega)$. Since $u_n\to u$ in $L^1(\Omega)$ and $\int_{\Omega} f(u_n)u_n\dd x\leq C$, we conclude that  
\begin{equation}\label{C4PE_5}
f(u_n)\to f(u) \mbox { in } L^1(\Omega)
\end{equation}
by applying \cite[Lema 2.1]{deFigueiredo}. Thus,  \eqref{C4PE_5} and $(f_{6,p})$ allow us to conclude that
\begin{equation}\label{C4PE_6}
F(u_n)\to F(u)\;\textrm{ in }\,L^1(\Omega)
\end{equation} 
and 
\begin{equation*}
\frac{\|u_n\|_{X_p^s}^p}{p}\to c-\frac{\lambda}{q}\int_{\Omega}\vert u\vert^q\dd x+\frac{a}{p}\int_{\Omega}\vert u\vert^p\dd x+\int_{\Omega} F(u)\dd x.
\end{equation*} 

Since $I'_{\lambda}(u_n)\to 0$ in $(X_p^s)^{*}$, it follows that 
\begin{equation}\label{C4PE_8}
\int_{\Omega} f(u_n)u_n\dd x
\to pc+\lambda\left(1-\frac{p}{q}\right)\int_{\Omega}\vert u\vert^q\dd x+p\int_{\Omega} F(u)\dd x.
\end{equation}

A new application of \eqref{ARp} yields \[\displaystyle\int_{\Omega} f(u_n)u_n\dd x-p\int_{\Omega}F(u_n)\dd x\geq 0.\]
We conclude from \eqref{C4PE_8} that 
\begin{equation*}
pc\geq \lambda\left(\frac{p}{q}-1\right)\int_{\Omega}\vert u\vert^q,
\end{equation*}
thus showing that $c\geq 0$. Now, standard arguments show that
$\langle I_{\lambda,p}'(u),v\rangle=0$ for all $v\in X_p^s$.  

Thus, \eqref{ARp} yields  
\begin{align*}
I_{\lambda,p}(u)
&\geq \frac{1}{p}\left(\|u\|_{X_p^s}^p+\frac{p\lambda}{q}\int_{\Omega}\vert u\vert^q-a\int_{\Omega}\vert u\vert^p\dd x-\int_{\Omega} f(u)u\dd x\right)\\
&> \frac{1}{p}\left(\|u\|_{X_p^s}^p+\lambda\int_{\Omega}\vert u\vert^q-a\int_{\Omega}\vert u\vert^2\dd x-\int_{\Omega} f(u)u\dd x\right)\\
&=\frac{1}{p}\langle I'_{\lambda,p}(u),u\rangle
=0
\end{align*}
proving that $I_{\lambda,p}(u)>0$, since $I_{\lambda,p}(0)=0$, and also that $c>0$ and $u\neq 0$.

To prove that $u_n\to u$ in $X_p^s$, it suffices to show that $I_{\lambda,p}(u)=c$, since this yields $ \|u_n\|_{X_p^s}\to\|u\|_{X_p^s}$.

In fact, it follows from \eqref{C4PE_6} that 
\begin{align*}
I_{\lambda,p}(u)
&\leq \displaystyle{\liminf_{n\to\infty}}\left(\frac{1}{p}\|u_n\|_{X_p^s}^p\right)+\displaystyle{\liminf_{n\to\infty}}\left(\frac{\lambda}{q}\int_{\Omega}\vert u_n\vert^q-\frac{a}{p}\int_{\Omega}\vert u_n\vert^p\dd x-\int_{\Omega} F(u_n)\dd x\right)\\
&\leq \displaystyle{\liminf_{n\to\infty}}\left(\frac{1}{p}\|u_n\|_{X_p^s}^p+\frac{\lambda}{q}\int_{\Omega}\vert u_n\vert^q-\frac{a}{p}\int_{\Omega}\vert u_n\vert^p\dd x-\int_{\Omega} F(u_n)\dd x\right) \\
&=\displaystyle{\liminf_{n\to\infty}}\;I_{\lambda,p}(u_n)=c.
\end{align*}

If $I_{\lambda,p}(u)<c$, then we would have
\begin{align}\label{C4PE_14}
\displaystyle{\lim_{n\to\infty}}\|u_n\|_{X_p^s}^p&=p\left(c-\frac{\lambda}{q}\int_{\Omega}\vert u\vert^q+\frac{a}{p}\int_{\Omega}\vert u\vert^p\dd x+\int_{\Omega} F(u)\dd x\right)\nonumber\\
&>p\left(I_{\lambda,p}(u)-\frac{\lambda}{q}\int_{\Omega}\vert u\vert^q+\frac{a}{p}\int_{\Omega}\vert u\vert^2\dd x+\int_{\Omega} F(u)\dd x\right)\nonumber\\
&=\|u\|_{X_p^s}^p.
\end{align}

By defining  $v_n=\dfrac{u_n}{\|u_n\|_{X_p^s}}$ and $v=\dfrac{u}{c_0}$
where
 $$c_0=\left(pc-\frac{p\lambda}{q}\int_{\Omega}\vert u\vert^q+a\int_{\Omega}\vert u\vert^p\dd x+p\int_{\Omega} F(u)\dd x\right)^{-1/p}>0,$$
\eqref{C4PE_14} would then imply that
$$\|v\|_{X_p^s}=\frac{\|u\|_{X_p^s}}{c_0}<\frac{\|u\|_{X_p^s} }{\|u\|_{X_p^s}}=1.$$ 

Since we can conclude that
$v_n\rightharpoonup v$ in $X_p^s$, by choosing $\alpha>\alpha_0$ so that
$$pr^{\frac{N-s}{s}}\alpha^{\frac{N-s}{s}}<\frac{\displaystyle\left(\alpha_{s,N}^*\right)^{\frac{N-s}{s}} }{c},$$
$I_{\lambda,p}(u)>0$ and \eqref{C4PE_14} would then imply
\begin{align*}
\displaystyle{\lim_{n\to\infty}}r^{\frac{N-s}{s}}\alpha^{\frac{N-s}{s}}\|u_n\|_{X_p^s}^p&=r^{\frac{N-s}{s}}\alpha^{\frac{N-s}{s}}c_0^p\nonumber\\
&<\displaystyle\left(\alpha_{s,N}^*\right)^{\frac{N-s}{s}}\left(\frac{c_0^p}{p(c-I_{\lambda,p}(u))}\right).
\end{align*}

It is not difficult to show that
\begin{equation*}
\frac{c_0^p}{p(c-I_{\lambda)}(u)}=\frac{1}{1-\|v\|_{X_p^s}^p}.
\end{equation*}

Thus,  
$$b=\displaystyle{\lim_{n\to\infty}}r^{\frac{N-s}{s}}\alpha^{\frac{N-s}{s}}\|u_n\|_{X_p^s}^p<\frac{\displaystyle\left(\alpha_{s,N}^*\right)^{\frac{N-s}{s}}}{1-\|v\|_{X_p^s}^p}.$$
So, for $\epsilon>0$ small enough, we have $$r\alpha\|u_n\|_{X_p^s}^\frac{N}{N-s}<\epsilon+b<\frac{\alpha_{s,N}^*}{(1-\|v\|_{X_p^s}^p)^{\frac{s}{N-s}}}$$ for all $n\in\mathbb{N}$ large enough. Thus, there exist $1<\mu<\frac{1}{(1-\|v\|_{X_p^s}^p)^{\frac{s}{N-s}}}$ and $0<\gamma<\alpha^{*}_{s,N}$ such that $$r\alpha\|u_n\|_{X_p^s}^\frac{N}{N-s}<\gamma \mu<\frac{\alpha^{*}_{s,N}}{(1-\|v\|_{X_p^s}^p)^{\frac{s}{N-s}}}.$$

But \eqref{apriori} implies
\begin{align*}
\int_{\Omega}\vert f(u_n)\vert^{r}\dd x &\leq  C\int_{\Omega} \exp(r\alpha\vert u_n\vert^\frac{N}{N-s})\dd x
\leq C\int_{\Omega}\exp\left(\gamma \mu \vert v_n\vert^\frac{N}{N-s}\right) \dd x.
\end{align*}
Our choice of $\mu$ and $\gamma$ guarantees that the sequence  
$\left(\exp\left(\gamma \vert v_n\vert^\frac{N}{N-s
}\right)\right)$ is bounded in $L^{\mu}(\Omega)$. Thus, there exists $K>0$ such that 
$$\int_{\Omega}\vert f(u_n)\vert^{r}\dd x\leq K,$$ 
proving that $(f(u_n))$ is bounded in  $L^r(\Omega)$ for some $r>1$. 

By applying the Brezis-Lieb lemma, we conclude that
$f(u_n)\rightharpoonup f(u)$ in $L^r(0,1)$ and, since $u_n\to u$ in $L^{r'}(0,1)$, 
we conclude that
\begin{equation*}
\displaystyle{\lim_{n\to\infty}}\int_{\Omega} f(u_n)u_n\dd x=\int_{\Omega} f(u)u\dd x.
\end{equation*} 

Thus,
\begin{align*}
\displaystyle{\lim_{n\to\infty}}\|u_n\|_{X_p^s}^p&=  
\displaystyle{\lim_{n\to\infty}}\left(\langle I'_{\lambda,p}(u_n),u_n\rangle-\frac{\lambda}{q}\int_{\Omega}\vert u_n\vert^q+a\int_{\Omega}\vert u_n\vert^p\dd x+\int_{\Omega} f(u_n)u_n\dd x\right)\\
&=-\frac{\lambda}{q}\int_{\Omega}\vert u\vert^q+a\int_{\Omega}\vert u\vert^p\dd x+\int_{\Omega} f(u)u\dd x\\
&=\|u\|_{X_p^s}^p-\langle I'_{\lambda,p}(u),u\rangle=\|u\|_{X_p^s}^p.
\end{align*} 
we have reached a contradiction. Therefore, $I_{\lambda,p}(u)=c$.
$\hfill\Box$\end{proof}
\begin{remark}
The same result is valid for the functionals $I_{\lambda,p}^{+}$ and $I^{-}_{\lambda}$.
\end{remark}

\begin{proposition}\label{sp}
Suppose that $a\geq \lambda_1$ and $f$ satisfies $(f_{1,p}), (f'_{2,p})$, $(f_{3,p})$ and $(f_{5,p})-(f_{7,p})$. Then, in the case of critical exponential growth, problem \eqref{principal2} has at least one positive solution for all $\lambda>0$ small enough. 
\end{proposition}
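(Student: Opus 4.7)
\textit{Proof plan for Proposition \ref{sp}.} The strategy is to apply the Mountain Pass Theorem to $I_{\lambda,p}^{+}$ and rely on Lemma \ref{C4PE_3} for compactness.

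First I would verify the mountain pass geometry. Since $(f_{7,p})$ implies $(f_{4,p})$, Lemma \ref{C2PE_34} supplies, for any preassigned $\Lambda>0$, a level $t_0=t_0(\Lambda)>0$ with $I_{\lambda,p}^{+}(t_0\varphi_1)<0$ for all $0<\lambda<\Lambda$. For the geometry at the origin, Lemma \ref{C3PE_4} gives that $0$ is a strict local minimum of $J_{\lambda,p}^{+}$ in $C_s^0(\overline{\Omega})$; since $I_{\lambda,p}^{+}(u)=J_{\lambda,p}^{+}(u)+\tfrac{1}{p}\|u\|_{X_p^s}^p$, the same conclusion holds for $I_{\lambda,p}^{+}$ in that topology, and Theorem \ref{0} promotes it to a strict local minimum in $X_p^s$. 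Hence a mountain pass value
$$c_\lambda=\inf_{\gamma\in\Gamma}\max_{t\in[0,1]}I_{\lambda,p}^{+}(\gamma(t)),\qquad \Gamma=\{\gamma\in C([0,1],X_p^s)\,:\,\gamma(0)=0,\ \gamma(1)=t_0\varphi_1\},$$
is well defined and strictly positive.

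Next I would produce a quantitative upper bound on $c_\lambda$. Choosing the linear path $\gamma_0(t)=tt_0\varphi_1$ and writing $s=tt_0$, the hypothesis $(f_{7,p})$ gives $F(t)\geq (C_r/r)|t|^r$, and $a\geq\lambda_1$ together with $\|\varphi_1\|_{X_p^s}^p=\lambda_1\|\varphi_1\|_{L^p(\Omega)}^p$ makes the $p$-homogeneous contributions cancel to something non-positive. Thus
$$I_{\lambda,p}^{+}(s\varphi_1)\leq \frac{\lambda s^q}{q}\|\varphi_1\|_{L^q(\Omega)}^q-\frac{C_r s^r}{r}\|\varphi_1\|_{L^r(\Omega)}^r.$$
Maximizing the right-hand side in $s\geq 0$ (the optimum is at $s^{r-q}=\lambda\|\varphi_1\|_{L^q(\Omega)}^q/(C_r\|\varphi_1\|_{L^r(\Omega)}^r)$) yields
$c_\lambda\leq \kappa\,\lambda^{r/(r-q)}$ for a constant $\kappa$ independent of $\lambda$. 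Because $r>q$, this forces
$$c_\lambda<\frac{s}{N}\left(\frac{\alpha_{s,N}^{*}}{\alpha_0}\right)^{\!(N-s)/s}$$
for all sufficiently small $\lambda>0$.

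Finally, Lemma \ref{C4PE_3} supplies the $(PS)_{c_\lambda}$ condition under this threshold (it carries over to $I_{\lambda,p}^{+}$ by the remark following its proof), so the Mountain Pass Theorem produces a critical point $u_\lambda\in X_p^s\setminus\{0\}$ at level $c_\lambda>0$. Testing the critical-point identity for $I_{\lambda,p}^{+}$ against $u_\lambda^{-}$, the nonlinear terms involving $u_\lambda^{+}$ drop out and a standard $\langle A(u_\lambda),u_\lambda^{-}\rangle\geq \|u_\lambda^{-}\|_{X_p^s}^p$ style inequality (using the discrete computation displayed in the proof of Lemma \ref{C2P_23.1}) forces $u_\lambda^{-}\equiv 0$; hence $u_\lambda\geq 0$, and the fractional strong maximum principle gives $u_\lambda>0$ a.e.\ in $\Omega$. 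The function $u_\lambda$ is then the desired positive weak solution of \eqref{principal2}.

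The main obstacle I expect is step two, namely ensuring $c_\lambda$ lies below the compactness threshold of Lemma \ref{C4PE_3}. The algebraic shape of $(f_{7,p})$, with its explicit constant calibrated to $\alpha_0/\alpha_{s,N}^{*}$ and to the Sobolev-type constant $C$, is tailored exactly for this estimate, and the smallness of $\lambda$ is what finally makes the $L^q$ perturbation negligible against the $L^r$ lower bound on $F$.
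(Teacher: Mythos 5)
Your proposal is correct and follows essentially the same route as the paper: mountain pass geometry for $I_{\lambda,p}^{+}$ via Lemmas \ref{C3PE_4} and \ref{C2PE_34}, an upper bound on the minimax level along the ray $t\mapsto t\,t_0\varphi_1$ that tends to $0$ as $\lambda\to 0$, and the $(PS)_{c}$ condition below the threshold $\frac{s}{N}\left(\frac{\alpha_{s,N}^{*}}{\alpha_0}\right)^{\frac{N-s}{s}}$ supplied by Lemma \ref{C4PE_3}. The only cosmetic difference is that the paper gets the level estimate by invoking Lemma \ref{C3PE_9} (which rests on $(f_{4,p})$, implied by $(f_{7,p})$), whereas you maximize $\frac{\lambda s^q}{q}\|\varphi_1\|_{L^q(\Omega)}^q-\frac{C_r s^r}{r}\|\varphi_1\|_{L^r(\Omega)}^r$ directly to obtain the sharper bound $c_\lambda\le\kappa\,\lambda^{r/(r-q)}$; both yield the same conclusion.
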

\begin{proof}
As in the subcritical growth case, the  functional $I_{\lambda,p}^+$ satisfies the geometric hypotheses of the Mountain Pass Theorem.

We will show that $I_{\lambda,p}^+$ satisfies the $(PS)$ condition at level $C_\lambda^+$, given by
\[C_\lambda^+=\displaystyle\inf_{g\in\Gamma^+}\displaystyle\max_{u\in g([0,1])} {I_{\lambda,p}^+(u)},\] 
where $\Gamma^+=\{g\in C([0,1],X_p^s)\,:\, g(0)=0,\, g(1)=t_0\varphi_1\}$, with $t_0$ given in Lemma \ref{C2PE_34}. 

Observe that $$\displaystyle\max_{t\in[0,1]}I_{\lambda,p}^+(g(t))\geq I_{\lambda,p}^+(g(0))=I_{\lambda,p}^+(0)=0,\;\forall\; g\in\Gamma^+,$$
thus implying that $\displaystyle\max_{t\in[0,1]}I_{\lambda,p}^+(g(t))\geq 0,\ \forall\; g\in\Gamma^+$.

It follows that $$0\leq \displaystyle\inf_{g\in\Gamma^+}\displaystyle\max_{u\in g([0,1])} {I_{\lambda,p}^+(u)}=C_\lambda^+<\infty.$$ 

As in the proof of Lemma \ref{C4PE_3}, we obtain that $I_{\lambda,p}^+$ satisfies the $(PS)_c$ condition for all $\lambda>0$, where $c<\frac{s}{N}\left(\frac{\alpha_{s,N}^*}{\alpha_0}\right)^{\frac{N-s}{s}}$. 

We will show that  
$C_\lambda^+<\frac{s}{N}\left(\frac{\alpha_{s,N}^*}{\alpha_0}\right)^{\frac{N-s}{s}}$, if $\lambda>0$ is small enough.

In fact, by defining $g_0^{+}:[0,1]\to X_p^s$ by
$g_0^{+}(t)=t(t_0\varphi_1)$, the result follows by applying Lemma \ref{C3PE_9}: 
$$C_{\lambda}^+\leq I_{\lambda,p}^+(g_0^+)<\eta(\lambda)<\displaystyle\frac{s}{N}\left(\frac{\alpha_{s,N}^*}{\alpha_0}\right)^{\frac{N-s}{s}},$$ if $\lambda>0$ is small enough.
$\hfill\Box$\end{proof}

The proof of existence of a negative solution is analogous to that of Proposition \ref{sp}. 
\begin{proposition}
Suppose that $a\geq \lambda_1$ and that  $f$ satisfies $(f_{1,p}), (f'_{2,p}),(f_{3,p})$ and $(f_{5,p})-(f_{7,p})$. Then, in the case of critical exponential growth, problem \eqref{principal2} has at least one negative solution for all $\lambda>0$ small enough. 
\end{proposition}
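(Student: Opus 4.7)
\textit{Proof plan.} The strategy is to mirror the positive-solution argument of Proposition \ref{sp} using the truncated functional $I_{\lambda,p}^{-}$ introduced in Section \ref{prel}, whose critical points are precisely the non-positive weak solutions of \eqref{principal2}. First, I would verify that $I_{\lambda,p}^{-}$ inherits the mountain pass geometry: the point $u=0$ is a strict local minimum (this follows from the analog of Lemma \ref{C3PE_4} applied to $J_{\lambda,p}^{-}=I_{\lambda,p}^{-}-\frac{1}{p}\|u\|_{X_p^s}^p$, after invoking Theorem \ref{0} to reduce the minimization question to the space $C_s^0(\overline{\Omega})$ where condition $(f_{3,p})$ makes the argument immediate), and there exists a ``far'' point with negative energy, namely $u_1=-t_0\varphi_1$ with $t_0$ coming from the negative-sign analog of Lemma \ref{C2PE_34} (using $(f_{7,p})$, which implies $(f_{4,p})$, together with the symmetric behavior of $F$).

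Next, I would define the minimax level
\[
C_\lambda^{-}=\inf_{g\in\Gamma^{-}}\max_{t\in[0,1]} I_{\lambda,p}^{-}(g(t)),\qquad \Gamma^{-}=\{g\in C([0,1],X_p^s):g(0)=0,\ g(1)=-t_0\varphi_1\},
\]
and note $0\le C_\lambda^{-}<\infty$ exactly as in Proposition \ref{sp}. The key point is to push $C_\lambda^{-}$ below the compactness threshold $\frac{s}{N}(\alpha_{s,N}^*/\alpha_0)^{(N-s)/s}$ from Lemma \ref{C4PE_3}. For this I would test with the straight segment $g_0^{-}(t)=-t(t_0\varphi_1)$ and invoke the version of Lemma \ref{C3PE_9} for the functional $I_{\lambda,p}^{-}$ (the proof is identical since $\varphi_1>0$ and the hypotheses on $F$ are symmetric in sign), obtaining $I_{\lambda,p}^{-}(g_0^{-}(t))\le\eta(\lambda)$ with $\eta(\lambda)\to 0$ as $\lambda\to 0^+$. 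Thus $C_\lambda^{-}\le\eta(\lambda)<\frac{s}{N}(\alpha_{s,N}^*/\alpha_0)^{(N-s)/s}$ for all sufficiently small $\lambda>0$.

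Finally, the $(PS)_c$ condition for $I_{\lambda,p}^{-}$ below this threshold is exactly the remark following Lemma \ref{C4PE_3} (the proof carries over verbatim because $(f_{5,p})$ gives $pF(t)\le tf(t)$ for every $t\neq 0$, in particular for $t<0$, and because $(f_{6,p})$ is symmetric). The Mountain Pass Theorem then produces a critical point $u_\lambda\in X_p^s$ of $I_{\lambda,p}^{-}$ with $I_{\lambda,p}^{-}(u_\lambda)=C_\lambda^{-}\ge\beta>0$; in particular $u_\lambda\not\equiv 0$. Since $u_\lambda$ is a critical point of $I_{\lambda,p}^{-}$, testing the Euler--Lagrange equation with $u_\lambda^+\in X_p^s$ and exploiting the monotonicity of $(-\Delta)_p^s$ yields $u_\lambda^+\equiv 0$, so $u_\lambda\le 0$ in $\Omega$; the strict sign then follows from the fractional strong minimum principle applied to the resulting equation, giving a genuinely negative weak solution of \eqref{principal2}.

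\textbf{Main obstacle.} No step is hard in isolation — every ingredient has a direct counterpart in the positive-solution proof. The one delicate point is ensuring that the compactness threshold estimate $C_\lambda^{-}<\frac{s}{N}(\alpha_{s,N}^*/\alpha_0)^{(N-s)/s}$ really is achieved by the same $\eta(\lambda)$-bound used before: one must check that each inequality in the proof of Lemma \ref{C3PE_9} (in particular the application of $(f_{4,p})$ through the bound $F(t)\ge M|t|^p-C_M$) holds without any sign restriction on the test function, which is exactly what the symmetric growth hypotheses grant.
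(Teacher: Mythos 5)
Your proposal is correct and follows exactly the route the paper intends: the paper's own ``proof'' of this proposition consists of the single remark that it is analogous to Proposition \ref{sp}, and your plan is precisely that analogue carried out with $I_{\lambda,p}^{-}$, the path $g_0^{-}(t)=-t(t_0\varphi_1)$, the $\eta(\lambda)$ threshold bound, and the $(PS)_c$ condition below $\frac{s}{N}(\alpha_{s,N}^*/\alpha_0)^{(N-s)/s}$. The only extra content you add (strict negativity via a strong minimum principle) goes slightly beyond what the paper claims, which only asserts a nontrivial non-positive solution.
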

 \subsection{A third solution}
\begin{proposition}
Suppose that $f$ satisfies $(f_{1,p}), (f'_{2,p}), (f_{3,p})$, and $(f_{5,p})-(f_{7,p})$. If $\lambda_{1}\leq a<\lambda^{*}$ then, for all $\lambda>0$ small enough, problem \eqref{principal2} has at least a third solution in case of the critical exponential growth.
\end{proposition}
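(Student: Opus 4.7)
The plan is to repeat the Linking argument used in Proposition \ref{proplinking}, but the critical-growth setting forces us to locate the minimax level strictly below the Palais-Smale threshold of Lemma \ref{C4PE_3}. Since $\lambda_1\leq a<\lambda^*$, Lemma \ref{C2PE_51} already gives $I_{\lambda,p}(u)\geq\beta$ for all $u\in W$ with $\|u\|_{X_p^s}=\rho$, and for any fixed $\varphi\in W$ with $\|\varphi\|_{X_p^s}=1$ the two-dimensional subspace $\mathbb{F}=\textup{span}\{\varphi_1,\varphi\}$ provides, via Lemma \ref{C2PE_52}, a radius $\bar R>\rho$ beyond which $I_{\lambda,p}<0$ on $\mathbb{F}$. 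The box $Q=(B_{\bar R}\cap\textup{span}\{\varphi_1\})\oplus[0,\bar R]\varphi$ and the three faces $\Gamma_1,\Gamma_2,\Gamma_3$ are then exactly as in Proposition \ref{proplinking}, and Lemmas \ref{C2PE_52}--\ref{C3PE_9} give $I_{\lambda,p}|_{\partial Q}\leq\eta(\lambda)$ with $\eta(\lambda)\to 0$ as $\lambda\to 0^+$. Hence the Linking geometry is in place independently of the growth rate of $f$.

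The decisive new step is to verify that
\[
C_\lambda:=\inf_{\gamma\in\Gamma}\max_{u\in\bar Q} I_{\lambda,p}(\gamma(u))
\;<\; c^{*}:=\frac{s}{N}\!\left(\frac{\alpha_{s,N}^{*}}{\alpha_{0}}\right)^{\!\frac{N-s}{s}},
\]
where $\Gamma=\{\gamma\in C(\bar Q,X_p^s):\gamma|_{\partial Q}=\mathrm{id}\}$, so that Lemma \ref{C4PE_3} yields the $(PS)_{C_\lambda}$ condition. Testing with $\gamma=\mathrm{id}$, we must bound $\max_{u\in\bar Q}I_{\lambda,p}(u)$ from above. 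Here hypothesis $(f_{7,p})$ enters: it provides $F(t)\geq\frac{C_r}{r}|t|^r$ together with the Sobolev-type inequality $\|u\|_{L^r(\Omega)}\geq C\|u\|_{X_p^s}$ on the two-dimensional space $\mathbb{F}\supset\bar Q$. Dropping the non-positive term $-\frac{a}{p}\int|u|^p\,\dd x$, we obtain, for every $u\in\bar Q$,
\[
I_{\lambda,p}(u)\;\leq\;\frac{\|u\|_{X_p^s}^p}{p}-\frac{C_r C^r}{r}\|u\|_{X_p^s}^r+\frac{\lambda}{q}\|u\|_{L^q(\Omega)}^q.
\]
A direct maximization of $t\mapsto t^p/p-(C_rC^r/r)t^r$ over $t\geq 0$ yields the value $\frac{r-p}{pr}(C_rC^r)^{-p/(r-p)}$, which by the precise lower bound imposed on $C_r$ in $(f_{7,p})$ is strictly smaller than $c^{*}/2$. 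Since $q<p<r$, the remaining term is $O(\lambda)$ uniformly on $\bar Q$, so it can be absorbed for $\lambda$ small enough, giving $C_\lambda<c^{*}$.

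Once $C_\lambda<c^{*}$ is secured, the Linking Theorem produces a critical point $u_\lambda\in X_p^s$ with $I_{\lambda,p}(u_\lambda)=C_\lambda\geq\beta>0$. To distinguish $u_\lambda$ from the positive and negative solutions $u_\lambda^{\pm}$ of the preceding subsection, one repeats the argument at the end of Proposition \ref{proplinking}: the curves $g_0^{\pm}(t)=t\,t_0\varphi_1$ lie entirely inside $\textup{span}\{\varphi_1\}$, so by Lemma \ref{C3PE_9} their energy along the mountain-pass path is bounded by $\eta(\lambda)$, giving $I_{\lambda,p}^{\pm}(u_\lambda^{\pm})\leq\eta(\lambda)$. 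Since $\eta(\lambda)\to 0<\beta$ as $\lambda\to 0^+$, for all sufficiently small $\lambda>0$ we have $I_{\lambda,p}(u_\lambda^{\pm})\leq\eta(\lambda)<\beta\leq I_{\lambda,p}(u_\lambda)$, so $u_\lambda$ is a genuinely third, sign-changing solution of \eqref{principal2}.

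The main obstacle is clearly the uniform upper estimate $C_\lambda<c^{*}$; this is the sole place where the quantitative condition $(f_{7,p})$ is used in an essential way, and the factor $2$ in the formula bounding $C_r$ is precisely what leaves enough slack to absorb the concave perturbation $\frac{\lambda}{q}\int|u|^q\,\dd x$ for small $\lambda$. Everything else — the Linking geometry, the $(PS)_{C_\lambda}$ condition from Lemma \ref{C4PE_3}, and the separation from $u_\lambda^{\pm}$ via Lemma \ref{C3PE_9} — is a direct transcription of arguments already carried out in the subcritical case.
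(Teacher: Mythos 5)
Your proposal is correct and follows essentially the same route as the paper: the same linking decomposition $X_p^s=W\oplus\textup{span}\{\varphi_1\}$ with $Q\subset\mathbb{F}=\textup{span}\{\varphi_1,\varphi\}$, the same use of $(f_{7,p})$ to push $\max_{u\in\bar Q}$ of the $\lambda$-free part strictly below $\frac{1}{2}\frac{s}{N}\bigl(\frac{\alpha_{s,N}^*}{\alpha_0}\bigr)^{\frac{N-s}{s}}$ via the one-dimensional maximization, the same absorption of the $O(\lambda)$ concave term, and the same energy-level comparison with the mountain-pass solutions. The only caveat is your final phrase calling $u_\lambda$ ``sign-changing'': the argument only shows $u_\lambda$ is distinct from the two solutions already obtained, which is all the proposition claims.
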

\begin{proof}
According to Lemmas  \ref{C2PE_51} and \ref{C2PE_52}, the functional $I_{\lambda,p}$ satisfies the geometry of the Linking Theorem.

We maintain the notation introduced in Section \ref{subcritico}, with $X_p^s=W\oplus\textup{span}\{\varphi_1\}$ and $Q=(B_R\cap \textrm{span}\{\varphi_1\})\oplus([0,R\varphi])$ for $\varphi\in W$. So, it suffices to prove that 
\begin{enumerate}
\item[$(iii)$] $\displaystyle\sup_{u\in Q}I_{\lambda,p}(u)<\displaystyle\frac{s}{N}\left(\frac{\alpha_{s,N}^*}{\alpha_0}\right)^{\frac{N-s}{s}}$.
\end{enumerate}

We claim that $(f_{7,p})$ implies
\begin{equation}\label{C4PE_19}
\max_{u\in\bar{Q}}\;I_{\lambda,p}(u)<\displaystyle\frac{s}{N}\left(\frac{\alpha_{s,N}^*}{\alpha_0}\right)^{\frac{N-s}{s}},
\end{equation}
for $\lambda>0$ small enough.

So, we write $I_{\lambda,p}$ in the form $$I_{\lambda,p}(u)=J(u)+\dfrac{\lambda}{q}\displaystyle\int_{\Omega}\vert u\vert^q\dd x,$$ with $J(u)=\dfrac{1}{p}\|u\|_{X_p^s}^p-\dfrac{a}{p}\int_{\Omega}\vert u\vert^p\dd x-\int_{\Omega}F(u)\dd x$.

In order to prove ($iii$), it is enough to verify that
\begin{equation}
\label{ineqcrit}\sup_{u\in\bar{Q}}J(u)<\displaystyle\frac{s}{N}\left(\frac{\alpha_{s,N}^*}{\alpha_0}\right)^{\frac{N-s}{s}}
\end{equation}
or, what is the same, that for $\lambda>0$ small enough, we have
$$\displaystyle\sup_{u\in\overline{ Q}}I_{\lambda,p}(u)\leq\displaystyle\sup_{u\in\overline{ Q}}J(u)+\dfrac{\lambda}{q}\displaystyle\sup_{u\in\bar{Q}}{\vert u\vert_{L^q(\Omega)}^q}<\displaystyle\frac{s}{N}\left(\frac{\alpha_{s,N}^*}{\alpha_0}\right)^{\frac{N-s}{s}},$$ thus showing ($iii$).

In order to prove \eqref{ineqcrit}, we will show that 
$$\displaystyle\sup_{u\in\bar{Q}}J(u)<\displaystyle\frac{s}{N}\left(\frac{\alpha_{s,N}^*}{\alpha_0}\right)^{\frac{N-s}{s}}.$$ 
 
Consider $\mathbb{F}=\textrm{span}\{ \varphi_1,\varphi\}$. We have 
\begin{align*}
\displaystyle\sup_{u\in\bar{Q}}J(u)\leq \displaystyle\max_{u\in\mathbb{F}}J(u)
=\displaystyle\max_{u\in\mathbb{F}, t\neq 0}J\left(\vert t\vert\frac{u}{\vert t\vert}\right)
=\displaystyle\max_{u\in\mathbb{F}, t> 0}J(tu)\leq\displaystyle\max_{u\in\mathbb{F}, t\geq 0}J(tu). 
\end{align*} 

But
\begin{align*}
J(tu)&=\displaystyle\frac{t^p}{p}\|u\|_{X_p^s}^p-\frac{a}{p}t^p\int_{\Omega}\vert u\vert^p\dd x-\int_\Omega F(tu )\dd x\nonumber\\
&\leq \displaystyle\frac{t^p}{p}\|u\|_{X_p^s}^p-\int_{\Omega} F(tu)\dd x.
\end{align*}

Define $\eta\colon[0,+\infty)\to\mathbb{R}$ by 
$$\eta(t)=\displaystyle\frac{t^p}{p}\|u\|_{X_p^s}^p-\int_{\Omega} F(tu)\dd x.$$

Since all norms in $\mathbb{F}$ are equivalent, it follows from $(f_{7,p})$ the existence of $C>0$ such that
\begin{align*}
\int_{\Omega} F(tu)\dd x \geq& \dfrac{C_r}{r}\int_{\Omega}t^r\vert u\vert^r\dd x
=\dfrac{C_r}{r}t^r\|u\|_{L^r(\Omega)}^r\geq C\dfrac{C_r}{r} t^r\|u\|_{X_p^s}^{r}.
\end{align*}

Thus,
\begin{equation}\label{C4_21.00}
\eta(t)\leq \displaystyle\frac{t^p}{p}\|u\|_{X_p^s}^p-C\dfrac{C_r}{r} t^r\|u\|_{X_p^s}^{r}
\leq\displaystyle{\max_{t\geq 0}}\left(\displaystyle\frac{t^p}{p}\|u\|_{X_p^s}^p -C\dfrac{C_r}{r} t^r\|u\|_{X_p^s}^{r} \right).
\end{equation}

If we denote  $g(t):=\displaystyle\frac{t^p}{p}\|u\|_{X_p^s}^p -C\dfrac{C_r}{r} \|u\|_{X_p^s}^{r}  t^r$, it is easy to prove that $g$ attains its (global) maximum at 
$t_1=\displaystyle\left(\frac{\|u\|_{X}^{p-r}}{CC_r}\right)^\frac{1}{r-p}$ and 
$$\displaystyle{\max_{t\geq 0}}\; g(t)=g(t_1)=\left(\frac{1}{CC_r}\right)^{\frac{p}{r-p}} \left(\frac{r-p}{pr}\right)> 0.$$
 
Thus, it follows from $(f_{7,p})$ that
\begin{align*}
\max_{t\geq 0}g(t)
&<\frac{1}{2} \frac{s}{N}\left(\frac{\alpha_{s,N}^*}{\alpha_0}\right)^{\frac{N-s}{s}}\left(\frac{pr}{r-p}\right)\left(\frac{r-p}{pr}\right)
=\frac{1}{2} \frac{s}{N}\left(\frac{\alpha_{s,N}^*}{\alpha_0}\right)^{\frac{N-s}{s}}.
\end{align*}

Therefore, \eqref{C4_21.00} yields $\eta(t)<\frac{1}{2}\dfrac{s}{N}\left(\frac{\alpha_{s,N}^*}{\alpha_0}\right)^{\frac{N-s}{s}}$ and 
$$\displaystyle{\max_{u\in\mathbb{F},\; t\geq 0}}J(tu)\leq \displaystyle{\max_{t\geq 0}}\,\eta(t)\leq \frac{1}{2}  \frac{s}{N}\left(\frac{\alpha_{s,N}^*}{\alpha_0}\right)^{\frac{N-s}{s}}<\frac{s}{N}\left(\frac{\alpha_{s,N}^*}{\alpha_0}\right)^{\frac{N-s}{s}},$$ 
and the proof of our claim is complete.

We will now show that, for $\lambda>0$ small enough, the functional $I_{\lambda,p}$ satisfies the  $(PS)$-condition at the level $$C_\lambda=\inf_{h\in\Gamma}\sup_{u\in\bar{ Q}}I_{\lambda,p}(h(u)),$$
where $\Gamma=\{h\in C(\bar{Q},X_p^s)\;;\;h=id\textrm{ in }\partial Q\}.$ 

In fact, \eqref{C4PE_19} implies that, for $\lambda>0$ small enough, we have   
$$\displaystyle\sup_{u\in\bar{Q}}I_{\lambda,p}(u)< \frac{s}{N}\left(\frac{\alpha_{s,N}^*}{\alpha_0}\right)^{\frac{N-s}{s}}.$$

Thus,
$$\displaystyle\inf_{h\in\Gamma}\displaystyle\sup_{u\in\bar{Q}}I_{\lambda,p}(h(u))\leq\displaystyle\sup_{u\in\bar{Q}}I_{\lambda,p}(u)<\frac{s}{N}\left(\frac{\alpha_{s,N}^*}{\alpha_0}\right)^{\frac{N-s}{s}}$$
and the $(PS)_{C_\lambda}$-condition is consequence of Lemma \ref{C4PE_3}.

It follows from the Linking Theorem that  $C_\lambda=\displaystyle\inf_{h\in\Gamma}\displaystyle\sup_{u\in\bar{Q}}I_{\lambda,p}(h(u))$ 
is a critical value for $I_{\lambda,p}$, with $C_\lambda\geq\beta$. Therefore, there exists $u_\lambda\in X_p^s$ weak solution of \eqref{principal2} satisfying $0<\beta\leq I_{\lambda,p}(u_\lambda)$, what implies that $u_\lambda\neq 0$.

As in the proof of Proposition \ref{proplinking}, we prove that this solution is different from the positive and negative solutions already obtained. 
$\hfill\Box$\end{proof} 

Observe that we also conclude the proof of Theorem \ref{4} by the same reasoning given in the proof of Theorem \ref{3}.
\section{Proof of theorems \ref{5} and \ref{6}} 
The proofs of Theorems \ref{5} and \ref{6} follow the same arguments presented in the proofs of Theorems  \ref{3} and \ref{4}, respectively; in order to find a third solution by applying the Linking Theorem we consider the decomposition
\[X=V_k\oplus W_k,\] where $V_k=\textrm{span}\{\varphi_1,\ldots,\varphi_k\}$ is the subspace generated by the autofunctions of $(-\Delta)^{1/2}$ corresponding to the eigenvalues $\lambda_1,\ldots,\lambda_k$, e $W_k=V_k^\perp$. 

\vspace{0.5cm}
\noindent
{\bf Acknowledgment}  The authors would like to thank A. Iannizzotto for fruitful discussions with respect to Theorem \ref{0}.

\end{document}